\documentclass[11pt,a4paper]{article}
\usepackage{adjustbox}
\usepackage{aligned-overset}
\usepackage{amsmath,amsthm}
\usepackage{authblk}
\usepackage[style=numeric-comp,maxbibnames=99,bibencoding=utf8,firstinits=true]{biblatex}
\usepackage{booktabs}
\usepackage{cancel}
\usepackage[hmargin={26mm,26mm},vmargin={30mm,35mm}]{geometry}
\usepackage{hyperref}
\usepackage{multirow}
\usepackage{rotating}
\usepackage{subcaption}
\usepackage{xcolor}

\usepackage{newtxtext}
\usepackage{newtxmath}

%------------------------------------------------------------------------------%
% Path for figures
%------------------------------------------------------------------------------%

\graphicspath{{./figures/}}

%------------------------------------------------------------------------------%
% Use BibLaTeX for bibliography
%------------------------------------------------------------------------------%

\bibliography{hypre}

%------------------------------------------------------------------------------%
% Author block
%------------------------------------------------------------------------------%

\newcommand{\email}[1]{\href{mailto:#1}{#1}}

%------------------------------------------------------------------------------%
% Document-specific macros
%------------------------------------------------------------------------------%

%% \usepackage[normalem]{ulem}
%% \normalem
%% \newcounter{corr}
%% \definecolor{violet}{rgb}{0.580,0.,0.827}
%% \newcommand{\corr}[3]{\typeout{Warning : a correction remains in page \thepage}
%%   \stepcounter{corr}        
%%  	            {\color{blue}\ifmmode\text{\,\sout{\ensuremath{#1}}\,}\else\sout{#1}\fi}
%%               {\color{red}#2}
%%               {\color{violet} #3}
%% }

% Theorem environments
\theoremstyle{plain}
\newtheorem{theorem}{Theorem}
\newtheorem{proposition}[theorem]{Proposition}
\newtheorem{lemma}[theorem]{Lemma}

\theoremstyle{remark}
\newtheorem{remark}[theorem]{Remark}

\theoremstyle{definition}
\newtheorem{assumption}{Assumption}

\theoremstyle{example}

% Trace operator

\DeclareMathOperator{\tr}{tr}
\DeclareMathOperator{\curl}{curl}
\DeclareMathOperator{\rot}{rot}

%------------------------------------------------------------------------------%
%------------------------------------------------------------------------------%

\begin{document}

\title{Stability, convergence, and pressure-robustness of numerical schemes for incompressible flows with hybrid velocity and pressure}
\author[1]{Lorenzo Botti}
\author[2]{Michele Botti}
\author[3]{Daniele A.~Di Pietro}
\author[1]{Francesco Carlo Massa}
\affil[1]{Dipartimento di Ingegneria e Scienze Applicate, Università degli Studi di Bergamo, viale Marconi 5, 24044 Dalmine, Bergamo, Italy, \email{lorenzo.botti@unibg.it}, \email{francescocarlo.massa@unibg.it}}
\affil[2]{MOX-Dipartimento di Matematica, Politecnico di Milano, Piazza Leonardo da Vinci 32, Milan, Italy, \email{michele.botti@polimi.it}}
\affil[3]{IMAG, Univ. Montpellier, CNRS, Montpellier, France, \email{daniele.di-pietro@umontpellier.fr}}

\maketitle

\begin{abstract}
  In this work we study the stability, convergence, and pressure-robustness of discretization methods for incompressible flows with hybrid velocity and pressure.
  Specifically, focusing on the Stokes problem, we identify a set of assumptions that yield inf-sup stability as well as error estimates which distinguish the velocity- and pressure-related contributions to the error.
  We additionally identify the key properties under which the pressure-related contributions vanish in the estimate of the velocity, thus leading to pressure-robustness.
  Several examples of existing and new schemes that fit into the framework are provided, and extensive numerical validation of the theoretical properties is provided.%
  \medskip\\
  \textbf{Key words:} Hybrid approximation methods, %
  Hybrid-High Order methods, %
  Hybridizable Discontinuous Galerkin methods, %
  Inf-sup stability, %
  Presure-robustness
  \smallskip\\
  \textbf{MSC 2010:} 65N30, %% Finite element, Rayleigh-Ritz and Galerkin methods for boundary value problems involving PDEs
  65N12, %% Stability and convergence of numerical methods for boundary value problems involving PDEs
  35Q30, %% Navier-Stokes equations 
  76D07  %% Stokes and related (Oseen, etc.) flows
\end{abstract}

%% \tableofcontents

%------------------------------------------------------------------------------%

\section{Introduction}

In this paper we study the stability, convergence, and pressure-robustness of discretization methods for incompressible flows with hybrid velocity and pressure.
Specifically, focusing on the Stokes problem, we identify a set of assumptions that yield (standard or generalized) inf-sup stability as well as error estimates which distinguish the velocity- and pressure-related contributions to the error.
We additionally identify the key properties under which the pressure-related contributions vanish in the estimate of the velocity, thus leading to pressure-robustness in the sense of \cite{Linke:14}. Several examples of existing and new schemes that fit into the framework are provided.

The use of hybrid approximations of the velocity and discontinuous approximations of the pressure in the context of finite element approximations of incompressible flows dates back to the seminal contribution of Crouzeix and Raviart \cite{Crouzeix.Raviart:73}.
Combined with ideas originating from Discontinuous Galerkin methods, this approach later gave rise to a variety of methods \cite{Brenner:14}, including Hybridizable Discontinuous Galerkin methods \cite{Lehrenfeld.Schoberl:16,Cockburn.Gopalakrishnan:09,Cockburn.Gopalakrishnan.ea:11,Cesmelioglu.Cockburn.ea:17}, Hybrid High-Order (HHO) methods \cite{Aghili.Boyaval.ea:15,Di-Pietro.Ern.ea:16*1,Botti.Di-Pietro.ea:18,Di-Pietro.Droniou:23*2} and nonconforming Virtual Element methods \cite{Ayuso-de-Dios.Lipnikov.ea:16,Zhao.Zhang.ea:20}.
More recently, some authors have pointed out the interest of combining hybrid approximations of both the velocity and pressure \cite{Rhebergen.Wells:18,Kirk.Rhebergen:19,Botti.Massa:22,Baier.Rhebergen.ea:21}, as this can lead to methods that yield $H(\operatorname{div})$-conforming approximations of the velocity and possibly exhibit a better behaviour in the quasi-inviscid limit.

The goal of the present work is precisely to provide a rigorous framework of analysis for such methods focusing on the Stokes problem.
To this purpose, we use as a starting point a fully discrete presentation closely inspired by HHO methods and the Third Strang Lemma of \cite{Di-Pietro.Droniou:18}.
The discrete pressure-velocity coupling hinges on local reconstructions of the velocity divergence and the pressure gradient obtained mimicking integration by parts formulas.
The key to the discretization of the viscous term are, on the other hand, a velocity gradient and a stabilization bilinear form.
The description of the scheme is completed by prescribing a velocity interpolator at elements.
Starting from an abstract scheme based on the above ingredients, we identify inclusion relations of the local pressure spaces into the local velocity spaces that guarantee stability in the form of a generalized inf-sup condition.
A standard inf-sup condition is recovered when the discrete velocity divergence and pressure gradient satisfy a global discrete integration by parts formula for interpolates of smooth velocity fields.
Under proper choices of discrete spaces and operators, we derive error estimates that distinguish the velocity- and pressure-contributions to the error.
Such error estimates turn out to be pressure-robust when standard inf-sup stability holds, as well as suitable inclusion relations between the local velocity and pressure spaces.
This abstract framework is applied to derive new pressure-robust error estimates for the classical Botti--Massa scheme of \cite{Botti.Massa:22}.
A similar analysis of the Rhebergen--Wells method of \cite{Rhebergen.Wells:18} is also carried out, providing alternative proofs of the findings of \cite{Kirk.Rhebergen:19}.
Finally, new schemes are also identified and analyzed, both on standard and polyhedral meshes, and extensive numerical validation of the theoretical properties is provided.

The importance of deriving velocity error estimates independent of the pressure has been pointed out in several works; see, e.g., \cite{Linke.Merdon:16, Lederer.Linke.ea:17,Kreuzer.Zanotti:20}. A classical strategy for achieving pressure-robustness consists in using stable mixed methods with $H(\operatorname{div})$-conforming and divergence-free approximate velocities \cite{John.Linke.ea:17}. This strategy has been, in particular, pursued in the context of Discontinuous Galerkin methods \cite{Cockburn.Kanshat.ea:07,Wang.Ye:07,Kreuzer.Verfurth.ea:21}.
Recently, it has also been suggested in \cite{Kreuzer.Zanotti:20,Kreuzer.Verfurth.ea:21} that pressure-robust techniques can be used to remedy the suboptimal approximation results for non-Newtonian Stokes problems \cite{Belenki.Berselli.ea:12,Botti.Castanon-Quiroz.ea:21}.
Pressure-robust methods supporting general polyhedral meshes and based on local $H(\operatorname{div})$-conforming reconstructions on simplicial submeshes have been explored, e.g., in \cite{Frerichs.Merdon:22,Castanon-Quiroz.Di-Pietro:24}.
Recent works have also pointed out that pressure-robustness on polytopal meshes can be achieved without using a submesh when compatible approximation of the curl-curl formulation of the Navier--Stokes equations are used \cite{Beirao-da-Veiga.Dassi.ea:22,Di-Pietro.Droniou.ea:24}.

The rest of this paper is organized as follows.
In Section~\ref{sec:hybrid.spaces} we set the stage of the paper by introducing hybrid spaces, interpolators, and reconstructions of quantities in terms of the hybrid unknowns.
In Section~\ref{sec:stokes} we state an abstract discretization method for the Stokes problem and prove (generalized) inf-sup stability of the pressure-velocity coupling.
A full stability and convergence analysis of the scheme is provided in Section~\ref{sec:analysis}.
Applications of the framework to existing and new schemes are considered in Section~\ref{sec:applications}.
Finally, a numerical assessment of the predicted convergence rates and pressure-robustness properties is provided in Section~\ref{sec:numerical.examples}.

%------------------------------------------------------------------------------%

\section{Hybrid spaces, interpolators, and reconstructions}\label{sec:hybrid.spaces}

\subsection{Mesh and notation for inequalities}

Let $\Omega \subset \mathbb{R}^d$, $d \ge 1$, denote an open bounded polytopal domain.
Denote by $\mathcal{M}_h = (\mathcal{T}_h, \mathcal{F}_h)$ a mesh of the domain $\Omega$ in the sense of \cite[Chapter~1]{Di-Pietro.Droniou:20}, with $\mathcal{T}_h$ collecting the mesh elements and $\mathcal{F}_h$ the mesh faces.
Notice that the term faces refers to (hyper-)planar portions of the element boundaries, i.e., faces when $d = 3$, edges when $d = 2$.
Additional assumptions on the shape of the elements may be needed for specific methods in Section~\ref{sec:applications}.
The set of faces is partitioned into the sets $\mathcal{F}_h^{\rm i}$ of internal faces and $\mathcal{F}_h^{\rm b}$ of boundary faces.
For each mesh element $T \in \mathcal{T}_h$, we denote by $\mathcal{F}_T$ the set of faces that lie on its boundary $\partial T$ and, for any $F \in \mathcal{F}_T$, we denote by $n_{TF}$ the unit vector normal to $F$ pointing out of $T$.
For each internal face $F \in \mathcal{F}_h^{\rm i}$, we additionally fix once and for all an orientation through the unit normal vector $n_F$.
For boundary faces, we select $n_F$ pointing out of $\Omega$.

When stating convergence results, we assume that $\mathcal{M}_h$ belongs to a sequence of refined meshes indexed by the mesh size $h$.
Inequalities that hold up to a multiplicative constant independent of $h$ are denoted with the $\lesssim$ sign, and we furthermore write $a \simeq b$ in place of ``$a \lesssim b$ and $b \lesssim a$''.

\subsection{Hybrid velocity and pressure spaces}

We consider methods where the velocity and pressure are both approximated using hybrid spaces, i.e., spaces spanned by vectors of local functions attached to both element and faces:
\[
\underline{U}_h \coloneq
\left(
\bigtimes_{T \in \mathcal{T}_h} U_T
\right) \times \left(
\bigtimes_{F \in \mathcal{F}_h} U_F
\right),
\qquad
\underline{P}_h \coloneq
\left(
\bigtimes_{T \in \mathcal{T}_h} P_T
\right) \times \left(
\bigtimes_{F \in \mathcal{F}_h} P_F
\right),
\]
where $U_T \subset L^2(T)^d$ and $P_T \subset L^2(T)$ for all $T \in \mathcal{T}_h$ with $P_T$ containing at least constant functions on $T$,
while $U_F \subset L^2(F)^d$ and $P_F \subset L^2(F)$ for all $F \in \mathcal{F}_h$.
Following standard conventions in the framework of fully discrete methods, we denote the restrictions of the above spaces and their elements to a mesh element $T \in \mathcal{T}_h$ by replacing the subscript $h$ with $T$.

\subsection{Interpolators and projectors}

We denote by $I_{U,T} : H^1(T)^d \to U_T$ the interpolator onto $U_T$ and by $I_{U,F} : L^2(F)^d \to U_F$ the $L^2$-orthogonal projector onto $U_F$, and set, for all $v \in H^1(\Omega)^3$,
\[
\underline{I}_{U,h} v
\coloneq \big(
(I_{U,T} v)_{T \in \mathcal{T}_h}, (I_{U,F} v)_{F \in \mathcal{F}_h}
\big).
\]
Similarly, we denote by $I_{P,Y}:L^2(Y) \to P_Y$, $Y \in \mathcal{T}_h \cup \mathcal{F}_h$, the $L^2$-orthogonal projector onto $P_Y$ and set, for all $q \in H^1(\Omega)$,
\[
\underline{I}_{P,h} q
\coloneq \big(
(I_{P,T} q)_{T \in \mathcal{T}_h}, (I_{P,F} q)_{F \in \mathcal{F}_h}
\big).
\]
Notice that, while $I_{U,F}$, $I_{P,T}$, and $I_{P,F}$ are all $L^2$-orthogonal projectors, we leave more freedom for $I_{U,T}$, as it will be needed in the analysis of certain methods in Section~\ref{sec:applications}.
Specifically, we only make the following continuity requirement:
For all $v \in H_0^1(\Omega)^d$,
\begin{equation}\label{eq:IUh:continuity}
  \| \underline{I}_{U,h} v \|_{1,h} \lesssim \| v \|_{H^1(\Omega)^d},
\end{equation}
where the $H^1$-like seminorm $\| \cdot \|_{1,h}$ on $\underline{U}_h$ is such that, for all $\underline{v}_h \in \underline{U}_h$,
\begin{equation}\label{eq:norm.1h}
  \begin{gathered}
    \| \underline{v}_h \|_{1,h}^2
    \coloneq \sum_{T \in \mathcal{T}_h} \| \underline{v}_T \|_{1,T}^2
    \\
    \text{%
    with
    $\| \underline{v}_T \|_{1,T}^2
    \coloneq \| \nabla v_T \|_{L^2(T)^{d\times d}}^2
    + h_T^{-1} \sum_{F \in \mathcal{F}_T} \| v_F - v_T \|_{L^2(F)^d}^2$
    for all $T \in \mathcal{T}_h$.      
    }
    \end{gathered}
\end{equation}
The $L^2$-orthogonal projector on a space $X$ different from the component spaces will be denoted by $\pi_X$.

\subsection{Discrete velocity divergence}

Let the discrete velocity divergence $D_T : \underline{U}_T \to P_T$ be such that, for all $\underline{v}_T \in \underline{U}_T$,
\begin{equation}\label{eq:DT}
  \int_T D_T \underline{v}_T\,q_T
  = -\int_T v_T \cdot \nabla q_T
  + \sum_{F \in \mathcal{F}_T} \int_F (v_F \cdot n_{TF})\, q_T
  \qquad\forall q_T \in P_T.
\end{equation}

\begin{assumption}\label{ass:PT.vs.UT}
  For all $T \in \mathcal{T}_h$, it holds
  \begin{enumerate}
  \item $\nabla P_T \subset U_T$ and $\pi_{\nabla P_T} \circ I_{U,T} = \pi_{\nabla P_T}$ with $\pi_{\nabla P_T}$ $L^2$-orthogonal projector onto $\nabla P_T$;
  \item $\tr_F P_T \subset U_F \cdot n_{TF}$ for all $F \in \mathcal{F}_T$.
  \end{enumerate}
\end{assumption}

\begin{proposition}[Commutativity of $D_T$]\label{prop:DT:commutativity}
  Under Assumption~\ref{ass:PT.vs.UT} we have, for all $v \in H^1(T)^d$,
  \begin{equation}\label{eq:DT:commutativity}
    D_T \underline{I}_{U,T} v = I_{P,T} ( \nabla \cdot v ).
  \end{equation}
\end{proposition}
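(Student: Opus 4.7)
The plan is to test the defining relation \eqref{eq:DT} against an arbitrary $q_T\in P_T$ with the argument $\underline{I}_{U,T}v$, and show that the right-hand side matches the expression obtained by integrating $\int_T(\nabla\cdot v)\,q_T$ by parts. Since the result of $D_T$ lies in $P_T$, and $I_{P,T}$ is the $L^2$-orthogonal projector onto $P_T$, matching $\int_T D_T\underline{I}_{U,T}v\,q_T = \int_T (\nabla\cdot v)\,q_T$ for all $q_T\in P_T$ is equivalent to the claimed identity.

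Concretely, first I would substitute $\underline{v}_T = \underline{I}_{U,T}v$ into \eqref{eq:DT} to get
\[
\int_T D_T\underline{I}_{U,T}v\,q_T = -\int_T (I_{U,T}v)\cdot\nabla q_T + \sum_{F\in\mathcal{F}_T}\int_F (I_{U,F}v\cdot n_{TF})\,q_T,
\]
and then apply the classical integration by parts formula
\[
\int_T(\nabla\cdot v)\,q_T = -\int_T v\cdot\nabla q_T + \sum_{F\in\mathcal{F}_T}\int_F (v\cdot n_{TF})\,q_T.
\]
The proof then reduces to showing that, for every $q_T\in P_T$, we may replace $I_{U,T}v$ by $v$ in the volume integral and $I_{U,F}v$ by $v$ in each face integral.

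For the volume term, I would invoke Assumption~\ref{ass:PT.vs.UT}.1: since $\nabla q_T\in\nabla P_T\subset U_T$ and $\pi_{\nabla P_T}\circ I_{U,T} = \pi_{\nabla P_T}$, we have
\[
\int_T (I_{U,T}v)\cdot\nabla q_T = \int_T \pi_{\nabla P_T}(I_{U,T}v)\cdot\nabla q_T = \int_T \pi_{\nabla P_T}(v)\cdot\nabla q_T = \int_T v\cdot\nabla q_T.
\]
For each face term, I would use Assumption~\ref{ass:PT.vs.UT}.2: the trace $q_T|_F$ can be written as $w_F\cdot n_{TF}$ for some $w_F\in U_F$, so that $q_T\,n_{TF}$ lies in the normal part of $U_F$; then the $L^2$-orthogonality of $I_{U,F}$ onto $U_F$ yields $\int_F (v-I_{U,F}v)\cdot n_{TF}\,q_T = 0$. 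Summing the volume and face contributions and invoking the arbitrariness of $q_T\in P_T$ together with the $L^2$-orthogonality of $I_{P,T}$ concludes the proof.

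The main obstacle is the face term: the statement $\tr_F P_T\subset U_F\cdot n_{TF}$ must be used to secure that $q_T\,n_{TF}$ is testable against $v-I_{U,F}v$ via the $L^2$-orthogonality of $I_{U,F}$. In practice this works because the relevant $U_F$ spaces in Section~\ref{sec:applications} are stable under normal extraction, but in the abstract proof this point deserves care—one should explicitly exhibit the element of $U_F$ against which orthogonality is applied, rather than invoking $L^2$-orthogonality of $I_{U,F}$ against a purely scalar test.
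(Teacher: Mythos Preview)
Your proposal is correct and follows exactly the paper's approach: substitute $\underline{I}_{U,T}v$ into \eqref{eq:DT}, use Assumption~\ref{ass:PT.vs.UT} to strip the interpolators, and integrate by parts. The subtlety you flag on the face term is real but is equally glossed over in the paper's one-line proof; it resolves because in every instance $U_F=\mathcal{P}^k(F)^d$ and $n_{TF}$ is constant on $F$, so $\tr_F P_T\subset U_F\cdot n_{TF}=\mathcal{P}^k(F)$ immediately gives $q_T\,n_{TF}\in\mathcal{P}^k(F)^d=U_F$, which is the element against which you apply the $L^2$-orthogonality of $I_{U,F}$.
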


\begin{proof}
  Apply the definition \eqref{eq:DT} of $D_T$ to $\underline{v}_T = \underline{I}_{U,T} v$, notice that, by Assumption~\ref{ass:PT.vs.UT}, $I_{U,T}$ and $I_{U,F}$ can be cancelled from the right-hand side, and integrate the latter by parts.
\end{proof}

\subsection{Discrete pressure gradient}

We next define the pressure gradient $G_T : \underline{P}_T \to U_T$ such that, for all $\underline{q}_T \in \underline{P}_T$,
\begin{equation}\label{eq:GT}
  \int_T G_T \underline{q}_T \cdot v_T
  = - \int_T q_T\, (\nabla \cdot v_T)
  + \sum_{F \in \mathcal{F}_T} \int_F q_F\, (v_T \cdot n_{TF})
  \qquad\forall v_T \in U_T.
\end{equation}

\begin{assumption}\label{ass:UT.vs.PT}
  For all $T \in \mathcal{T}_h$, it holds
  \begin{enumerate}
  \item $\nabla \cdot U_T \subset P_T$;
  \item $U_F \cdot n_{TF} \subset P_F$ for all $F \in \mathcal{F}_T$.
  \end{enumerate}
\end{assumption}

The proof of the following result is analogous to that of Proposition~\ref{prop:DT:commutativity}.

\begin{proposition}[Commutativity of $G_T$]\label{prop:GT:commutativity}
  Under Assumption~\ref{ass:UT.vs.PT} we have, for all $T \in \mathcal{T}_h$ and all $q \in H^1(T)$,
  \begin{equation}\label{eq:GT:commutativity}
    G_T \underline{I}_{P,T} q = \pi_{U_T} (\nabla q).
  \end{equation}
\end{proposition}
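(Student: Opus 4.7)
The plan is to mimic exactly the proof of Proposition~\ref{prop:DT:commutativity}: plug $\underline{q}_T = \underline{I}_{P,T} q$ into the defining relation \eqref{eq:GT}, use Assumption~\ref{ass:UT.vs.PT} to cancel the two $L^2$-projectors appearing on the right-hand side, integrate by parts in $T$, and finally conclude by the defining property of $\pi_{U_T}$. In a sense, the argument is the formal transpose of the one for $D_T$, with the continuous and discrete sides of the integration-by-parts formula swapped.

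Concretely, testing \eqref{eq:GT} with an arbitrary $v_T \in U_T$ gives
\[
\int_T G_T \underline{I}_{P,T} q \cdot v_T
= -\int_T (I_{P,T} q)\,(\nabla \cdot v_T)
+ \sum_{F \in \mathcal{F}_T} \int_F (I_{P,F} q)\,(v_T \cdot n_{TF}).
\]
Assumption~\ref{ass:UT.vs.PT}(i) yields $\nabla \cdot v_T \in P_T$, so the $L^2$-orthogonality of $I_{P,T}$ allows one to remove $I_{P,T}$ from $q$ in the volume term. Similarly, Assumption~\ref{ass:UT.vs.PT}(ii) ensures that $v_T \cdot n_{TF}$ belongs to $P_F$ on each face $F \in \mathcal{F}_T$, so that the $L^2$-orthogonality of $I_{P,F}$ allows one to drop $I_{P,F}$ from $q$ in the boundary term. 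Integrating by parts then rewrites the right-hand side as $\int_T \nabla q \cdot v_T$, and, since $v_T \in U_T$, the definition of $\pi_{U_T}$ gives $\int_T \nabla q \cdot v_T = \int_T \pi_{U_T}(\nabla q) \cdot v_T$. The identity $\int_T \big( G_T \underline{I}_{P,T} q - \pi_{U_T}(\nabla q) \big) \cdot v_T = 0$ for every $v_T \in U_T$, combined with $G_T \underline{I}_{P,T} q - \pi_{U_T}(\nabla q) \in U_T$, then yields \eqref{eq:GT:commutativity}.

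I do not anticipate any real obstacle: the proof is essentially a two-step exercise (project-cancellation followed by integration by parts). The only mild care required concerns the interpretation of Assumption~\ref{ass:UT.vs.PT}(ii), which must be used to guarantee $v_T \cdot n_{TF} \in P_F$ for the face trace of an element-valued $v_T \in U_T$; in the concrete frameworks considered in Section~\ref{sec:applications} this is automatic since the face trace of $U_T$ is contained in $U_F$.
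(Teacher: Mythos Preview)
Your proof is correct and is exactly the argument the paper alludes to when it says the proof is analogous to that of Proposition~\ref{prop:DT:commutativity}. One minor correction to your closing remark: the full face trace of $U_T$ is \emph{not} generally contained in $U_F$ in the examples (e.g., for the Botti--Massa method, $\tr_F\mathcal{P}^{k+1}(T)^d=\mathcal{P}^{k+1}(F)^d\not\subset\mathcal{P}^k(F)^d=U_F$); what the argument actually needs, and what the paper in fact verifies in Section~\ref{sec:applications}, is the direct inclusion $(\tr_F U_T)\cdot n_{TF}\subset P_F$.
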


For future use we note the following formula, which establishes a link between the discrete velocity divergence and pressure gradient:
For all $(\underline{v}_T, \underline{q}_T) \in \underline{U}_T \times \underline{P}_T$,
\begin{equation}\label{eq:ibp}
  \begin{aligned}
    \int_T G_T \underline{q}_T \cdot v_T
    &= \int_T \nabla q_T \cdot v_T
    + \sum_{F \in \mathcal{F}_T}\int_F (q_F - q_T) (v_T \cdot n_{TF})
    \\
    \overset{\eqref{eq:DT}}&= -\int_T D_T \underline{v}_T\,q_T
    + \sum_{F \in \mathcal{F}_T} \int_F (v_T - v_F) \cdot n_{TF}\, (q_F - q_T),
  \end{aligned}
\end{equation}
where the equality in the first line is obtained integrating by parts the right-hand side of \eqref{eq:GT}.

%------------------------------------------------------------------------------%

\section{Hybrid discretizations of the Stokes problem}\label{sec:stokes}

In this section we formulate an abstract approximation scheme for the Stokes problem:
Given $f : \Omega \to \mathbb{R}^d$, find $u : \Omega \to \mathbb{R}^d$ and $p: \Omega \to \mathbb{R}$ such that
\begin{equation}\label{eq:stokes}
  \begin{alignedat}{2}
    - \nu \Delta u + \nabla p &= f &\qquad& \text{in $\Omega$},
    \\
    \nabla \cdot u &= 0 &\qquad& \text{in $\Omega$},
    \\
    u &= 0 &\qquad& \text{on $\partial \Omega$},
    \\
    \int_\Omega p &= 0,
  \end{alignedat}
\end{equation}
where $\nu > 0$ denotes the kinematic viscosity.
Throughout the rest of this work we assume $f \in L^2(\Omega)^d$ and focus on the standard weak formulation of problem \eqref{eq:stokes} with velocity $u \in H^1_0(\Omega)^d$ and pressure $p \in L^2_0(\Omega) \coloneq \left\{ q \in L^2(\Omega)\,:\, \int_\Omega q = 0\right\}$.
The velocity approximation will be sought in the following subspace of $\underline{U}_h$ incorporating homogeneous Dirichlet boundary conditions:
\[
\underline{U}_{h,0} \coloneq \left\{
\big( (v_T)_{T \in \mathcal{T}_h}, (v_F)_{F \in \mathcal{F}_h} \big)
\in \underline{U}_h \,:\, \text{$v_F = 0$ for all $F \in \mathcal{F}_h^{\rm b}$}
\right\}.
\]
To define the discrete space for the pressure, for all $\underline{q}_h \in \underline{P}_h$ we let $q_h \in L^2(\Omega)$ (not underlined) be such that $(q_h)_{|T} \coloneq q_T$ for all $T \in \mathcal{T}_h$ and set
\[
\underline{P}_{h,0} \coloneq \left\{ \underline{q}_h \in \underline{P}_h \,:\, \int_\Omega q_h = 0\right\}.
\]

\subsection{Pressure-velocity coupling}

Let $b_h : \underline{U}_h \times \underline{P}_h \to \mathbb{R}$ be such that, for all $(\underline{v}_h, \underline{q}_h) \in \underline{U}_h \times \underline{P}_h$,
\begin{equation}\label{eq:bh}
  \text{%
    $b_h(\underline{v}_h, \underline{q}_h)
    \coloneq \sum_{T \in \mathcal{T}_h} b_T(\underline{v}_T, \underline{q}_T)$
    with 
    $b_T(\underline{v}_T, \underline{q}_T) \coloneq \int_T G_T \underline{q}_T \cdot v_T$.
  }
\end{equation}
Notice that, by \eqref{eq:ibp}, the following equivalent reformulation of $b_h$ holds:
\begin{equation}\label{eq:bh:bis}
  b_h(\underline{v}_h, \underline{q}_h)
  = -\sum_{T \in \mathcal{T}_h} \int_T D_T \underline{v}_T\, q_T
  + \sum_{T \in \mathcal{T}_h} \sum_{F \in \mathcal{F}_T} \int_F (v_T - v_F) \cdot n_{TF}\,(q_F - q_T).
\end{equation}

Let us now consider the stability of the pressure-velocity coupling.
To this purpose, we define the boundary pressure seminorm $|\cdot|_{0,h} : \underline{P}_h \to \mathbb{R}$ such that, for all $\underline{q}_h \in \underline{P}_h$,
\begin{equation}\label{eq:seminorm.0h}
  \text{%
    $| \underline{q}_h |_{0,h}^2
    \coloneq \sum_{T \in \mathcal{T}_h} | \underline{q}_T |_{0,T}^2$
    with
    $| \underline{q}_T |_{0,T}^2
    \coloneq h_T \sum_{F \in \mathcal{F}_T} \| q_F - q_T \|_{L^2(F)}^2$
    for all $T \in \mathcal{T}_h$.
  }
\end{equation}

\begin{assumption}\label{ass:global.ibp}
  For all $(v, \underline{q}_h) \in H^1(\Omega)^d \times \underline{P}_h$, it holds
  \begin{equation}\label{eq:global.ibp}
    \mathcal{E}_{{\rm ibp},h}(v, \underline{q}_h)
    \coloneq
    \sum_{T \in \mathcal{T}_h} \sum_{F \in \mathcal{F}_T} \int_F (I_{U,T} v - I_{U,F} v)\cdot n_{TF}\,(q_F - q_T) = 0.
  \end{equation}
  Throughout the rest of the paper, we let
  \[
  \delta \coloneq \begin{cases}
    0 & \text{if Assumption~\ref{ass:global.ibp} holds,} \\
    1 & \text{otherwise.}
  \end{cases}
  \]
\end{assumption}

Notice that the quantity $\mathcal{E}_{{\rm ibp},h}(v, \underline{q}_h)$ measures the failure to satisfy a global integration by parts formula, and therefore corresponds a conformity error in the usual finite element terminology.

\begin{lemma}[Inf-sup and generalized inf-sup conditions]
  Let Assumption~\ref{ass:PT.vs.UT} hold and set, for all $\underline{q}_h \in \underline{P}_h$,
  \[
  \| \underline{q}_h \|_{{\rm P},h} \coloneq \left(
  \| q_h \|_{L^2(\Omega)}^2
  + \sum_{T \in \mathcal{T}_h} h_T^2 \| G_T \underline{q}_T \|_{L^2(T)^d}^2
  \right)^{\frac12}.
  \]
  Then, for all $\underline{q}_h \in \underline{P}_{h,0}$, we have the following (generalized) inf-sup condition:
  \begin{equation}\label{eq:bh:inf-sup}
    \| \underline{q}_h \|_{{\rm P},h}
    \lesssim \sup_{\underline{v}_h \in \underline{U}_{h,0} \setminus \{ \underline{0} \}}
    \frac{b_h(\underline{v}_h, \underline{q}_h)}{\| \underline{v}_h \|_{1,h}}
    + \delta | \underline{q}_h |_{0,h},
  \end{equation}
  which reduces to a standard inf-sup condition when Assumption~\ref{ass:global.ibp} holds.
\end{lemma}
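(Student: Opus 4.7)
The plan is to control the two contributions in $\|\underline{q}_h\|_{{\rm P},h}^2$ separately through two distinct choices of test function in the supremum.

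For the $L^2$-norm of $q_h$, I will use a Fortin-type argument based on the continuous surjectivity of the divergence. Since $\underline{q}_h \in \underline{P}_{h,0}$ implies $q_h \in L^2_0(\Omega)$, there exists $v \in H^1_0(\Omega)^d$ with $\nabla \cdot v = q_h$ and $\|v\|_{H^1(\Omega)^d} \lesssim \|q_h\|_{L^2(\Omega)}$. Setting $\underline{v}_h \coloneq -\underline{I}_{U,h} v$, we have $\underline{v}_h \in \underline{U}_{h,0}$ (the boundary components vanish because $I_{U,F}$ is the $L^2$-projection of $v_{|F} = 0$) and, by \eqref{eq:IUh:continuity}, $\|\underline{v}_h\|_{1,h} \lesssim \|q_h\|_{L^2(\Omega)}$. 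Evaluating $b_h$ via the reformulation \eqref{eq:bh:bis}, applying Proposition~\ref{prop:DT:commutativity} (which uses Assumption~\ref{ass:PT.vs.UT}) to get $D_T \underline{I}_{U,T} v = I_{P,T}(\nabla \cdot v)$, and using that the $L^2$-projector $I_{P,T}$ can be removed against $q_T \in P_T$, the volumetric contribution yields $\int_\Omega (\nabla \cdot v)\, q_h = \|q_h\|_{L^2(\Omega)}^2$, while the face contribution is exactly $-\mathcal{E}_{{\rm ibp},h}(v, \underline{q}_h)$. Thus
\[
b_h(\underline{v}_h, \underline{q}_h) = \|q_h\|_{L^2(\Omega)}^2 - \mathcal{E}_{{\rm ibp},h}(v, \underline{q}_h).
\]
When Assumption~\ref{ass:global.ibp} holds ($\delta = 0$), the last term vanishes. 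Otherwise, a Cauchy--Schwarz applied factorwise to $\mathcal{E}_{{\rm ibp},h}$, combined with the definition \eqref{eq:norm.1h} of $\|\cdot\|_{1,h}$ and \eqref{eq:seminorm.0h} of $|\cdot|_{0,h}$, gives $|\mathcal{E}_{{\rm ibp},h}(v, \underline{q}_h)| \lesssim \|\underline{I}_{U,h} v\|_{1,h}\,|\underline{q}_h|_{0,h} \lesssim \|q_h\|_{L^2(\Omega)}\,|\underline{q}_h|_{0,h}$. Dividing by $\|q_h\|_{L^2(\Omega)}$ and absorbing yields $\|q_h\|_{L^2(\Omega)} \lesssim \sup_{\underline{w}_h} b_h(\underline{w}_h,\underline{q}_h)/\|\underline{w}_h\|_{1,h} + \delta\,|\underline{q}_h|_{0,h}$.

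For the $h_T$-weighted norm of $G_T\underline{q}_T$, I will test against the hybrid field $\underline{w}_h \in \underline{U}_{h,0}$ with element components $w_T \coloneq h_T^2\, G_T \underline{q}_T \in U_T$ and all face components set to zero. On the one hand, by the definition \eqref{eq:bh} of $b_h$,
\[
b_h(\underline{w}_h, \underline{q}_h) = \sum_{T \in \mathcal{T}_h} h_T^2 \|G_T \underline{q}_T\|_{L^2(T)^d}^2.
\]
On the other hand, using standard inverse and discrete trace inequalities on the polynomial components $w_T$, together with $w_F = 0$, one checks that $\|\underline{w}_T\|_{1,T}^2 \lesssim h_T^2 \|G_T \underline{q}_T\|_{L^2(T)^d}^2$, so that $\|\underline{w}_h\|_{1,h} \lesssim \big(\sum_T h_T^2 \|G_T\underline{q}_T\|_{L^2(T)^d}^2\big)^{1/2}$. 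Combining these two estimates and simplifying gives a bound on the $G_T$-contribution by the supremum alone, with no $|\underline{q}_h|_{0,h}$ term.

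Squaring and summing the two partial bounds yields \eqref{eq:bh:inf-sup}. The only mildly delicate point is the first step: ensuring $\underline{I}_{U,h} v$ is admissible in $\underline{U}_{h,0}$ and reducing the $D_T$-term to $\|q_h\|_{L^2(\Omega)}^2$ cleanly relies on Assumption~\ref{ass:PT.vs.UT} through Proposition~\ref{prop:DT:commutativity}. The bookkeeping of the conformity defect $\mathcal{E}_{{\rm ibp},h}$ is what produces the extra $\delta |\underline{q}_h|_{0,h}$ in the generalized case and is absent precisely under Assumption~\ref{ass:global.ibp}, recovering the standard inf-sup condition.
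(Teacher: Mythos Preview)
Your proof is correct and follows essentially the same approach as the paper: the same Fortin-type test function $\underline{I}_{U,h} v$ (with $\nabla\cdot v = q_h$) combined with the commutativity of $D_T$ to recover $\|q_h\|_{L^2(\Omega)}^2$, the same auxiliary test $\underline{w}_h$ with $w_T = h_T^2\,G_T\underline{q}_T$ and zero face components to capture the gradient contribution, and the same Cauchy--Schwarz bound on $\mathcal{E}_{{\rm ibp},h}$ producing the $\delta|\underline{q}_h|_{0,h}$ term. The only difference is organizational: you estimate the two pieces of $\|\underline{q}_h\|_{{\rm P},h}^2$ separately and then combine, whereas the paper treats the full squared norm in a single chain of inequalities.
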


\begin{proof}
  Let $\underline{q}_h \in \underline{P}_{h,0}$ and, for the sake of brevity, denote by $\$$ the supremum in the right-hand side of \eqref{eq:bh:inf-sup}.
  Since $q_h \in L^2_0(\Omega)$, the continuous inf-sup condition yields the existence of
  \begin{equation}\label{eq:continuous.inf-sup}
    \text{%
      $v_q \in H_0^1(\Omega)^d$ such that $\nabla \cdot v_q = q_h$ and $\| v_q \|_{H^1(\Omega)^d} \lesssim \| q_h \|_{L^2(\Omega)}$.
    }
  \end{equation}
  Let $\underline{w}_h \in \underline{U}_h$ be such that $w_T = h_T^2\, G_T \underline{q}_T$ for all $T \in \mathcal{T}_h$ and $w_F = 0$ for all $F \in \mathcal{F}_h$ and notice that, by discrete inverse and trace inequalities,
  \begin{equation}\label{eq:wh:norm.equivalence}
    \| \underline{w}_h \|_{1,h}
    \lesssim \left(
    \sum_{T \in \mathcal{T}_h} h_T^2 \| G_T \underline{q}_T \|_{L^2(T)^d}^2
    \right)^{\frac12}.
  \end{equation}
  We have
  \[
  \begin{aligned}
    \| \underline{q}_h \|_{{\rm P},h}^2
    \overset{\eqref{eq:continuous.inf-sup},\,\eqref{eq:bh}}&= \int_\Omega (\nabla \cdot v_q)\,q_h
    + b_h(\underline{w}_h, \underline{q}_h)
    \\
    &= \sum_{T \in \mathcal{T}_h} \int_\Omega  I_{P,T}(\nabla \cdot v_q)\,q_T
    + b_h(\underline{w}_h, \underline{q}_h)
    \overset{\eqref{eq:DT:commutativity}}= \sum_{T \in \mathcal{T}_h} \int_\Omega (D_T \underline{I}_{U,T} v_q)\,q_T
    + b_h(\underline{w}_h, \underline{q}_h)
    \\
    \overset{\eqref{eq:bh:bis}}&=
    -b_h(\underline{I}_{U,h} v_q, \underline{q}_h)
    + \delta \sum_{T \in \mathcal{T}_h} \sum_{F \in \mathcal{F}_T} \int_F (I_{U,T} v_q - I_{U,F} v_q)\cdot n_{TF}\,(q_F - q_T)
    + b_h(\underline{w}_h, \underline{q}_h)
    \\
    &\le \$ \| \underline{I}_{U,h} v_q \|_{1,h}
    + \delta \| \underline{I}_{U,h} v_q \|_{1,h} | \underline{q}_h |_{0,h}
    + \$ \| \underline{w}_h \|_{1,h}
    \\
    \overset{\eqref{eq:IUh:continuity}}&\lesssim \left(
    \$ + \delta | \underline{q}_h |_{0,h}
    \right) \| v_q \|_{H^1(\Omega)^d}
    + \$ \| \underline{w}_h \|_{1,h}
    \overset{\eqref{eq:continuous.inf-sup},\,\eqref{eq:wh:norm.equivalence}}\lesssim
    \left(
    \$ + \delta | \underline{q}_h |_{0,h}
    \right) \| \underline{q}_h \|_{{\rm P},h},
  \end{aligned}
  \]
  where, in the first inequality, we have used the definition of supremum for the first and third terms and Cauchy--Schwarz inequalities along with the definitions \eqref{eq:norm.1h} of $\|\cdot\|_{1,h}$ and \eqref{eq:seminorm.0h} of $|\cdot|_{0,h}$ for the second term.
  Simplifying, the conclusion follows.
\end{proof}

\subsection{Viscous terms}

Let a local space $\Sigma_T \subset L^2(T)^{d \times d}$ be given.
The discretization of the viscous terms is based on the velocity gradient $E_T : \underline{U}_T \to \Sigma_T$ such that, for all $\underline{v}_T \in \underline{U}_T$ and all $\tau \in \Sigma_T$,
\begin{subequations}
  \begin{align}\label{eq:ET}
    \int_T E_T \underline{v}_T : \tau
    &= - \int_T v_T \cdot (\nabla \cdot \tau)
    + \sum_{F \in \mathcal{F}_T} \int_F v_F \cdot (\tau n_{TF})
    \\ \label{eq:ET'}
    &= \int_T \nabla v_T : \tau
    + \sum_{F \in \mathcal{F}_T} \int_F (v_F - v_T) \cdot (\tau n_{TF}).
  \end{align}
\end{subequations}
The space $\Sigma_T$ is typically selected in order for the following assumption to hold true.

\begin{assumption}\label{ass:SigmaT}
  The following holds:
  \begin{enumerate}
  \item $\nabla \cdot \Sigma_T \subset U_T$ and $\pi_{\nabla \cdot \Sigma_T} \circ I_{U,T} = \pi_{\nabla \cdot \Sigma_T}$;
  \item $\Sigma_T n_{TF} \subset U_F$ for all $F \in \mathcal{F}_T$.
  \end{enumerate}
\end{assumption}

The proof of the following result is analogous to that of Proposition~\ref{prop:DT:commutativity}.

\begin{proposition}[Commutativity of $E_T$]\label{prop:ET:commutativity}
  Under Assumption~\ref{ass:SigmaT}, it holds, for all $v \in H^1(T)^d$,
  \[
    E_T \underline{I}_{U,T} v = \pi_{\Sigma_T} ( \nabla v).
  \]
\end{proposition}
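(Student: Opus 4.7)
The plan is to mirror the proof of Proposition~\ref{prop:DT:commutativity}: instantiate the defining formula of $E_T$ at the interpolate, use Assumption~\ref{ass:SigmaT} to remove the interpolators from the right-hand side, and finally integrate by parts to recover $\int_T \nabla v : \tau$.

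Concretely, first I would apply the definition \eqref{eq:ET} with $\underline{v}_T = \underline{I}_{U,T} v$, which gives, for an arbitrary test $\tau \in \Sigma_T$,
\[
\int_T E_T \underline{I}_{U,T} v : \tau
= -\int_T I_{U,T} v \cdot (\nabla \cdot \tau)
+ \sum_{F \in \mathcal{F}_T} \int_F I_{U,F} v \cdot (\tau n_{TF}).
\]
To discharge the volumetric interpolator, I would observe that $\nabla \cdot \tau \in \nabla\cdot \Sigma_T$ by Assumption~\ref{ass:SigmaT}(1), so the property $\pi_{\nabla\cdot\Sigma_T} \circ I_{U,T} = \pi_{\nabla\cdot\Sigma_T}$ (together with self-adjointness of $L^2$-projectors) yields $\int_T I_{U,T} v \cdot (\nabla\cdot\tau) = \int_T v \cdot (\nabla\cdot\tau)$. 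Analogously, Assumption~\ref{ass:SigmaT}(2) gives $\tau n_{TF} \in U_F$, and since $I_{U,F}$ is the $L^2$-orthogonal projector onto $U_F$, $\int_F I_{U,F} v \cdot (\tau n_{TF}) = \int_F v \cdot (\tau n_{TF})$.

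After these two replacements, an integration by parts on $T$ (valid for $v \in H^1(T)^d$ and $\tau \in L^2(T)^{d \times d}$ with $\nabla\cdot\tau \in L^2(T)^d$, which we may assume up to density) yields
\[
\int_T E_T \underline{I}_{U,T} v : \tau = \int_T \nabla v : \tau \qquad \forall \tau \in \Sigma_T.
\]
Since $E_T \underline{I}_{U,T} v \in \Sigma_T$, the right-hand side characterizes exactly $\pi_{\Sigma_T}(\nabla v)$, and the claim follows. There is no real obstacle here: the only thing to check carefully is that Assumption~\ref{ass:SigmaT} is precisely what is needed to cancel the two interpolators independently, which is the direct analogue of the role played by Assumption~\ref{ass:PT.vs.UT} in Proposition~\ref{prop:DT:commutativity}.
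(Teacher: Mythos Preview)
Your proof is correct and follows exactly the approach indicated by the paper, which simply states that the argument is analogous to that of Proposition~\ref{prop:DT:commutativity}. The only superfluous detail is the density remark for the integration by parts: since Assumption~\ref{ass:SigmaT} already presupposes that elements of $\Sigma_T$ have a well-defined divergence in $U_T$ and normal traces in $U_F$ (and in all concrete instances $\Sigma_T$ is a polynomial space), no approximation argument is needed.
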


We let $a_h : \underline{U}_h \times \underline{U}_h \to \mathbb{R}$ be such that, for all $(\underline{w}_h, \underline{v}_h) \in \underline{U}_h \times \underline{U}_h$,
\begin{equation}\label{eq:ah}
  \text{%
    $a_h(\underline{w}_h, \underline{v}_h)
    \coloneq \sum_{T \in \mathcal{T}_h} a_T(\underline{w}_T, \underline{v}_T)$
    with $a_T(\underline{w}_T, \underline{v}_T) \coloneq
    \int_T E_T \underline{w}_T : E_T \underline{v}_T + s_T(\underline{w}_T, \underline{v}_T)$,
  }
\end{equation}
where $s_T : \underline{U}_T \times \underline{U}_T \to \mathbb{R}$ is a (possibly zero) symmetric positive semi-definite stabilization bilinear form.

\begin{assumption}\label{ass:sT}
  Recalling \eqref{eq:norm.1h}, the following uniform norm equivalence holds:
  \[
  \| \underline{v}_h \|_{1,h} \simeq a_h(\underline{v}_h, \underline{v}_h)^{\frac12}\qquad
  \forall \underline{v}_h \in \underline{U}_{h,0}.
  \]
\end{assumption}

\subsection{An abstract hybrid scheme}

We consider the following hybrid discretization of problem \eqref{eq:stokes}:
Find $(\underline{u}_h, \underline{p}_h) \in \underline{U}_{h,0} \times \underline{P}_{h,0}$ such that
\begin{equation}\label{eq:discrete}
  \begin{aligned}
    \nu a_h(\underline{u}_h, \underline{v}_h)
    + b_h(\underline{v}_h, \underline{p}_h)
    &= \int_\Omega f \cdot v_h
    &\qquad& \forall \underline{v}_h \in \underline{U}_{h,0},
    \\
    -b_h(\underline{u}_h, \underline{q}_h)
    + \delta \nu^{-1} d_h(\underline{p}_h, \underline{q}_h)
    &= 0
    &\qquad& \forall \underline{q}_h \in \underline{P}_{h,0},
  \end{aligned}
\end{equation}
where $v_h \in L^2(\Omega)^d$ (not underlined) is such that $(v_h)_{|T} \coloneq v_T$ for all $T \in \mathcal{T}_h$ and the bilinear form $d_h : \underline{P}_h \times \underline{P}_h \to \mathbb{R}$ is such that
\[
\text{%
  $d_h(\underline{p}_h, \underline{q}_h)
  \coloneq \sum_{T \in \mathcal{T}_h} d_T(\underline{p}_T, \underline{q}_T)$
  with $d_T(\underline{p}_T, \underline{q}_T) \coloneq h_T \sum_{F \in \mathcal{F}_T} \int_F (p_F - p_T)\,(q_F - q_T)$
  for all $T \in \mathcal{T}_h$.
}
\]
Defining the bilinear form $\mathcal{A}_h : \left[ \underline{U}_h \times \underline{P}_h \right]^2 \to \mathbb{R}$ such that, for all $( (\underline{w}_h, \underline{r}_h), (\underline{v}_h, \underline{p}_h) ) \in \left[ \underline{U}_h \times \underline{P}_h \right]^2$,
\begin{equation}\label{eq:Ah}
  \mathcal{A}_h((\underline{w}_h, \underline{r}_h), (\underline{v}_h, \underline{q}_h))
  \coloneq
  \nu a_h(\underline{w}_h, \underline{v}_h)
  + b_h(\underline{v}_h, \underline{r}_h)
  - b_h(\underline{w}_h, \underline{q}_h)
  + \delta \nu^{-1} d_h(\underline{r}_h, \underline{q}_h),
\end{equation}
problem \eqref{eq:discrete} can be equivalently reformulated as:
Find $(\underline{u}_h, \underline{p}_h) \in \underline{U}_{h,0} \times \underline{P}_{h,0}$ such that
\begin{equation}\label{eq:discrete:bis}
  \mathcal{A}_h((\underline{u}_h, \underline{p}_h), (\underline{v}_h, \underline{q}_h))
  = \int_\Omega f \cdot v_h
  \qquad\forall (\underline{v}_h, \underline{q}_h) \in \underline{U}_{h,0} \times \underline{P}_{h,0}.
\end{equation}

%------------------------------------------------------------------------------%

\section{Analysis}\label{sec:analysis}

This section contains the stability and error analysis of the scheme \eqref{eq:discrete}.
A summary of Assumptions~\ref{ass:PT.vs.UT}--\ref{ass:sT} and of their respective roles in the analysis is provided in Table~\ref{tab:assumptions+roles}.

\begin{table}\centering
  \renewcommand{\arraystretch}{1.2}
  \begin{minipage}{\textwidth}\centering
    %% \begin{adjustbox}{width=1\textwidth}
      \begin{tabular}{cc}
        \toprule
        Reference
        & Assumption \\
        \midrule
        Assumption \ref{ass:PT.vs.UT} &
        for all $T \in \mathcal{T}_h$,\quad
        \begin{minipage}{7cm}
          1. $\nabla P_T \subset U_T$ and $\pi_{\nabla P_T} \circ I_{U,T} = \pi_{\nabla P_T}$
          \\
          2. $\tr_F P_T \subset U_F \cdot n_{TF}$ for all $F \in \mathcal{F}_T$
        \end{minipage}
        \\ \midrule
        Assumption \ref{ass:UT.vs.PT} &
        for all $T \in \mathcal{T}_h$,\quad
        \begin{minipage}{7cm}
          1. $\nabla \cdot U_T \subset P_T$
          \\
          2. $U_F \cdot n_{TF} \subset P_F$ for all $F \in \mathcal{F}_T$
        \end{minipage}
        \\ \midrule
        Assumption \ref{ass:global.ibp}
        &
        for all $(v, \underline{q}_h) \in H^1(\Omega)^d \times \underline{P}_h$,\quad
        $\mathcal{E}_{{\rm ibp},h}(v, \underline{q}_h) = 0$
%%         $\displaystyle{\sum_{T \in \mathcal{T}_h} \sum_{F \in \mathcal{F}_T} \int_F (I_{U,T} v - I_{U,F} v)\cdot n_{TF}\,(q_F - q_T) = 0}$
        \\ \midrule
        Assumption \ref{ass:SigmaT}
        &
        for all $T \in \mathcal{T}_h$,\quad
        \begin{minipage}{7cm}
          1. $\nabla \cdot \Sigma_T \subset U_T$ and $\pi_{\nabla \cdot \Sigma_T} \circ I_{U,T} = \pi_{\nabla \cdot \Sigma_T}$
          \\
          2. $\Sigma_T n_{TF} \subset U_F$ for all $F \in \mathcal{F}_T$
        \end{minipage}
        \\ \midrule
        Assumption \ref{ass:sT}
        &
        $\| \cdot \|_{1,h}
        \simeq a_h(\cdot, \cdot)^{\frac12}$
        \\
        \bottomrule
      \end{tabular}
    %% \end{adjustbox}
    \subcaption{Summary of the assumptions used for the analysis.\label{tab:assumptions}}
  \end{minipage}
  \medskip\\
  \begin{minipage}{\textwidth}\centering
    \begin{tabular}{cccc}
      \toprule
      Reference & Result & Required assumptions & Optional assumptions: consequences \\
      \midrule
      Lemma~\ref{lem:stability} & Stability & \ref{ass:PT.vs.UT} and \ref{ass:sT} & \ref{ass:global.ibp}: No need for pressure stabilization \\
      Lemma \ref{lem:err.est} & Error estimate & \ref{ass:PT.vs.UT}, \ref{ass:SigmaT}, and \ref{ass:sT} & \ref{ass:UT.vs.PT}, \ref{ass:global.ibp}: Pressure-robustness \\
      \bottomrule
    \end{tabular}
    \subcaption{Roles of the assumptions in the analysis of the scheme \eqref{eq:discrete}.\label{tab:role.assumptions}}
  \end{minipage}
  \caption{Summary and roles of Assumptions \ref{ass:PT.vs.UT}--\ref{ass:sT}.\label{tab:assumptions+roles}}
\end{table}

\subsection{Stability}

The norm used for the analysis is $\|\cdot\|_{\nu,h} : \underline{U}_h \times \underline{P}_h$ such that, for all $(\underline{v}_h, \underline{q}_h) \in \underline{U}_h \times \underline{P}_h$,
\begin{equation}\label{eq:norm.h}
  \| (\underline{v}_h, \underline{q}_h) \|_{\nu,h}
  \coloneq \left(
  \nu \| \underline{v}_h \|_{1,h}^2
  + \nu^{-1} \| \underline{q}_h \|_{{\rm P},h}^2
  + \delta \nu^{-1} | \underline{q}_h |_{0,h}^2
  \right)^{\frac12}.
\end{equation}
This map defines a norm on $\underline{U}_{h,0}\times \underline{P}_{h,0}$ when $\delta = 1$.
We assume that this is also the case when $\delta = 0$.
We note the following discrete Poincar\'e inequality on hybrid spaces, which can be proved reasoning as in \cite[Lemma~2.15]{Di-Pietro.Droniou:20}:
\begin{equation}\label{eq:poincare}
  \| v_h \|_{L^2(\Omega)^d} \lesssim \| \underline{v}_h \|_{1,h} \qquad \forall \underline{v}_h \in \underline{U}_{h,0}.
\end{equation}

\begin{lemma}[Stability of the scheme]\label{lem:stability}
  Under Assumptions~\ref{ass:PT.vs.UT} and~\ref{ass:sT}, the following uniform inf-sup condition holds:
  For all $(\underline{v}_h, \underline{q}_h) \in \underline{U}_{h,0} \times \underline{P}_{h,0}$,
  \begin{equation}\label{eq:inf-sup}
    \| (\underline{w}_h, \underline{r}_h) \|_{\nu,h}
    \lesssim \sup_{(\underline{v}_h, \underline{q}_h) \in \underline{U}_{h,0} \times \underline{P}_{h,0} \setminus \{ (\underline{0}, \underline{0}) \}}
    \frac{%
      \mathcal{A}_h((\underline{w}_h, \underline{r}_h),(\underline{v}_h, \underline{q}_h))
    }{%
      \| (\underline{v}_h, \underline{q}_h) \|_{\nu,h}
    }.
  \end{equation}
  Hence, problem \eqref{eq:discrete} (or, equivalently, \eqref{eq:discrete:bis}) admits a unique solution which satisfies the following a priori estimate:
  \begin{equation}\label{eq:a-priori}
    \| (\underline{u}_h, \underline{p}_h) \|_{\nu,h} \lesssim
    \nu^{-\frac12} \| f \|_{L^2(\Omega)^d}.
  \end{equation}
\end{lemma}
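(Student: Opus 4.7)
The plan is to combine a diagonal test with a pressure-driven test, in the spirit of the standard Boffi--Brezzi--Fortin argument adapted to the penalized setting. Let $(\underline{w}_h,\underline{r}_h) \in \underline{U}_{h,0}\times\underline{P}_{h,0}$ be given and denote by $\mathcal{S}$ the supremum on the right-hand side of \eqref{eq:inf-sup}. First, I would plug $(\underline{v}_h,\underline{q}_h) = (\underline{w}_h,\underline{r}_h)$ into \eqref{eq:Ah}; the two occurrences of $b_h$ cancel by antisymmetry, leaving
\[
\mathcal{A}_h((\underline{w}_h,\underline{r}_h),(\underline{w}_h,\underline{r}_h)) = \nu\,a_h(\underline{w}_h,\underline{w}_h) + \delta\nu^{-1}d_h(\underline{r}_h,\underline{r}_h) \simeq \nu\|\underline{w}_h\|_{1,h}^2 + \delta\nu^{-1}|\underline{r}_h|_{0,h}^2,
\]
where the equivalence uses Assumption~\ref{ass:sT} and the definition of $d_h$ and $|\cdot|_{0,h}$. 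Since $\|(\underline{w}_h,\underline{r}_h)\|_{\nu,h}$ bounds this test function in norm, this already gives $\nu\|\underline{w}_h\|_{1,h}^2 + \delta\nu^{-1}|\underline{r}_h|_{0,h}^2 \lesssim \mathcal{S}\,\|(\underline{w}_h,\underline{r}_h)\|_{\nu,h}$.

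Second, to recover control on $\nu^{-1}\|\underline{r}_h\|_{\mathrm{P},h}^2$, I would invoke the (generalized) inf-sup condition \eqref{eq:bh:inf-sup} (valid under Assumption~\ref{ass:PT.vs.UT}): there exists $\underline{v}_{\underline{r}}\in\underline{U}_{h,0}$ with $\|\underline{v}_{\underline{r}}\|_{1,h} \lesssim \|\underline{r}_h\|_{\mathrm{P},h}$ such that $b_h(\underline{v}_{\underline{r}},\underline{r}_h) \gtrsim \|\underline{r}_h\|_{\mathrm{P},h}^2 - C\delta|\underline{r}_h|_{0,h}\,\|\underline{r}_h\|_{\mathrm{P},h}$. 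Testing $\mathcal{A}_h$ against $(\underline{v}_{\underline{r}},\underline{0})$ yields $\nu a_h(\underline{w}_h,\underline{v}_{\underline{r}}) + b_h(\underline{v}_{\underline{r}},\underline{r}_h)$; I would estimate the viscous cross term by Cauchy--Schwarz and Assumption~\ref{ass:sT} as $|\nu a_h(\underline{w}_h,\underline{v}_{\underline{r}})| \lesssim \nu\|\underline{w}_h\|_{1,h}\|\underline{r}_h\|_{\mathrm{P},h}$, and verify $\|(\underline{v}_{\underline{r}},\underline{0})\|_{\nu,h} = \nu^{1/2}\|\underline{v}_{\underline{r}}\|_{1,h}\lesssim \nu^{1/2}\|\underline{r}_h\|_{\mathrm{P},h}$. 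After multiplying this test by the balancing factor $\nu^{-1}$, it follows that $\nu^{-1}\|\underline{r}_h\|_{\mathrm{P},h}^2 \lesssim \mathcal{S}\,\nu^{-1/2}\|\underline{r}_h\|_{\mathrm{P},h} + \|\underline{w}_h\|_{1,h}\|\underline{r}_h\|_{\mathrm{P},h} + \delta\nu^{-1}|\underline{r}_h|_{0,h}\|\underline{r}_h\|_{\mathrm{P},h}$.

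Third, I would combine the two previous estimates: test $\mathcal{A}_h$ against $(\underline{w}_h,\underline{r}_h) + \alpha\,(\nu^{-1}\underline{v}_{\underline{r}},\underline{0})$ for a sufficiently small, $\nu$-independent $\alpha>0$. Absorbing the cross terms $\|\underline{w}_h\|_{1,h}\|\underline{r}_h\|_{\mathrm{P},h}$ and $\delta|\underline{r}_h|_{0,h}\|\underline{r}_h\|_{\mathrm{P},h}$ via Young's inequality (with weights that make $\nu$ factors match the norm \eqref{eq:norm.h}), and using that the combined test function has $\|\cdot\|_{\nu,h}$-norm bounded by $\|(\underline{w}_h,\underline{r}_h)\|_{\nu,h}$, I obtain
\[
\|(\underline{w}_h,\underline{r}_h)\|_{\nu,h}^2 \lesssim \mathcal{S}\,\|(\underline{w}_h,\underline{r}_h)\|_{\nu,h},
\]
which proves \eqref{eq:inf-sup}. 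The careful choice of the scaling $\nu^{-1}$ in front of $\underline{v}_{\underline{r}}$ and of $\alpha$ is the main technical point to make the cross-term absorption go through uniformly in $\nu$; everything else is Cauchy--Schwarz and the cited assumptions.

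Finally, well-posedness of \eqref{eq:discrete:bis} follows from \eqref{eq:inf-sup} in the usual way for square linear systems. For the a priori bound \eqref{eq:a-priori}, I would apply \eqref{eq:inf-sup} to $(\underline{u}_h,\underline{p}_h)$, use \eqref{eq:discrete:bis} to rewrite the numerator as $\int_\Omega f\cdot v_h$, and estimate this right-hand side by Cauchy--Schwarz, the discrete Poincaré inequality \eqref{eq:poincare}, and $\|\underline{v}_h\|_{1,h} \le \nu^{-1/2}\|(\underline{v}_h,\underline{q}_h)\|_{\nu,h}$ to obtain the factor $\nu^{-1/2}\|f\|_{L^2(\Omega)^d}$.
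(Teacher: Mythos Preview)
Your proposal is correct and follows essentially the same strategy as the paper: a diagonal test $(\underline{v}_h,\underline{q}_h)=(\underline{w}_h,\underline{r}_h)$ to control $\nu\|\underline{w}_h\|_{1,h}^2+\delta\nu^{-1}|\underline{r}_h|_{0,h}^2$, followed by the (generalized) inf-sup condition \eqref{eq:bh:inf-sup} on $b_h$ to recover $\nu^{-1}\|\underline{r}_h\|_{{\rm P},h}^2$, with Young's inequality to absorb the cross terms. The only difference is packaging: the paper works directly with the supremum throughout---writing $b_h(\underline{v}_h,\underline{r}_h)=\mathcal{A}_h((\underline{w}_h,\underline{r}_h),(\underline{v}_h,\underline{0}))-\nu a_h(\underline{w}_h,\underline{v}_h)$ inside the sup, obtaining $\nu^{-1/2}\|\underline{r}_h\|_{{\rm P},h}\lesssim\mathcal{S}+\mathcal{S}^{1/2}\|(\underline{w}_h,\underline{r}_h)\|_{\nu,h}^{1/2}$, squaring, and adding to the coercivity bound---whereas you extract an explicit near-maximizer $\underline{v}_{\underline{r}}$ and build a single combined test function $(\underline{w}_h,\underline{r}_h)+\alpha\nu^{-1}(\underline{v}_{\underline{r}},\underline{0})$; both routes are standard and equivalent.
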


\begin{remark}[A priori estimate]
  Following, e.g., \cite[Section~2.3]{Castanon-Quiroz.Di-Pietro:20}, in the right-hand side of \eqref{eq:a-priori} we could replace $f$ with the irrotational part of the forcing term.
\end{remark}

\begin{proof}[Proof of Lemma~\ref{lem:stability}]
  Let $(\underline{w}_h, \underline{r}_h) \in \underline{U}_{h,0} \times \underline{P}_{h,0}$ and denote by $\$$ the supremum in the right-hand side of \eqref{eq:inf-sup}.
  Taking $(\underline{v}_h, \underline{q}_h) = (\underline{w}_h, \underline{r}_h)$ in \eqref{eq:Ah} and using Assumption~\ref{ass:sT}, we get
  \begin{equation}\label{eq:inf-sup:coercivity}
    \nu \| \underline{w}_h \|_{1,h}^2
    + \delta \nu^{-1} | \underline{r}_h |_{0,h}^2
    \lesssim
    \mathcal{A}((\underline{w}_h, \underline{r}_h), (\underline{w}_h, \underline{r}_h))
    \lesssim \$ \| (\underline{w}_h, \underline{r}_h) \|_{\nu,h}.
  \end{equation}
  By the (generalized) inf-sup condition \eqref{eq:bh:inf-sup} on $b_h$ we have, on the other hand,
  \begin{equation}\label{eq:inf-sup:L2.pressure}
    \begin{aligned}
      \nu^{-\frac12} \| \underline{r}_h \|_{{\rm P},h}
      &\lesssim
       \nu^{-\frac12} \sup_{\underline{v}_h \in \underline{U}_{h,0} \setminus \{ \underline{0} \}} \frac{ b_h(\underline{v}_h, \underline{r}_h) }{  \| \underline{v}_h \|_{1,h} }
      + \delta \nu^{-\frac12} | \underline{r}_h |_{0,h}
      \\
      \overset{\eqref{eq:Ah}}&=
      \sup_{\underline{v}_h \in \underline{U}_{h,0} \setminus \{ \underline{0} \}} \frac{ \mathcal{A}_h(\underline{w}_h, \underline{r}_h), (\underline{v}_h, \underline{0})) - \nu a_h(\underline{w}_h, \underline{v}_h) }{ \nu^{\frac12} \| \underline{v}_h \|_{1,h} }
      + \delta \nu^{-\frac12} | \underline{r}_h |_{0,h}
      \\
      &\lesssim
      \$
      + \nu^{\frac12} \| \underline{w}_h \|_{1,h}
      + \delta \nu^{-\frac12} | \underline{r}_h |_{0,h}
      \overset{\eqref{eq:inf-sup:coercivity}}\lesssim
      \$ + \$^{\frac12}\| (\underline{w}_h, \underline{r}_h) \|_{\nu,h}^{\frac12},
    \end{aligned}
  \end{equation}
  where we have used Assumption~\ref{ass:sT} along with the symmetry of $a_h$ to write $a_h(\underline{w}_h, \underline{v}_h) \lesssim \| \underline{w}_h \|_{1,h} \| \underline{v}_h \|_{1,h}$ in order to pass to the third line.
  Squaring \eqref{eq:inf-sup:L2.pressure},
  using the fact that $(\alpha + \beta)^2 \le 2\alpha^2 + 2\beta^2$ for all real numbers $\alpha$ and $\beta$ to bound the right-hand side,
  and summing the resulting inequality to \eqref{eq:inf-sup:coercivity}, we obtain
  \[
  \| (\underline{w}_h, \underline{r}_h) \|_{\nu,h}^2
  \lesssim \$ \| (\underline{w}_h, \underline{r}_h) \|_{\nu,h} + \$^2
  \le C_\epsilon \$^2 + \epsilon \| (\underline{w}_h, \underline{r}_h) \|_{\nu,h}^2,
  \]
  where the conclusion follows from the generalized Young's inequality $ab \le \epsilon a^2 + C_\epsilon b^2$ valid for all $\epsilon > 0$.
  Taking $\epsilon = \frac12$ and simplifying, \eqref{eq:inf-sup} follows.

  The a priori estimate \eqref{eq:a-priori} follows classically from this inf-sup condition along with the discrete Poincar\'e inequality \eqref{eq:poincare}.
\end{proof}

\subsection{Error estimate}

\begin{lemma}[Error estimate]\label{lem:err.est}
  Let $(\underline{u}_h, \underline{p}_h) \in \underline{U}_{h,0} \times \underline{P}_{h,0}$ solve \eqref{eq:discrete} (or, equivalently, \eqref{eq:discrete:bis}) and let the weak solution $(u,p) \in H_0^1(\Omega)^d \times L_0^2(\Omega)$ of the Stokes problem \eqref{eq:stokes} be such that $p \in H^1(\Omega)$.
  Then, under Assumptions~\ref{ass:PT.vs.UT}, \ref{ass:SigmaT}, and \ref{ass:sT}, it holds
  \begin{equation}\label{eq:err.est}
    \| (\underline{u}_h - \underline{I}_{U,h} u, \underline{p}_h - \underline{I}_{P,h}p) \|_{\nu,h}
    \lesssim \nu^{\frac12} \mathcal{E}_u + \nu^{-\frac12} \mathcal{E}_p
  \end{equation}
  with
  \begin{subequations}\label{eq:error.components}
    \begin{align}\label{eq:Eu}
      \mathcal{E}_u
      &\coloneq \left[
        \sum_{T \in \mathcal{T}_h} \left(
        h_T \| \nabla u - \pi_{\Sigma_T} (\nabla u) \|_{L^2(\partial T)^{d\times d}}^2
        + s_T(\underline{I}_{U,T} u, \underline{I}_{U,T} u)
        \right)
        + \delta \sup_{\underline{q}_h \in \underline{P}_h,\, | \underline{q}_h |_{0,h} = 1} \mathcal{E}_{{\rm ibp},h}(u,\underline{q}_h)
        \right]^{\frac12},
        \\ \label{eq:Ep}
        \mathcal{E}_p
        &\coloneq \left[
          \sum_{T \in \mathcal{T}_h} \| \pi_{U_T} (\nabla p) - G_T \underline{I}_{P,T} p \|_{L^2(T)^d}^2
          + \delta | \underline{I}_{P,h} p |_{0,h}^2
          \right]^{\frac12}.
    \end{align}
  \end{subequations}
\end{lemma}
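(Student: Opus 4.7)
The plan is to apply the inf-sup stability of Lemma~\ref{lem:stability} to the errors $\underline{w}_h \coloneq \underline{u}_h - \underline{I}_{U,h} u$ and $\underline{r}_h \coloneq \underline{p}_h - \underline{I}_{P,h} p$, which belong to $\underline{U}_{h,0}$ and $\underline{P}_{h,0}$ respectively (since $I_{U,F} u = 0$ on boundary faces by $u_{|\partial\Omega} = 0$ and $L^2$-orthogonality of $I_{U,F}$, and since $I_{P,T}$ preserves integrals on each element, so that $\int_\Omega I_{P,h} p = \int_\Omega p = 0$). Using that $(\underline{u}_h,\underline{p}_h)$ solves \eqref{eq:discrete:bis}, \eqref{eq:inf-sup} reduces the problem to controlling the consistency residual
\begin{equation*}
  \mathcal{R}(\underline{v}_h,\underline{q}_h) \coloneq \int_\Omega f \cdot v_h - \mathcal{A}_h\bigl((\underline{I}_{U,h} u, \underline{I}_{P,h} p), (\underline{v}_h,\underline{q}_h)\bigr)
\end{equation*}
uniformly over $(\underline{v}_h,\underline{q}_h) \in \underline{U}_{h,0}\times \underline{P}_{h,0}\setminus\{(\underline{0},\underline{0})\}$.

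Substituting $f = -\nu\Delta u + \nabla p$ and expanding $\mathcal{A}_h$ via \eqref{eq:Ah}, I would split $\mathcal{R} = A_1 + A_2 + A_3 + A_4$ into a viscous part $A_1 \coloneq -\nu\int_\Omega \Delta u\cdot v_h - \nu a_h(\underline{I}_{U,h} u, \underline{v}_h)$, a pressure-gradient part $A_2 \coloneq \int_\Omega \nabla p\cdot v_h - b_h(\underline{v}_h, \underline{I}_{P,h} p)$, a divergence part $A_3 \coloneq b_h(\underline{I}_{U,h} u, \underline{q}_h)$, and a pressure-stabilization part $A_4 \coloneq -\delta\nu^{-1}d_h(\underline{I}_{P,h} p, \underline{q}_h)$. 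For $A_2$, the definition \eqref{eq:bh} of $b_h$ combined with $v_T \in U_T$ yields $A_2 = \sum_T \int_T (\pi_{U_T}\nabla p - G_T\underline{I}_{P,T} p)\cdot v_T$, bounded via Cauchy--Schwarz and the Poincaré inequality \eqref{eq:poincare} by the first piece of $\mathcal{E}_p$ times $\|\underline{v}_h\|_{1,h}$. For $A_3$, using \eqref{eq:bh:bis}, the commutativity \eqref{eq:DT:commutativity}, and $\nabla\cdot u = 0$, the volume term collapses to zero, leaving $A_3 = \mathcal{E}_{{\rm ibp},h}(u,\underline{q}_h)$, which vanishes when $\delta = 0$ (Assumption~\ref{ass:global.ibp}) and is otherwise bounded by the sup term in $\mathcal{E}_u$ times $|\underline{q}_h|_{0,h}$. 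For $A_4$, Cauchy--Schwarz gives $|A_4|\le \delta\nu^{-1}|\underline{I}_{P,h} p|_{0,h}|\underline{q}_h|_{0,h}$, contributing the second piece of $\mathcal{E}_p$.

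The main obstacle is the viscous part $A_1$. Using the commutativity \eqref{eq:ET:commutativity} and the $L^2$-orthogonality of $\pi_{\Sigma_T}$ (since $E_T\underline{v}_T \in \Sigma_T$), one has $\int_T E_T\underline{I}_{U,T} u : E_T\underline{v}_T = \int_T \nabla u : E_T\underline{v}_T$. Applying \eqref{eq:ET'} with $\tau = \pi_{\Sigma_T}\nabla u\in\Sigma_T$ and comparing with the element-wise IBP form of $-\int_T \Delta u\cdot v_T$ (where $v_F$-terms can be inserted freely, since $\nabla u\cdot n_{TF}$ is single-valued and $v_F = 0$ on $\mathcal{F}_h^{\rm b}$), the target is to reduce the resulting difference to face integrals $\sum_{T,F}\int_F(v_T-v_F)\cdot(\nabla u - \pi_{\Sigma_T}\nabla u)n_{TF}$. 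The delicate step is the cancellation of element-interior residuals, which requires careful exploitation of Assumption~\ref{ass:SigmaT} (in particular $\nabla\cdot\Sigma_T\subset U_T$ and $\Sigma_T n_{TF}\subset U_F$) together with the orthogonality of $\pi_{\Sigma_T}$. Once reduced to boundary contributions, Cauchy--Schwarz yields a bound of the form $\nu\bigl[\sum_T h_T\|\nabla u - \pi_{\Sigma_T}\nabla u\|_{L^2(\partial T)^{d\times d}}^2\bigr]^{1/2}\|\underline{v}_h\|_{1,h}$. The stabilization term $\nu\sum_T s_T(\underline{I}_{U,T} u,\underline{v}_T)$ is controlled by the semi-definite Cauchy--Schwarz and Assumption~\ref{ass:sT}.

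Finally, dividing all bounds by $\|(\underline{v}_h,\underline{q}_h)\|_{\nu,h}$ and using that $\nu^{1/2}\|\underline{v}_h\|_{1,h}$ and $\delta\nu^{-1/2}|\underline{q}_h|_{0,h}$ are both dominated by this norm via \eqref{eq:norm.h}, the viscous contributions sum to $\nu^{1/2}\mathcal{E}_u$ and the pressure contributions to $\nu^{-1/2}\mathcal{E}_p$, yielding \eqref{eq:err.est}.
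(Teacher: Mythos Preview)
Your proposal is correct and follows essentially the same route as the paper's proof: the same reduction to a consistency residual via the inf-sup condition \eqref{eq:inf-sup} (which the paper phrases as an application of the Third Strang Lemma of \cite{Di-Pietro.Droniou:18}), the same four-term splitting, and the same mechanism for each term. The only difference is that for the viscous term $A_1$ you spell out the argument (commutativity $E_T\underline{I}_{U,T}u=\pi_{\Sigma_T}\nabla u$, insertion of $v_F$-terms via single-valuedness of $\nabla u\,n_{TF}$, reduction to face residuals), whereas the paper simply invokes \cite[Lemma~2.18(ii)]{Di-Pietro.Droniou:20}; your sketch of that step is the standard one.
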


\begin{remark}[Pressure robustness]\label{rem:pressure.robustness}
  If Assumption~\ref{ass:UT.vs.PT} is verified, the first contribution in $\mathcal{E}_p$ vanishes by \eqref{eq:GT:commutativity}.
  On the other hand, $\delta = 0$ if Assumption~\ref{ass:global.ibp} holds, which implies that the second contribution in $\mathcal{E}_p$ is zero.
  Therefore, if both Assumptions~\ref{ass:UT.vs.PT} and \ref{ass:global.ibp} are met, recalling the definition \eqref{eq:norm.h} of $\| \cdot \|_{\nu,h}$, we infer the following error estimate for the velocity from \eqref{eq:err.est}:
  \[
  \| \underline{u}_h - \underline{I}_{U,h} u \|_{1,h}
  \lesssim \mathcal{E}_u.
  \]
  The right-hand side of \eqref{eq:err.est} does not depend on the viscosity nor on the pressure, showing that the method is pressure-robust.
\end{remark}

\begin{proof}
  Denote by $\|\cdot\|_{\nu,h,*}$ the norm adjoint to $\|\cdot\|_{\nu,h}$ in $\underline{U}_{h,0} \times \underline{P}_{h,0}$.
  By \cite[Theorem~10]{Di-Pietro.Droniou:18}, the inf-sup condition \eqref{eq:inf-sup} yields the following basic estimate:
  \begin{equation}\label{eq:err.est:basic}
    \| (\underline{u}_h - \underline{I}_{U,h} u, \underline{p}_h - \underline{I}_{P,h} p) \|_{\nu,h}
    \lesssim \| \mathcal{E}_h(\cdot,\cdot) \|_{\nu,h,*},
  \end{equation}
  where the consistency error linear form $\mathcal{E}_h : \underline{U}_{h,0} \times \underline{P}_{h,0} \to \mathbb{R}$ is such that, for all $(\underline{v}_h, \underline{q}_h) \in \underline{U}_{h,0} \times \underline{P}_{h,0}$,
  \begin{equation}\label{eq:Eh}
    \begin{aligned}
      \mathcal{E}_h(\underline{v}_h, \underline{q}_h)
      &\coloneq \int_\Omega f \cdot v_h
      - \mathcal{A}_h((\underline{I}_{U,h} u, \underline{I}_{P,h} p), (\underline{v}_h, \underline{q}_h))
      \\
      &\;=
      \underbrace{%
        \nu\left(
        -\int_\Omega \Delta u - a_h(\underline{I}_{U,h}u, \underline{v}_h)
        \right)
      }_{\mathfrak{T}_1}
      + \underbrace{%
        \int_\Omega \nabla p \cdot v_h - b_h( \underline{v}_h, \underline{I}_{P,h} p)
      }_{\mathfrak{T}_2}
      \\
      &\qquad
      + \underbrace{%
        b_h( \underline{I}_{U,h} u, \underline{q}_h)
      }_{\mathfrak{T}_3}
      + \underbrace{%
        \delta \nu^{-1} d_h(\underline{I}_{P,h} p, \underline{q}_h)
      }_{\mathfrak{T}_4},
    \end{aligned}
  \end{equation}
  where we have used the fact that $f = -\nu \Delta u + \nabla p$ almost everywhere in $\Omega$ (cf. \eqref{eq:stokes}) along with the definition \eqref{eq:Ah} of $\mathcal{A}_h$ to pass to the second line.
  We proceed to estimate the terms in the right-hand side.
  For the first term, reproducing the steps of the proof of \cite[Lemma 2.18(ii)]{Di-Pietro.Droniou:20}, we obtain
  \begin{equation}\label{eq:T1}
    \mathfrak{T}_1
    \lesssim \nu^{\frac12}\mathcal{E}_u\, \nu^{\frac12}\| \underline{v}_h \|_{1,h}
    \overset{\eqref{eq:norm.h}}\le \nu^{\frac12}\mathcal{E}_u\, \| (\underline{v}_h, \underline{q}_h) \|_{\nu,h}.
  \end{equation}

  Expanding $b_h$ according to its definition \eqref{eq:bh}, we can write for the second term:
  \begin{equation}\label{eq:T2}
    \begin{aligned}
      \mathfrak{T}_2
      &= \sum_{T \in \mathcal{T}_h} \int_T \left(
      \pi_{U_T} (\nabla p) - G_T \underline{I}_{P,T} p
      \right) \cdot v_T
      \\
      &\le
      \nu^{-\frac12}\left(
      \sum_{T \in \mathcal{T}_h} \| \pi_{U_T} (\nabla p) - G_T \underline{I}_{P,T} p \|_{L^2(T)^d}^2
      \right)^{\frac12}\,\nu^{\frac12}\| v_h \|_{L^2(\Omega)^d}
      \\
      \overset{\eqref{eq:Ep},\,\eqref{eq:poincare}}&\lesssim
      \nu^{-\frac12}\mathcal{E}_p\,\nu^{\frac12} \| \underline{v}_h \|_{1,h}
      \overset{\eqref{eq:norm.h}}\le \nu^{-\frac12}\mathcal{E}_p\, \| (\underline{v}_h, \underline{q}_h) \|_{\nu,h},
    \end{aligned}
  \end{equation}
  where the insertion of $\pi_{U_T}$ in front of $\nabla p$ in the first line is possible since $v_T \in U_T$, while to pass to the second line we have used Cauchy--Schwarz inequalities on both the integrals and the sums.

  For the third term, we have
  \begin{equation}\label{eq:T3}
    \mathfrak{T}_3
    \overset{\eqref{eq:bh:bis},\,\eqref{eq:global.ibp}}
    = -\sum_{T \in \mathcal{T}_h}  \int_T \cancel{( D_T \underline{I}_{U,T} u )}\,q_T
    + \mathcal{E}_{{\rm ibp},h}(u, \underline{q}_h)
    \overset{\eqref{eq:Eu}}\le \nu^{\frac12}\mathcal{E}_u\,\nu^{-\frac12} | \underline{q}_h |_{0,h}
    \overset{\eqref{eq:norm.h}}\le \nu^{\frac12}\mathcal{E}_u\,\| (\underline{v}_h, \underline{q}_h) \|_{\nu,h},
  \end{equation}
  where the cancellation in the first line is a consequence of the commutation property \eqref{eq:DT:commutativity} (valid since Assumption~\ref{ass:PT.vs.UT} holds).

  Finally, for the fourth term we use Cauchy--Schwarz inequalities and again the definition \eqref{eq:seminorm.0h} of $| \cdot |_{0,h}$ to write
  \begin{equation}\label{eq:T4}
    \mathfrak{T}_4
    \le \delta \nu^{-1} | \underline{I}_{P,h} p |_{0,h}\,| \underline{q}_h |_{0,h}
    \overset{\eqref{eq:Ep},\,\eqref{eq:norm.h}}
    \le \nu^{-\frac12} \mathcal{E}_p\,\| (\underline{v}_h, \underline{q}_h) \|_{\nu,h}.
  \end{equation}

  Gathering the estimates \eqref{eq:T1}, \eqref{eq:T2}, \eqref{eq:T3}, and \eqref{eq:T4} in \eqref{eq:Eh} and passing to the supremum over $(\underline{v}_h, \underline{q}_h) \in \underline{U}_{h,0} \times \underline{P}_{h,0}$ such that $\| (\underline{v}_h, \underline{q}_h) \|_{\nu,h} = 1$ gives
  \[
  \| \mathcal{E}_h(\cdot,\cdot) \|_{\nu,h,*}
  \lesssim \nu^{\frac12} \mathcal{E}_u + \nu^{-\frac12} \mathcal{E}_p.
  \]
  Plugging the above estimate into \eqref{eq:err.est:basic} yields the conclusion.
\end{proof}

%------------------------------------------------------------------------------%

\section{Applications}\label{sec:applications}

In this section we apply the framework above to existing and new schemes.

\begin{sidewaystable}\centering
  \renewcommand{\arraystretch}{1.2}
  \begin{adjustbox}{width=1\textwidth}
    \begin{tabular}{cccccccccc}
      \toprule
      Ref.
      & Mesh
      & $U_T$
      & $I_{U,T}$
      & $U_F$
      & $P_T$
      & $P_F$
      %% & $\Sigma_T$
      & $s_T$
      & Assumptions     
      & Conv. rate
      \\
      \midrule
      Botti--Massa \cite{Botti.Massa:22}
      & Simplicial
      & $\mathcal{BDM}^{k+1}(T)$
      & $I_{\mathcal{BDM},T}^{k+1}$
      & $\mathcal{P}^k(F)^d$
      & $\mathcal{P}^k(T)$
      & $\mathcal{P}^{k+1}(F)$
      %% & $\nabla\mathcal{P}^{k+1}(T)^d$
      & \eqref{eq:sT:classical.hho}
      & \ref{ass:PT.vs.UT}--\ref{ass:sT}
      & $k+1$
      \\ \midrule
      Rhebergen--Wells \cite{Rhebergen.Wells:18}
      & Simplicial
      & $\mathcal{BDM}^k(T)^d$
      & $I_{\mathcal{BDM},T}^k$
      & $\mathcal{P}^k(F)^d$
      & $\mathcal{P}^{k-1}(T)$
      & $\mathcal{P}^k(F)$
      %% & $\mathcal{P}^{k-1}(T)^{d \times d}$
      & \eqref{eq:RW:sT}
      & \ref{ass:PT.vs.UT}-- \ref{ass:sT}
      & $k$    
      \\ \midrule
      New
      & Simplicial
      & $\mathcal{RTN}^{k+1}(T)$
      & $I_{\mathcal{RTN},T}^{k+1}$
      & $\mathcal{P}^k(F)^d$
      & $\mathcal{P}^k(T)$
      & $\mathcal{P}^k(F)$
      %% & $\nabla\mathcal{P}^{k+1}(T)^d$
      & \eqref{eq:sT:classical.hho}
      & \ref{ass:PT.vs.UT}-- \ref{ass:sT}
      & $k+1$
      \\ \midrule
      New
      & Cartesian
      & $\mathcal{BDFM}^{k+1}(T)$
      & $I_{\mathcal{BDFM},T}^{k+1}$
      & $\mathcal{P}^k(F)^d$
      & $\mathcal{P}^k(T)$
      & $\mathcal{P}^k(F)$
      %% & $\nabla\mathcal{P}^{k+1}(T)^d$
      & \eqref{eq:sT:classical.hho}
      & \ref{ass:PT.vs.UT}-- \ref{ass:sT}
      & $k+1$
      \\ \midrule
      New
      & Polytopal
      & $\mathcal{P}^{k+1}(T)^d$
      & $\pi_{\mathcal{P}^{k+1}(T)^d}$
      & $\mathcal{P}^k(F)^d$
      & $\mathcal{P}^k(T)$
      & $\mathcal{P}^k(F)$
      %% & $\nabla\mathcal{P}^{k+1}(T)^d$
      & \eqref{eq:sT:classical.hho}
      & \ref{ass:PT.vs.UT}, \ref{ass:UT.vs.PT}, \ref{ass:SigmaT}, and \ref{ass:sT}
      & $k+1$
      \\
      \bottomrule
    \end{tabular}
  \end{adjustbox}
  \caption{Space choices and properties.
    The last column contains the maximum convergence rate for the error norm in the left-hand side of \eqref{eq:err.est} attainable for smooth solutions when the polynomial degree $k$ is used.}
\end{sidewaystable}

\subsection{Preliminaries}

\subsubsection{Local full polynomial spaces}

For a given integer $\ell\ge 0$, we denote by $\mathbb{P}_d^\ell$ the space of $d$-variate polynomials of total degree $\le \ell$, with the convention that $\mathbb{P}_d^{-1} \coloneq \{ 0 \}$.
Given a mesh element or face $Y \in \mathcal{T}_h \cup \mathcal{F}_h$, we denote by $\mathcal{P}^\ell(Y)$ the space spanned by the restriction to $Y$ of the functions in $\mathbb{P}_d^\ell$.

\subsubsection{The Brezzi--Douglas--Marini space}

Let an integer $\ell \ge 0$ and a triangle/tetrahedron $T$ be given.
We denote by $x_T$ a point inside $T$ which, when $T$ is an element of a mesh belonging to a refined sequence, we assume at a distance $\simeq h_T$ from the boundary.
Let
\[
\mathcal{G}^\ell(T) \coloneq \nabla \mathcal{P}^{\ell+1}(T),\qquad
\mathcal{G}_{\rm c}^\ell(T) \coloneq \begin{cases}
  (x - x_T)^\perp \mathcal{P}^{\ell-1}(T) & \text{if $d = 2$},
  \\
  (x - x_T) \times \mathcal{P}^{\ell-1}(T)^3 & \text{if $d = 3$},
\end{cases}
\]
where $\perp$ indicates a rotation of $-\frac{\pi}{2}$.
For $\ell \ge 0$, the N\'ed\'elec space of the first type is
\[
\mathcal{N}^{\ell}(T)
\coloneq \mathcal{G}^{\ell-1}(T) \oplus \mathcal{G}_{\rm c}^{\ell}(T),
\]
and we adopt the convention that $\mathcal{N}^{-1}(T) \coloneq \{ 0 \}$.

For any $\ell \ge 1$, the Brezzi--Douglas--Marini space is $\mathcal{BDM}^\ell(T) \coloneq \mathcal{P}^\ell(T)^d$ equipped with the interpolator $I_{\mathcal{BDM},T}^\ell : H^1(T)^d \to \mathcal{BDM}^\ell(T)$ such that, for all $v \in H^1(T)^d$,
\begin{subequations}\label{eq:I.BDM}
  \begin{align}\label{eq:I.BDM:F}
    \forall F \in \mathcal{F}_T, &\qquad&
    \int_F (I_{\mathcal{BDM},T}^\ell v \cdot n_{TF})\, q &= \int_F (v \cdot n_{TF})\, q
    &\qquad& \forall q \in \mathcal{P}^\ell(F),
    \\ \label{eq:I.BDM:T}
      && \int_T I_{\mathcal{BDM},T}^\ell v \cdot w &= \int_T v \cdot w
      &\qquad& \forall w \in \mathcal{N}^{\ell-1}(T).
  \end{align}
\end{subequations}

\begin{remark}[$L^2$-orthogonal projections of the BDM interpolate onto full polynomial spaces]\label{eq:proj.I.BDM}
  Since $\mathcal{P}^{\ell-2}(T)^d \subset \mathcal{N}^{\ell-1}(T)$, condition \eqref{eq:I.BDM:T} implies, in particular,
  \begin{equation}\label{eq:pi.I.BDM=pi}
    \pi_{\mathcal{P}^{\ell-2}(T)^d} \circ I_{\mathcal{BDM},T}^\ell = \pi_{\mathcal{P}^{\ell-2}(T)^d}.
  \end{equation}
  We also notice that the interpolator preserves the average value, i.e., for all $\ell \ge 1$,
  \[
  \pi_{\mathcal{P}^0(T)^d} \circ I_{\mathcal{BDM},T}^\ell = \pi_{\mathcal{P}^0(T)^d}.
  \]
  For $\ell \ge 2$, this is a straightforward consequence of \eqref{eq:pi.I.BDM=pi}.
  For $\ell = 1$, we start by noticing that, for all $v \in \mathcal{BDM}^1(T)$ and all $q \in \mathcal{P}^1(T) \cap L^2_0(T)$,
  \[
  \begin{aligned}
    \int_T I_{\mathcal{BDM},T}^1 v \cdot \nabla q
    &=  - \cancel{ \int_T (\nabla \cdot I_{\mathcal{BDM},T}^1 v)\, q }
    + \sum_{F \in \mathcal{F}_T} \int_F (I_{\mathcal{BDM},T}^1 v \cdot n_{TF})\, q
    \\
    \overset{\eqref{eq:I.BDM:F}}&= \sum_{F \in \mathcal{F}_T} \int_F (v \cdot n_{TF})\, q
    = \int_F v \cdot \nabla q,
  \end{aligned}
  \]
  where the cancellation follows noticing that $\nabla \cdot I_{\mathcal{BDM},T}^1 v \in \mathcal{P}^0(T)$ and recalling that $q$ has zero-average on $T$.
  Since $\nabla q$ spans $\mathcal{P}^0(T)^d$ as $q$ spans $\mathcal{P}^1(T) \cap L^2_0(T)$, this concludes the proof of \eqref{eq:pi.I.BDM=pi} for $\ell = 1$.
\end{remark}

\subsubsection{The Raviart--Thomas--N\'ed\'elec space}

For any integer $\ell \ge 0$, let $\rot \coloneq \nabla^\perp$ and define
\[
\mathcal{R}^\ell(T) \coloneq \begin{cases}
  \rot \mathcal{P}^{\ell+1}(T) & \text{if $d=2$}, \\
  \curl \mathcal{P}^{\ell+1}(T)^d & \text{if $d=3$},
\end{cases}\qquad
\mathcal{R}_{\rm c}^\ell(T) \coloneq (x - x_T) \mathcal{P}^{\ell-1}(T).
\]
For any $\ell \ge 1$, the Raviart--Thomas--N\'ed\'elec space is
\[
\mathcal{RTN}^\ell(T)
\coloneq
\mathcal{R}^{\ell-1}(T) \oplus \mathcal{R}_{\rm c}^\ell(T),
\]
furnished with the interpolator $I_{\mathcal{RTN},T}^\ell : H^1(T)^d \to \mathcal{RTN}^\ell(T)$ such that, for all $v \in H^1(T)^d$,
\begin{subequations}\label{eq:I.RTN}
  \begin{align}\label{eq:I.RTN:F}
    \forall F \in \mathcal{F}_T, &\qquad&
    \int_F (I_{\mathcal{RTN},T}^\ell v \cdot n_{TF})\, q &= \int_F (v \cdot n_{TF})\, q
    &\qquad& \forall q \in \mathcal{P}^{\ell-1}(F),
    \\ \label{eq:I.RTN:T}
    && \int_T I_{\mathcal{RTN},T}^\ell v \cdot w &= \int_T v \cdot w
    &\qquad& \forall w \in \mathcal{P}^{\ell-2}(T)^d.    
  \end{align}
\end{subequations}
It is clear from \eqref{eq:I.RTN:T} that
\begin{equation}\label{eq:pi.I.RTN=pi}
  \pi_{\mathcal{P}^{\ell-2}(T)^d} \circ I_{\mathcal{RTN},T}^\ell = \pi_{\mathcal{P}^{\ell-2}(T)^d}.
  \end{equation}
Using the previous relation for $\ell \ge 2$, it is also clear that the Raviart--Thomas--N\'ed\'elec interpolator preserves the average value of the interpolated function on $T$.
This is not the case, however, for $\ell = 1$.

\subsubsection{The rectangular Brezzi--Douglas--Fortin--Marini space}

Let $T$ denote a rectangle (if $d = 2$) or a rectangular parallelepiped (if $d=3$).
For any $\ell \ge 1$, the Brezzi--Douglas--Fortin--Marini of space of \cite{Brezzi.Douglas.ea:87} is 
\[
\mathcal{BDFM}^\ell(T)
\coloneq
\mathcal{P}^{\ell}(T)^d \setminus \mathcal{P}_{\rm hom}^\ell(T)^d,
\]
where, letting $y \coloneq x - x_T$, the space $\mathcal{P}_{\rm hom}^\ell(T)^d$ is defined as
\[
\mathcal{P}_{\rm hom}^\ell(T)^d \coloneq \begin{cases}
\operatorname{span}\left\{ \left( \begin{array}{c}y_2^\ell \\ 0 \end{array}\right),\, \left( \begin{array}{c} 0 \\ y_1^\ell \end{array}\right) \right\} & \text{if $d=2$}, \\
\operatorname{span}\left\{\left( \begin{array}{c}y_2^\ell \\ 0 \\ 0 \end{array}\right),\, \left( \begin{array}{c}y_3^\ell \\ 0 \\ 0 \end{array}\right),\, \left( \begin{array}{c}0 \\ y_1^\ell \\ 0 \end{array}\right),\, \left( \begin{array}{c}0 \\ y_3^\ell \\ 0 \end{array}\right),\, \left( \begin{array}{c}0 \\ 0 \\ y_1^\ell \end{array}\right),\, \left( \begin{array}{c}0 \\ 0\\ y_2^\ell  \end{array}\right) \right\}
& \text{if $d=3$}.
\end{cases}
\]
The space $\mathcal{BDFM}^\ell(T)$ is endowed with the interpolator $I_{\mathcal{BDFM},T}^\ell : H^1(T)^d \to \mathcal{BDFM}^\ell(T)$ such that, for all $v \in H^1(T)^d$,
\begin{subequations}\label{eq:I.BDFM}
  \begin{align}\label{eq:I.BDFM:F}
    \forall F \in \mathcal{F}_T, &\qquad&
    \int_F (I_{\mathcal{BDFM},T}^\ell v \cdot n_{TF})\, q &= \int_F (v \cdot n_{TF})\, q
    &\qquad& \forall q \in \mathcal{P}^{\ell-1}(F),
    \\ \label{eq:I.BDFM:T}
    && \int_T I_{\mathcal{BDFM},T}^\ell v \cdot w &= \int_T v \cdot w
    &\qquad& \forall w \in \mathcal{P}^{\ell-2}(T)^d.    
  \end{align}
\end{subequations}
It is clear from \eqref{eq:I.BDFM:T} that $\pi_{\mathcal{P}^{\ell-2}(T)^d} \circ I_{\mathcal{BDFM},T}^\ell = \pi_{\mathcal{P}^{\ell-2}(T)^d}$.

\subsubsection{Stabilization}

Let a mesh element $T \in \mathcal{T}_h$ be fixed.
In the context of HHO methods, the local stabilization bilinear form $s_T$ in \eqref{eq:ah} hinges on a velocity reconstruction $r_T : \underline{U}_T \to W_T$, with $W_T \subset L^2(T)^d$, defined by
\begin{subequations}\label{eq:rT}
  \begin{gather}\label{eq:rT:pde}
    \int_T \nabla r_T \underline{v}_T : \nabla w
    = - \int_T v_T \cdot \Delta w
    + \sum_{F \in \mathcal{F}_T} \int_F v_F \cdot (\nabla w\, n_{TF})
    \qquad\forall w \in W_T,
    \\ \label{eq:rT:closure}
    \int_T r_T \underline{v}_T = \begin{cases}
      \int_T v_T & \text{if $\pi_{\mathcal{P}^0(T)^d} \circ I_{U,T} = \pi_{\mathcal{P}^0(T)^d}$},
      \\
      \frac{|T|}{\operatorname{card}(\mathcal{F}_T)}\sum_{F \in \mathcal{F}_T} \frac{1}{|F|} \int_F v_F
      & \text{otherwise},
    \end{cases} 
  \end{gather}
\end{subequations}
with $| \cdot |$ denoting the Haussdorff measure.

\begin{remark}[Preservation of the average value of affine functions]\label{rem:rT.avg.value}
  Denote by $x_T \coloneq \frac{1}{|T|} \int_T x$ the center of mass of $T$.
  To check that the second condition in \eqref{eq:rT:closure} ensures that $\int_T r_T \underline{I}_{U,T} v = \int_T v$ for all $v \in \mathcal{P}^1(T)^d$, we write
  \[
  \begin{aligned}
    \int_T r_T \underline{I}_{U,T} v
    = \frac{|T|}{\operatorname{card}(\mathcal{F}_T)} \sum_{F \in \mathcal{F}_T} \frac{1}{|F|} \int_F v
    = \frac{|T|}{\operatorname{card}(\mathcal{F}_T)} \sum_{F \in \mathcal{F}_T} v(x_F)
    = |T|\, v(x_T)
    = \int_T v,
  \end{aligned}
  \]
  where we have used the linearity of $v$ in the second and third equalities and, for the latter, also the fact that $x_T = \frac{1}{\operatorname{card}(\mathcal{F}_T)} \sum_{F \in \mathcal{F}_T} x_F$ with $x_F$ denoting the center of mass of $F \in \mathcal{F}_T$.
\end{remark}

One then obtains $s_T$ penalizing the following quantities:
\begin{equation}\label{eq:delta.T.TF}
  \text{%
    $\delta_T \underline{v}_T \coloneq I_{U,T} ( r_T \underline{v}_T - v_T )$
    and $\delta_{TF} \underline{v}_T \coloneq I_{U,F} (r_T \underline{v}_T - v_F)$ for all $F \in \mathcal{F}_T$.
  }
\end{equation}
The expression for the local stabilization bilinear form $s_T : \underline{U}_T \times \underline{U}_T \to \mathbb{R}$ used in the numerical examples of Section~\ref{sec:numerical.examples} is:
For all $(\underline{w}_T, \underline{v}_T) \in \underline{U}_T \times \underline{U}_T$,
\begin{equation}\label{eq:sT:classical.hho}
   s_T(\underline{w}_T, \underline{v}_T)
   \coloneq
   \lambda_T h_T^{-2}\int_T \delta_T\underline{w}_T \cdot \delta_T\underline{v}_T
   + \sum_{F \in \mathcal{F}_T} h_F^{-1} \int_F \delta_{TF}\underline{w}_T \cdot \delta_{TF}\underline{v}_T,
\end{equation}
where the purpose of $\lambda_T \coloneq \operatorname{card}(\mathcal{F}_T) \frac{h_T^d}{|T|} \simeq 1$ is to equilibrate the volumetric and boundary contributions; cf. \cite{Di-Pietro.Droniou:23*2}.
Many other choices for $s_T$ are possible, the only requirements being that the dependence on the arguments is via the difference operators $\delta_T$ and $\delta_{TF}$, $F \in \mathcal{F}_T$, and that the scaling in $h_T$ is the same as that of \eqref{eq:sT:classical.hho}.
We refer to \cite[Chapter~2]{Di-Pietro.Droniou:20} for a more general discussion highlighting the key properties of HHO stabilizing bilinear forms and further examples.

\subsection{The classical Botti--Massa method}\label{sec:applications:botti-massa}

We assume here that $\mathcal{M}_h$ is a standard matching simplicial mesh that belongs to a regular sequence.
The classical Botti--Massa method of \cite{Botti.Massa:22} corresponds to the following choice of spaces for a given polynomial degree $k \ge 0$:
\begin{equation}\label{eq:botti-massa:spaces}
  U_T = \mathcal{BDM}^{k+1}(T),\quad
  U_F = \mathcal{P}^k(F)^d,\quad
  P_T = \mathcal{P}^k(T),\quad
  P_F = \mathcal{P}^{k+1}(F),
\end{equation}
along with the local stabilization \eqref{eq:sT:classical.hho} obtained reconstructing the velocity in $W_T = \mathcal{P}^{k+1}(T)^d$.
The space $\Sigma_T$ can be taken equal to $\nabla \mathcal{P}^{k+1}(T)^d$ or $\mathcal{P}^k(T)^{d\times d}$.
Notice that the expression of $\delta_T$ is actually simpler in this case: observing that $r_T \underline{v}_T - v_T \in \mathcal{P}^{k+1}(T)^d$ for all $\underline{v}_T \in \underline{U}_T$ and that $I_{U,T} = I_{\mathcal{BDM},T}^{k+1}$ restricted to $\mathcal{P}^{k+1}(T)^d$ is the identity, we can simply write $\delta_T \underline{v}_T = r_T \underline{v}_T - v_T$.
This stabilization classically satisfies Assumption~\ref{ass:sT} as well as the other standard properties of HHO stabilizations; see \cite[Chapter~2]{Di-Pietro.Droniou:20}.
For the polynomial consistency property, in particular, we use the fact that $r_T \circ \underline{I}_{U,T}$ coincides (component-wise) with the elliptic projector of \cite[Definition~1.39]{Di-Pietro.Droniou:20}.

\begin{lemma}[$\|\cdot\|_{\nu,h}$-norm]  
  The map $\|\cdot\|_{\nu,h}$ defined by \eqref{eq:norm.h} is a norm on $\underline{U}_{h,0} \times \underline{P}_{h,0}$ with local spaces given by \eqref{eq:botti-massa:spaces}.
\end{lemma}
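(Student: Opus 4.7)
The plan is to verify the only nontrivial property of a norm, namely positive definiteness on $\underline{U}_{h,0}\times\underline{P}_{h,0}$; homogeneity and the triangle inequality follow automatically from the fact that $\|\cdot\|_{\nu,h}$ is a Euclidean-type combination of seminorms. Since the Botti--Massa spaces satisfy Assumption~\ref{ass:global.ibp} (this is implicit in the statement that all of Assumptions~\ref{ass:PT.vs.UT}--\ref{ass:sT} hold for this scheme), we are in the $\delta = 0$ case, so the jump term $|\underline{q}_h|_{0,h}^2$ is absent from \eqref{eq:norm.h}. This is precisely the case noted but not verified after \eqref{eq:norm.h}.

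Assume $\|(\underline{v}_h,\underline{q}_h)\|_{\nu,h} = 0$. For the velocity, $\|\underline{v}_h\|_{1,h}=0$ together with \eqref{eq:norm.1h} yields $\nabla v_T = 0$ and $v_F = v_T$ on every $F\in\mathcal{F}_T$, for all $T\in\mathcal{T}_h$: each $v_T$ is a constant, the matching face conditions propagate this constant across interfaces, and the homogeneous Dirichlet constraint built into $\underline{U}_{h,0}$ forces it to vanish by connectedness of the mesh. This is the standard argument showing that $\|\cdot\|_{1,h}$ is a norm on $\underline{U}_{h,0}$.

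The pressure part is the heart of the proof. From $\|\underline{q}_h\|_{{\rm P},h}=0$ I extract $q_T = 0$ for every $T\in\mathcal{T}_h$ (from $\|q_h\|_{L^2(\Omega)}$) and $G_T\underline{q}_T = 0$ for every $T$. Substituting $q_T=0$ into the definition \eqref{eq:GT} of $G_T$ gives
\[
\sum_{F\in\mathcal{F}_T}\int_F q_F\,(v_T\cdot n_{TF}) = 0 \qquad \forall v_T \in \mathcal{BDM}^{k+1}(T).
\]
The crux is a BDM normal-trace lifting: by the face degrees of freedom \eqref{eq:I.BDM:F}, the space $\mathcal{BDM}^{k+1}(T)$ admits an independent, arbitrary prescription of the normal trace in $\mathcal{P}^{k+1}(F) = P_F$ on each $F\in\mathcal{F}_T$. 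Fixing $F_0 \in \mathcal{F}_T$, I choose $v_T$ with $v_T\cdot n_{TF_0} = q_{F_0}$ and $v_T\cdot n_{TF'} = 0$ for every $F'\in\mathcal{F}_T\setminus\{F_0\}$; the displayed identity collapses to $\int_{F_0} q_{F_0}^2 = 0$, hence $q_{F_0} = 0$. Ranging over all $T$ and $F_0$ gives $\underline{q}_h = \underline{0}$.

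The only delicate point is the BDM normal-trace lifting used in the final step, which crucially exploits the compatibility $P_F=\mathcal{P}^{k+1}(F)$ with the degree of the BDM face DOFs; all remaining manipulations are routine unwindings of the definitions.
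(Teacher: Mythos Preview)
Your proof is correct and follows essentially the same approach as the paper: reduce to positive definiteness, dispatch the velocity via the standard connectedness argument for $\|\cdot\|_{1,h}$ on $\underline{U}_{h,0}$, and for the pressure exploit $q_T=0$ together with $G_T\underline{q}_T=0$ in \eqref{eq:GT} combined with a BDM normal-trace lifting. The only cosmetic difference is that the paper lifts $q_F$ simultaneously on all faces of $T$ (taking $w\cdot n_{TF}=q_F$ for every $F\in\mathcal{F}_T$), whereas you isolate one face at a time; both choices are afforded by the same face degrees of freedom \eqref{eq:I.BDM:F}.
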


\begin{proof}
  It suffices to show that, for all $(\underline{v}_h, \underline{q}_h) \in \underline{U}_{h,0} \times \underline{P}_{h,0}$, $\| (\underline{v}_h, \underline{q}_h) \|_{\nu,h} = 0$ implies $(\underline{v}_h, \underline{q}_h) = (\underline{0}, \underline{0})$.
  We start by noticing that $\| \underline{v}_h \|_{1,h} = 0$ classically implies $\underline{v}_h = \underline{0}$ (see, e.g., \cite[Corollary~2.16]{Di-Pietro.Droniou:20}) and that $\| \underline{q}_h \|_{{\rm P},h} = 0$ implies $q_T = 0$ and $G_T \underline{q}_T = 0$ for all $T \in \mathcal{T}_h$.
  It only remains to prove that $q_F = 0$ for all $F \in \mathcal{F}_T$.
  To this purpose, we notice that, for all $w \in \mathcal{BDM}^{k+1}(T)$, accounting for the fact that $v_T = 0$,
  \begin{equation}\label{eq:botti-massa:norm.1.h}
    0 = \int_T G_T \underline{q}_T \cdot w
    \overset{\eqref{eq:GT}}= \sum_{F \in \mathcal{F}_T} \int_F q_F\, (w \cdot n_{TF}).
  \end{equation}
  Taking $w$ such that $w \cdot n_F = q_F$ for all $F \in \mathcal{F}_T$ (possible by virtue of \eqref{eq:I.BDM}) concludes the proof.
\end{proof}

\begin{theorem}[Properties of the Botti--Massa method]\label{thm:botti-massa}
  Let $\mathcal{M}_h$ denote a conforming simplicial mesh.
  Then, the method of \cite{Botti.Massa:22} verifies Assumptions \ref{ass:PT.vs.UT}--\ref{ass:sT}.
  Moreover, assuming that the weak solution $(u,p) \in H_0^1(\Omega)^d \times L_0^2(\Omega)$ of the Stokes problem \eqref{eq:stokes} satisfies the additional regularity $u \in H^{r+2}(\mathcal{T}_h)^d$ for some $r \in \{0, \ldots, k\}$ and $p \in H^1(\Omega)$, it holds, for the error components defined by \eqref{eq:error.components},
  \[
  \text{%
    $\mathcal{E}_u \lesssim h^{r+1} | u |_{H^{r+2}(\mathcal{T}_h)^d}$
    and $\mathcal{E}_p = 0$.
  }
  \]
\end{theorem}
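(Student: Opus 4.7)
The plan is to verify Assumptions~\ref{ass:PT.vs.UT}--\ref{ass:sT} for the local spaces \eqref{eq:botti-massa:spaces} and then read off the asymptotic orders of $\mathcal{E}_u$ and $\mathcal{E}_p$. Most of Assumptions~\ref{ass:PT.vs.UT}, \ref{ass:UT.vs.PT} and~\ref{ass:SigmaT} (with the choice $\Sigma_T = \mathcal{P}^k(T)^{d\times d}$) reduces to polynomial bookkeeping: I would use $\nabla \mathcal{P}^k(T) \subset \mathcal{P}^{k-1}(T)^d \subset \mathcal{BDM}^{k+1}(T)$, $\tr_F \mathcal{P}^k(T) \subset \mathcal{P}^k(F) = U_F\cdot n_{TF}$, $\nabla\cdot \mathcal{BDM}^{k+1}(T) \subset \mathcal{P}^k(T) = P_T$, $\mathcal{P}^k(F)\subset \mathcal{P}^{k+1}(F) = P_F$, and $\Sigma_T n_{TF} \subset \mathcal{P}^k(F)^d = U_F$ for the inclusions. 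The two projection identities for $I_{U,T} = I_{\mathcal{BDM},T}^{k+1}$ would follow from the Koszul-type inclusion $\mathcal{P}^{k-1}(T)^d = \mathcal{G}^{k-1}(T) \oplus \mathcal{G}_{\rm c}^{k-1}(T) \subset \mathcal{N}^{k}(T)$ combined with \eqref{eq:I.BDM:T} at $\ell = k+1$. Assumption~\ref{ass:sT} is the classical HHO norm equivalence for the pair \eqref{eq:rT}--\eqref{eq:sT:classical.hho}, which I would import from \cite[Chapter~2]{Di-Pietro.Droniou:20}.

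The delicate step is Assumption~\ref{ass:global.ibp}, since the asymmetry $P_F = \mathcal{P}^{k+1}(F) \supsetneq U_F\cdot n_{TF} = \mathcal{P}^k(F)$ threatens the integration-by-parts identity. I would expand $\mathcal{E}_{{\rm ibp},h}(v,\underline{q}_h)$ using the two normal-trace identifications $(I_{\mathcal{BDM},T}^{k+1} v)\cdot n_{TF}|_F = \pi_{\mathcal{P}^{k+1}(F)}(v\cdot n_{TF})$, which follows from \eqref{eq:I.BDM:F}, and $(I_{U,F} v)\cdot n_{TF} = \pi_{\mathcal{P}^k(F)}(v\cdot n_{TF})$, since $U_F = \mathcal{P}^k(F)^d$. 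Because $q_T|_F \in \mathcal{P}^k(F) \subset \mathcal{P}^{k+1}(F)$ and $q_F \in \mathcal{P}^{k+1}(F)$, the orthogonalities of these projections against the test pressures collapse $\mathcal{E}_{{\rm ibp},h}$ to the single-valued face quantity $\sum_T \sum_{F\in\mathcal{F}_T} \int_F (v\cdot n_{TF})\,(q_F - \pi_{\mathcal{P}^k(F)} q_F)$. On interior faces the two adjacent elements contribute with opposite signs of $n_{TF}$ and cancel, while on boundary faces $v\cdot n = 0$ because Assumption~\ref{ass:global.ibp} is only invoked for $v$ with vanishing Dirichlet trace in the analysis of Section~\ref{sec:analysis}.

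Granted Assumptions~\ref{ass:PT.vs.UT}--\ref{ass:sT}, the error bounds are immediate. Proposition~\ref{prop:GT:commutativity} yields $G_T\underline{I}_{P,T} p = \pi_{U_T}(\nabla p)$, killing the first contribution to $\mathcal{E}_p$, and $\delta = 0$ kills the second, whence $\mathcal{E}_p = 0$. For $\mathcal{E}_u$ the $\delta$-term also vanishes, leaving a trace-approximation contribution controlled by $h_T\|\nabla u - \pi_{\Sigma_T}(\nabla u)\|_{L^2(\partial T)^{d\times d}}^2 \lesssim h_T^{2r+2}|u|_{H^{r+2}(T)^d}^2$ via standard polynomial trace estimates, and a stabilization contribution controlled by $s_T(\underline{I}_{U,T} u,\underline{I}_{U,T} u) \lesssim h_T^{2r+2}|u|_{H^{r+2}(T)^d}^2$ through the approximation properties of the elliptic projection encoded in $r_T\circ\underline{I}_{U,T}$; summing over $T$ and taking square roots yields the $h^{r+1}$ rate.

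The principal obstacle is Assumption~\ref{ass:global.ibp}: the mismatched polynomial degrees on faces make the global IBP identity nontrivial, and obtaining the interior-face cancellation requires the explicit projection bookkeeping described above; everything else is either polynomial arithmetic or a direct appeal to the HHO toolbox.
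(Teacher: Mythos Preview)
Your proposal is correct and follows essentially the same route as the paper: the polynomial inclusions and projection identities for Assumptions~\ref{ass:PT.vs.UT}, \ref{ass:UT.vs.PT}, \ref{ass:SigmaT} are handled identically, the appeal to the HHO toolbox for Assumption~\ref{ass:sT} and for the error bounds matches, and your treatment of Assumption~\ref{ass:global.ibp}---reducing the integrand via the normal-trace identities $(I_{\mathcal{BDM},T}^{k+1} v)\cdot n_{TF} = \pi_{\mathcal{P}^{k+1}(F)}(v\cdot n_{TF})$ and $(I_{U,F} v)\cdot n_{TF} = \pi_{\mathcal{P}^k(F)}(v\cdot n_{TF})$, exploiting orthogonality against $q_T|_F \in \mathcal{P}^k(F)$, and then cancelling the interior-face contributions---is the same computation as the paper's, organized only slightly more compactly into the single-valued face quantity $(v\cdot n_{TF})(q_F - \pi_{\mathcal{P}^k(F)} q_F)$. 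Your explicit remark that the boundary-face vanishing requires $v$ with zero Dirichlet trace is also present (implicitly) in the paper's argument.
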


\begin{proof}
  Let us consider Assumption~\ref{ass:PT.vs.UT} for a generic mesh element $T \in \mathcal{T}_h$.
  To check the first point, we start by noticing that $\nabla P_T \subset \mathcal{P}^{k-1}(T)^d \subset \mathcal{BDM}^{k+1}(T)$ and then apply $\pi_{\nabla P_T}$ to \eqref{eq:pi.I.BDM=pi} with $\ell = k+1$ to write $\pi_{\nabla P_T} \circ \pi_{\mathcal{P}^{k-1}(T)^d} \circ I_{U,T} = \pi_{\nabla P_T} \circ \pi_{\mathcal{P}^{k-1}(T)^d}$, notice that $\pi_{\nabla P_T} \circ \pi_{\mathcal{P}^{k-1}(T)^d} = \pi_{\nabla P_T}$ since $\nabla P_T$ is a subspace of $\mathcal{P}^{k-1}(T)^d$, and recall that the composition is associative.
  The second point in Assumption~\ref{ass:PT.vs.UT} is straightforward noticing that $\tr_F P_T = \mathcal{P}^k(F) = U_F \cdot n_{TF}$.
  
  Assumption~\ref{ass:UT.vs.PT} is trivial to check since we have, for all $T \in \mathcal{T}_h$, $\nabla \cdot \mathcal{BDM}^{k+1}(T) = \mathcal{P}^k(T) = P_T$ and $\mathcal{BDM}^{k+1}(T) \cdot n_{TF} = \mathcal{P}^{k+1}(F) = P_F$ for all $F \in \mathcal{F}_T$.

  Let $v \in H_0^1(\Omega)^d$.
  To check Assumption~\ref{ass:global.ibp}, we notice that, by \eqref{eq:I.BDM:F} with $\ell = k+1$, $I_{\mathcal{BDM},T}^{k+1} v \cdot n_{TF} = \pi_{\mathcal{P}^{k+1}(F)} (v \cdot n_{TF})$ for all $T \in \mathcal{T}_h$ and all $F \in \mathcal{F}_T$ and write
  \[
  \begin{aligned}
    \mathcal{E}_{{\rm ibp},h}(v,\underline{q}_h)
    &= \sum_{T \in \mathcal{T}_h} \sum_{F \in \mathcal{F}_T} \int_F \left( \cancel{\pi_{\mathcal{P}^{k+1}(F)}} (v \cdot n_{TF}) - \pi_{\mathcal{P}^k(F)^d} v \cdot n_{TF} \right)\,(q_F - q_T)
    \\
    & =
    \sum_{T \in \mathcal{T}_h} \sum_{F \in \mathcal{F}_T} \int_F ( v  - \pi_{\mathcal{P}^k(F)^d} v ) \cdot n_{TF} \,q_F
    - \sum_{T \in \mathcal{T}_h} \sum_{F \in \mathcal{F}_T} \int_F \left[ v \cdot n_{TF} - \pi_{\mathcal{P}^k(F)} (v \cdot n_{TF}) \right] q_T
    \\
    & =
    \sum_{F \in \mathcal{F}_h^{\rm i}} \int_F \llbracket ( v  - \pi_{\mathcal{P}^k(F)^d} v )\,q_F \rrbracket_F \cdot n_F
    + \sum_{F \in \mathcal{F}_h^{\rm b}} \int_F ( v  - \pi_{\mathcal{P}^k(F)^d} v )\cdot n_F\,q_F 
    \\
    &\quad
    - \sum_{T \in \mathcal{T}_h} \sum_{F \in \mathcal{F}_T} \int_F \left[ v \cdot n_{TF} - \pi_{\mathcal{P}^k(F)} (v \cdot n_{TF}) \right]\,q_T
    = 0,
    \end{aligned}
  \]
  where the cancellation of the projector in the first line is made possible by the fact that $q_F - q_T \in \mathcal{P}^{k+1}(F)$,
  while, in the third equality, we have denoted by $\llbracket \cdot \rrbracket_F$ the jump operator defined consistently with $n_F$ and used the fact that $\pi_{\mathcal{P}^k(F)^d}v \cdot n_{TF} = \pi_{\mathcal{P}^k(F)}(v \cdot n_{TF})$ since faces are planar (hence $n_{TF}$ is constant on $F$).
  The conclusion follows noticing that the jump vanishes since the quantity inside it is single-valued at interfaces for the first term,
  the fact that both $v$ and $\pi_{\mathcal{P}^k(F)^d} v$ vanish on boundary faces for the second term,
  and the fact that, by definition of $\pi_{\mathcal{P}^k(F)}$, $\left[ v \cdot n_{TF} - \pi_{\mathcal{P}^k(F)} (v \cdot n_{TF}) \right]$ is $L^2(F)$-orthogonal to $q_{T|F} \in \mathcal{P}^k(F)$ for the last term.
  
  The first point in Assumption~\ref{ass:SigmaT} follows noticing that $\nabla \cdot \Sigma_T \subset \mathcal{P}^{k-1}(T)^d \subset U_T$ and invoking \eqref{eq:pi.I.BDM=pi} with $\ell = k + 1$.
  The second point is straightforward observing that $\Sigma_T n_{TF} = \mathcal{P}^k(F)^d = U_F$.
  
  Finally, we have already noticed that Assumption~\ref{ass:sT} is verified.
  
  Let us now move to the estimates of the error components.
  The fact that $\mathcal{E}_p = 0$ is a consequence of Remark \ref{rem:pressure.robustness}.
  Concerning $\mathcal{E}_u$, we have
  \begin{equation}\label{eq:Eu:decomposition}
    \mathcal{E}_u^2
    = \sum_{T \in \mathcal{T}_h}
    h_T \| \nabla u - \pi_{\Sigma_T} (\nabla u) \|_{L^2(\partial T)^{d \times d}}^2
    + \sum_{T \in \mathcal{T}_h} s_T(\underline{I}_{U,T} u, \underline{I}_{U,T} u)
    \eqcolon \mathfrak{T}_1 + \mathfrak{T}_2.
  \end{equation}
  By the approximation properties of $\pi_{\Sigma_T} \in \{ \pi_{\nabla \mathcal{P}^{k+1}(T)^d}, \pi_{\mathcal{P}^k(T)^{d \times d}} \}$, $\mathfrak{T}_1 \lesssim h^{r+1} | u |_{H^{r+2}(\mathcal{T}_h)^d}$.
  To estimate the second term as $\mathfrak{T}_2 \lesssim h^{r+1} | u |_{H^{r+2}(\mathcal{T}_h)^d}$, we can reproduce verbatim the argument of \cite[Proposition~2.14]{Di-Pietro.Droniou:20}, which is not affected by the different choice of $U_T$ and $I_{U,T}$.
\end{proof}

\subsection{The Rhebergen--Wells method}\label{sec:rhebergen-wells}

Let $\mathcal{M}_h$ be as in the previous section and an integer $k \ge 1$ be fixed.
The Rhebergen--Wells method of \cite{Rhebergen.Wells:18} applied to the Stokes problem is based on the following spaces:
\[
U_T = \mathcal{BDM}^k(T)^d,\quad
U_F = \mathcal{P}^k(F)^d,\quad
P_T = \mathcal{P}^{k-1}(T),\quad
P_F = \mathcal{P}^k(F),
\]
and we let $I_{U,T} = I_{\mathcal{BDM},T}^k$ for all $T \in \mathcal{T}_h$.
The choice for the space $\Sigma_T$ is not explicit in the original paper, but we can take $\Sigma_T = \mathcal{P}^{k-1}(T)^{d\times d}$ or $\Sigma_T = \nabla \mathcal{P}^k(T)^d$ to fit the method into the abstract framework developed in the previous sections.
For any $T \in \mathcal{T}_h$, the original form of the local viscous bilinear form reads:
For all $(\underline{w}_T, \underline{v}_T) \in \underline{U}_T \times \underline{U}_T$,
\begin{equation}\label{eq:RW:aT}
  \begin{aligned}
    a_T(\underline{w}_T, \underline{v}_T)
    &= \int_T \nabla w_T : \nabla v_T
    + \sum_{F \in \mathcal{F}_T} \int_F (w_F - w_T) \cdot (\nabla v_T n_{TF})  
    + \sum_{F \in \mathcal{F}_T} \int_F (\nabla w_T n_{TF}) \cdot (v_F - v_T)
    \\
    &\quad
    + \eta \sum_{F \in \mathcal{F}_T} h_F^{-1} \int_F (w_F - w_T) \cdot (v_F - v_T),
  \end{aligned}
\end{equation}
where $\eta > 0$ is a stabilization parameter to be chosen large enough.
Noticing that both $\nabla w_T$ and $\nabla v_T$ belong to $\mathcal{P}^{k-1}(T)^{d \times d}$, we can use the definition \eqref{eq:ET'} of $E_T$ with $(\underline{v}_T, \tau)$ equal to $(\underline{w}_T, \nabla v_T)$ and $(\underline{v}_T, \nabla w_T)$ to express the second and third terms respectively as $\int_T (E_T \underline{w}_T - \nabla w_T): \nabla v_T$ and $\int_T \nabla w_T : (E_T \underline{v}_T - \nabla v_T)$.
Substituting into \eqref{eq:RW:aT} and rearranging, we get
\[
  a_T(\underline{w}_T, \underline{v}_T)
  = \int_T E_T \underline{w}_T : E_T \underline{v}_T
  - \int_T (E_T \underline{w}_T - \nabla w_T) : (E_T \underline{v}_T - \nabla v_T)
  + \eta \sum_{F \in \mathcal{F}_T} h_F^{-1} \int_F (w_F - w_T) \cdot (v_F - v_T).
\]
This brings us to the expression \eqref{eq:ah} with
\begin{equation}\label{eq:RW:sT}
  s_T(\underline{w}_T, \underline{v}_T)
  = - \int_T (E_T \underline{w}_T - \nabla w_T) : (E_T \underline{v}_T - \nabla v_T)
  + \eta \sum_{F \in \mathcal{F}_T} h_F^{-1} \int_F (w_F - w_T) \cdot (v_F - v_T).
\end{equation}

The results stated in the following theorem are consistent with the analysis carried out in \cite{Kirk.Rhebergen:19}.

\begin{theorem}[Properties of the Rhebergen--Wells method]\label{thm:RW}
  Let $\mathcal{M}_h$ denote a conforming simplicial mesh.  
  Then, assuming that $\eta > (d + 1) C_{\rm tr}^2$ with $C_{\rm tr}$ discrete trace inequality constant (cf., e.g., \cite[Lemma~1.46]{Di-Pietro.Ern:12}), the Rhebergen--Wells method verifies Assumptions \ref{ass:PT.vs.UT}--\ref{ass:sT}.
  Moreover, assuming that $(u,p) \in H_0^1(\Omega)^d \times L_0^2(\Omega)$ solving the Stokes problem \eqref{eq:stokes} satisfies the additional regularity $u \in H^{r+1}(\mathcal{T}_h)^d$ for some $r \in \{0, \ldots, k\}$ and $p \in H^1(\Omega)$, it holds, for the error components defined by \eqref{eq:error.components},
  \[
  \text{%
    $\mathcal{E}_u \lesssim h^r | u |_{H^{r+1}(\mathcal{T}_h)^d}$
    and $\mathcal{E}_p = 0$.
  }
  \]
\end{theorem}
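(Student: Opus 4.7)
The plan is to follow the same skeleton as the proof of Theorem~\ref{thm:botti-massa}, checking each of Assumptions~\ref{ass:PT.vs.UT}--\ref{ass:sT} in turn and then bounding $\mathcal{E}_u$ and $\mathcal{E}_p$. Most of the set inclusions are immediate from the polynomial degrees chosen: $\nabla P_T = \nabla \mathcal{P}^{k-1}(T) \subset \mathcal{P}^{k-2}(T)^d \subset \mathcal{P}^k(T)^d = U_T$ (so, applying \eqref{eq:pi.I.BDM=pi} with $\ell = k$ and composing with $\pi_{\nabla P_T}$, the commuting projection property in Assumption~\ref{ass:PT.vs.UT}.1 follows), and $\tr_F P_T = \mathcal{P}^{k-1}(F) \subset \mathcal{P}^k(F) = U_F \cdot n_{TF}$ gives Assumption~\ref{ass:PT.vs.UT}.2; Assumption~\ref{ass:UT.vs.PT} is direct since $\nabla\cdot\mathcal{P}^k(T)^d \subset \mathcal{P}^{k-1}(T) = P_T$ and $U_F \cdot n_{TF} = P_F$; for Assumption~\ref{ass:SigmaT}, with $\Sigma_T \in \{\mathcal{P}^{k-1}(T)^{d\times d}, \nabla\mathcal{P}^k(T)^d\}$ we have $\nabla\cdot\Sigma_T \subset \mathcal{P}^{k-2}(T)^d$ (use \eqref{eq:pi.I.BDM=pi} again) and $\Sigma_T n_{TF} \subset \mathcal{P}^{k-1}(F)^d \subset U_F$.

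For Assumption~\ref{ass:global.ibp}, the key observation is that $P_F = \mathcal{P}^k(F) = U_F\cdot n_{TF}$, so there is no mismatch between the orders of the face velocity and pressure: by \eqref{eq:I.BDM:F} with $\ell=k$, $I_{\mathcal{BDM},T}^k v \cdot n_{TF} = \pi_{\mathcal{P}^k(F)}(v\cdot n_{TF})$, and since $n_{TF}$ is constant on the planar face we also have $I_{U,F}v \cdot n_{TF} = \pi_{\mathcal{P}^k(F)^d}v \cdot n_{TF} = \pi_{\mathcal{P}^k(F)}(v\cdot n_{TF})$. Hence the integrand of $\mathcal{E}_{{\rm ibp},h}(v,\underline{q}_h)$ vanishes face-by-face, so $\delta = 0$.

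The main obstacle is Assumption~\ref{ass:sT}. I would first rewrite the definition of $a_T$ obtained from \eqref{eq:ah} with $s_T$ given by \eqref{eq:RW:sT}: expanding $|E_T \underline{v}_T - \nabla v_T|^2$ and using \eqref{eq:ET'} with $\tau = \nabla v_T \in \Sigma_T$ gives, after simplification, the classical symmetric-interior-penalty hybrid form
\begin{equation*}
  a_T(\underline{v}_T,\underline{v}_T)
  = \|\nabla v_T\|_{L^2(T)^{d\times d}}^2
  + 2\sum_{F \in \mathcal{F}_T} \int_F (v_F - v_T)\cdot (\nabla v_T\, n_{TF})
  + \eta \sum_{F \in \mathcal{F}_T} h_F^{-1} \|v_F - v_T\|_{L^2(F)^d}^2,
\end{equation*}
which coincides with \eqref{eq:RW:aT}. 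Coercivity $\|\underline{v}_T\|_{1,T}^2 \lesssim a_T(\underline{v}_T,\underline{v}_T)$ then follows by bounding the cross term with Cauchy--Schwarz, Young's inequality, and the discrete trace inequality $\|\nabla v_T\, n_{TF}\|_{L^2(F)^d} \le C_{\rm tr} h_F^{-1/2} \|\nabla v_T\|_{L^2(T)^{d\times d}}$ summed over the $d+1$ faces of the simplex; this is exactly where the threshold $\eta > (d+1) C_{\rm tr}^2$ is used to absorb constants. Continuity is routine from the same ingredients.

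For the error components, Assumptions~\ref{ass:UT.vs.PT} and~\ref{ass:global.ibp} yield $\mathcal{E}_p = 0$ via Remark~\ref{rem:pressure.robustness}. For $\mathcal{E}_u$, I would split as in \eqref{eq:Eu:decomposition}: the first term is bounded by standard approximation of $\pi_{\Sigma_T}$ applied to $\nabla u$ on $\partial T$, yielding $h^r |u|_{H^{r+1}(\mathcal{T}_h)^d}$ since $\Sigma_T \supset \mathcal{P}^{k-1}(T)^{d\times d}$ suffices to reach rate $h^r$ for $r \le k$. The stabilization contribution $s_T(\underline{I}_{U,T}u,\underline{I}_{U,T}u)$ must be handled with care because $s_T$ in \eqref{eq:RW:sT} contains a sign-indefinite volume piece: using Proposition~\ref{prop:ET:commutativity} to rewrite $E_T \underline{I}_{U,T} u - \nabla I_{\mathcal{BDM},T}^k u = \pi_{\Sigma_T}(\nabla u) - \nabla I_{\mathcal{BDM},T}^k u$ and the known approximation properties of $I_{\mathcal{BDM},T}^k$ (and of $I_{U,F} = \pi_{\mathcal{P}^k(F)^d}$ for the face term), both pieces are bounded by $h^{2r} |u|_{H^{r+1}(T)^d}^2$, completing the estimate. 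The conformity error term in \eqref{eq:Eu} is absent since $\delta = 0$.
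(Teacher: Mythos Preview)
Your proposal is correct and follows essentially the same route as the paper's proof: the verification of Assumptions~\ref{ass:PT.vs.UT}--\ref{ass:SigmaT} via the obvious polynomial inclusions and \eqref{eq:pi.I.BDM=pi}, the face-by-face vanishing of $\mathcal{E}_{{\rm ibp},h}$ from $I_{\mathcal{BDM},T}^k v\cdot n_{TF}=\pi_{\mathcal{P}^k(F)}(v\cdot n_{TF})=\pi_{\mathcal{P}^k(F)^d}v\cdot n_{TF}$, the reduction of $a_T$ to the symmetric interior-penalty form \eqref{eq:RW:aT} to obtain coercivity under $\eta>(d+1)C_{\rm tr}^2$, and the treatment of $\mathcal{E}_u$ via the decomposition \eqref{eq:Eu:decomposition} combined with $E_T\underline{I}_{U,T}u=\pi_{\Sigma_T}(\nabla u)$ and BDM approximation all match the paper. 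The only place where the paper is slightly more explicit is the face contribution in $s_T(\underline{I}_{U,T}u,\underline{I}_{U,T}u)$: it observes that $(I_{\mathcal{BDM},T}^k u)_{|F}\in\mathcal{P}^k(F)^d$, so $\|\pi_{\mathcal{P}^k(F)^d}u-I_{\mathcal{BDM},T}^k u\|_{L^2(F)^d}=\|\pi_{\mathcal{P}^k(F)^d}(u-I_{\mathcal{BDM},T}^k u)\|_{L^2(F)^d}\le\|u-I_{\mathcal{BDM},T}^k u\|_{L^2(F)^d}$, which is the step your sketch alludes to but does not spell out.
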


\begin{proof}
  The proof of Assumptions~\ref{ass:PT.vs.UT} and~\ref{ass:UT.vs.PT} is the same as in Theorem~\ref{thm:botti-massa} provided we replace $k$ with $k - 1$ (the only difference being that, this time, $\tr_F P_T = \mathcal{P}^{k-1}(F)$ is strictly contained in $U_F \cdot n_{TF}$ instead of coinciding with this space).

  To check Assumption~\ref{ass:global.ibp}, it suffices to notice that, for all $v \in H^1_0(\Omega)^d$, all $T \in \mathcal{T}_h$, and all $F \in \mathcal{F}_T$, $(I_U v)_{|F} \cdot n_{TF} - I_F v_{|F} \cdot n_{TF} \overset{\eqref{eq:I.BDM:F}}= \pi_{\mathcal{P}^k(F)} (v \cdot n_{TF}) - \pi_{\mathcal{P}^k(F)^d} v \cdot n_{TF} = 0$, where the last passage uses the fact that $n_{TF}$ is constant on $F$.

  The first point in Assumption~\ref{ass:SigmaT} is proved as in Theorem~\ref{thm:botti-massa} replacing $k$ with $k - 1$ while, for the second point, we write $\Sigma_T n_{TF} \subset \mathcal{P}^{k-1}(F)^d \subset U_F$.

  Finally, Assumption~\ref{ass:sT} can be proved for $\eta > (d + 1) C_{\rm tr}^2$ starting from the form \eqref{eq:RW:aT} and using standard techniques in the context of Discontinuous Galerkin methods; cf., e.g., \cite[Lemma~4.12]{Di-Pietro.Ern:12}.

  Let us now move to the estimates of the error components.
  The fact that $\mathcal{E}_p = 0$ is a consequence of Remark \ref{rem:pressure.robustness}.
  Concerning $\mathcal{E}_u$, we start from a decomposition similar to \eqref{eq:Eu:decomposition}.
  For the first term, we readily obtain the estimate $\mathfrak{T}_1 \lesssim h^r | u |_{H^{r+1}(\mathcal{T}_h)^d}$.
  To estimate the term involving the stabilization bilinear form, we write
  \[
  \begin{aligned}
    s_T(\underline{I}_{U,T} u, \underline{I}_{U,T} u)
    \overset{\eqref{eq:RW:sT}}&\le \| E_T \underline{I}_{U,T} u - \nabla I_{\mathcal{BDM},T}^k u \|_{L^2(T)^{d\times d}}^2
    + \eta \sum_{F \in \mathcal{F}_T} h_F^{-1} \| \pi_{\mathcal{P}^k(F)^d} u - I_{\mathcal{BDM},T}^k u \|_{L^2(F)^d}^2
    \\
    &\lesssim
    \| \pi_{\Sigma_T} \nabla u - \nabla u \|_{L^2(T)^{d\times d}}^2
    + \| \nabla (u -  I_{\mathcal{BDM},T}^k u ) \|_{L^2(T)^{d\times d}}^2
    \\
    &\quad
    + \sum_{F \in \mathcal{F}_T} h_F^{-1} \| u - I_{\mathcal{BDM},T}^k u \|_{L^2(F)^d}^2
    \lesssim h_T^r | u |_{H^{r+1}(T)^d},
  \end{aligned}
  \]
  where, to pass to the second line, we have inserted $\pm \nabla u$ and used a triangle inequality together with the fact that $E_T \underline{I}_{U,T} u = \pi_{\Sigma_T} \nabla u$ for the first term,
  and noticed that, since the trace on $F$ of $I_{\mathcal{BDM},T}^k u$ is in $\mathcal{P}^k(F)^d$,
  $ {\| \pi_{\mathcal{P}^k(F)^d} u - I_{\mathcal{BDM},T}^k u \|_{L^2(F)^d}} =  {\| \pi_{\mathcal{P}^k(F)^d} ( u - I_{\mathcal{BDM},T}^k u ) \|_{L^2(F)^d}} \le  {\| u - I_{\mathcal{BDM},T}^k u \|_{L^2(F)^d}}$.
  The fact that $\mathfrak{T}_2 \lesssim h^r | u |_{H^{r+1}(\mathcal{T}_h)^d}$ is then a consequence of standard approximation results for $\pi_{\Sigma_T} \in \{ \pi_{\nabla \mathcal{P}^k(T)^d}, \pi_{\mathcal{P}^{k-1}(T)^{d\times d}} \}$ (see, e.g., \cite[Theorem~1.45]{Di-Pietro.Droniou:20}) and $I_{\mathcal{BDM},T}^k$ (see \cite[Proposition~2.5.1]{Boffi.Brezzi.ea:13}).
\end{proof}

\subsection{A new method with Raviart--Thomas--N\'ed\'elec velocities at elements}\label{sec:applications:rtn-pk-pk-pk}

We assume again that $\mathcal{M}_h$ is a standard matching simplicial mesh belonging to a regular sequence and consider a new method based on the following component spaces for a given polynomial degree $k \ge 0$:
\[
  U_T = \mathcal{RTN}^{k+1}(T),\quad
  U_F = \mathcal{P}^k(F)^d,\quad
  P_T = \mathcal{P}^k(T),\quad
  P_F = \mathcal{P}^k(F),
\]
with $I_{U,T} = I_{\mathcal{RTN},T}^{k+1}$.
The space $\Sigma_T$ can be taken equal to $\nabla \mathcal{P}^{k+1}(T)^d$ or $\mathcal{P}^k(T)^{d\times d}$.
The stabilization is again given by \eqref{eq:sT:classical.hho} with velocity reconstruction in $W_T = \mathcal{P}^{k+1}(T)^d$.
Notice that, with this choice, we have, by \eqref{eq:delta.T.TF}, $\delta_T \underline{v}_T = I_{\mathcal{RTN},T}^{k+1} (r_T \underline{v}_T - v_T)$ for all $\underline{v}_T \in \underline{U}_T$, i.e., the Raviart--Thomas--N\'ed\'elec interpolator replaces the $L^2$-orthogonal projection usually present in HHO methods.

The fact that $s_T$ satisfies the standard assumptions of \cite[Chapter~2]{Di-Pietro.Droniou:20} on HHO stabilizations can be proved using standard techniques in the context of HHO methods.
For the polynomial consistency property, in particular, we use the fact that, recalling Remark \ref{rem:rT.avg.value}, $r_T \circ \underline{I}_{U,T}$ is a modified elliptic projector in the spirit of \cite[Definition~5.4]{Di-Pietro.Droniou:20}.

The fact that the map defined by \eqref{eq:norm.h} defines a norm on $\underline{U}_{h,0} \times \underline{P}_{h,0}$ can be proved in the same way as for the Botti--Massa method, noticing that it is still possible to select in \eqref{eq:botti-massa:norm.1.h} $v \in \mathcal{RTN}^{k+1}(T)$ such that $v \cdot n_F = q_F \in \mathcal{P}^k(F)$.

\begin{theorem}[Properties of the new method with Raviart--Thomas--N\'ed\'elec element velocity]\label{thm:rtn}
  Let $\mathcal{M}_h$ denote a conforming simplicial mesh.  
  Then, the method described in this section verifies Assumptions \ref{ass:PT.vs.UT}--\ref{ass:sT}.
  Moreover, assuming that the solution $(u,p) \in H_0^1(\Omega)^d \times L_0^2(\Omega)$ of the Stokes problem \eqref{eq:stokes} satisfies the additional regularity $u \in H^{r+2}(\mathcal{T}_h)^d$ for some $r \in \{0, \ldots, k\}$ and $p \in H^1(\Omega)$, it holds, for the error components defined by \eqref{eq:error.components},
  \[
  \text{%
    $\mathcal{E}_u \lesssim h^{r+1} | u |_{H^{r+2}(\mathcal{T}_h)^d}$
    and $\mathcal{E}_p = 0$.
  }
  \]
\end{theorem}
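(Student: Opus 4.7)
The plan is to verify Assumptions~\ref{ass:PT.vs.UT}--\ref{ass:sT} in order and then conclude via Lemma~\ref{lem:err.est} and its pressure-robust specialization in Remark~\ref{rem:pressure.robustness}. The argument runs largely parallel to the proof of Theorem~\ref{thm:botti-massa}, the main simplification being that the normal traces of $\mathcal{RTN}^{k+1}(T)$ coincide \emph{exactly} with $\mathcal{P}^k(F)$, whereas the BDM traces sit one degree higher; this will in fact make Assumption~\ref{ass:global.ibp} noticeably shorter than in the Botti--Massa case.

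For Assumption~\ref{ass:PT.vs.UT}, I use $\nabla P_T \subset \mathcal{P}^{k-1}(T)^d \subset \mathcal{RTN}^{k+1}(T)$ combined with \eqref{eq:pi.I.RTN=pi} applied at $\ell=k+1$ and then composed with $\pi_{\nabla P_T}$ for the first item, and $\tr_F P_T = \mathcal{P}^k(F) = U_F\cdot n_{TF}$ for the second. Assumption~\ref{ass:UT.vs.PT} is immediate from the standard identities $\nabla\cdot\mathcal{RTN}^{k+1}(T) = \mathcal{P}^k(T)$ and $\mathcal{RTN}^{k+1}(T)\cdot n_{TF} = \mathcal{P}^k(F)$, and Assumption~\ref{ass:SigmaT} follows as in Theorem~\ref{thm:botti-massa}: both admissible choices of $\Sigma_T$ satisfy $\nabla\cdot\Sigma_T\subset\mathcal{P}^{k-1}(T)^d$ (preserved by $I_{\mathcal{RTN},T}^{k+1}$ via \eqref{eq:pi.I.RTN=pi}) and $\Sigma_T n_{TF}\subset\mathcal{P}^k(F)^d = U_F$. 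The key step for Assumption~\ref{ass:global.ibp} is that, by \eqref{eq:I.RTN:F} with $\ell=k+1$ together with the fact that $I_{\mathcal{RTN},T}^{k+1}v\cdot n_{TF}\in\mathcal{P}^k(F)$,
\[
I_{\mathcal{RTN},T}^{k+1}v\cdot n_{TF} = \pi_{\mathcal{P}^k(F)}(v\cdot n_{TF}) = \pi_{\mathcal{P}^k(F)^d}v\cdot n_{TF} = I_{U,F}v\cdot n_{TF},
\]
where the middle equality uses that $n_{TF}$ is constant on the planar face $F$. Hence the integrand of $\mathcal{E}_{{\rm ibp},h}(v,\underline{q}_h)$ vanishes face by face, with no need to invoke the single-valuedness-of-jumps argument used for BDM.

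The main obstacle is Assumption~\ref{ass:sT} together with the estimate of $s_T(\underline{I}_{U,T}u,\underline{I}_{U,T}u)$ entering $\mathcal{E}_u$. Following the HHO blueprint of \cite[Chapter~2]{Di-Pietro.Droniou:20} I would establish polynomial consistency of $s_T$ and the norm equivalence $\|\cdot\|_{1,h}\simeq a_h(\cdot,\cdot)^{1/2}$. The subtlety, already flagged in the set-up and in Remark~\ref{rem:rT.avg.value}, is that $I_{\mathcal{RTN},T}^{k+1}$ preserves $\mathcal{P}^{k-1}(T)^d$-averages but \emph{not} the $\mathcal{P}^0(T)^d$-average when $k=0$; this activates the second branch of \eqref{eq:rT:closure} and makes $r_T\circ\underline{I}_{U,T}$ coincide with the modified elliptic projector of \cite[Definition~5.4]{Di-Pietro.Droniou:20}, whose approximation properties nevertheless give $s_T(\underline{I}_{U,T}u,\underline{I}_{U,T}u)\lesssim h_T^{2(r+1)}|u|_{H^{r+2}(T)^d}^2$. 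Combined with the standard $\pi_{\Sigma_T}$ trace estimate, and observing that the $\delta$-term in \eqref{eq:Eu} vanishes because Assumption~\ref{ass:global.ibp} holds, summation over elements yields $\mathcal{E}_u\lesssim h^{r+1}|u|_{H^{r+2}(\mathcal{T}_h)^d}$. Finally, $\mathcal{E}_p=0$ follows directly from Remark~\ref{rem:pressure.robustness} since both Assumption~\ref{ass:UT.vs.PT} and Assumption~\ref{ass:global.ibp} have been verified.
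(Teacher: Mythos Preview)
Your proposal is correct and matches the paper's own proof essentially point by point: the same substitutions $\mathcal{BDM}\to\mathcal{RTN}$ and \eqref{eq:pi.I.BDM=pi}$\to$\eqref{eq:pi.I.RTN=pi} for Assumptions~\ref{ass:PT.vs.UT} and~\ref{ass:SigmaT}, the same face-by-face vanishing of $(I_{U,T}v-I_{U,F}v)\cdot n_{TF}$ for Assumption~\ref{ass:global.ibp}, and the same recourse to the modified elliptic projector of \cite[Definition~5.4 and Theorem~5.7]{Di-Pietro.Droniou:20} to handle the $k=0$ case in the stabilization estimate. Your observation that the integrand of $\mathcal{E}_{{\rm ibp},h}$ vanishes on each face (rather than requiring the single-valuedness-of-jumps argument used for BDM) is precisely the simplification the paper exploits.
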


\begin{remark}[Comparison with the Rhebergen--Wells method]
  After static condensation of the element unknowns (with the possible exception of one pressure unknown per element), the method presented in this section yields linear systems with analogous size and pattern as the method of Rhebergen--Wells of Section~\ref{sec:rhebergen-wells}.
  However, the present method converges in $h^{k+1}$ as opposed to $h^k$.
  This results from two important differences: first, the fact that we use a slightly larger space for the element velocity ($\mathcal{RTN}^{k+1}(T)$ as opposed to $\mathcal{P}^k(T)^d$); second, the fact that we use an HHO-type viscous stabilization, cf. \cite{Cockburn.Di-Pietro.ea:16} on this subject.
\end{remark}

\begin{proof}[Proof of Theorem~\ref{thm:rtn}]
  The proof Assumption~\ref{ass:PT.vs.UT} for a given mesh element $T \in \mathcal{T}_h$ is obtained repeating the proof of the corresponding point in Theorem~\ref{thm:botti-massa} with the following substitutions: $\mathcal{BDM}^{k+1}(T) \gets \mathcal{RTN}^{k+1}(T)$ and
  $\eqref{eq:pi.I.BDM=pi} \gets \eqref{eq:pi.I.RTN=pi}$.
  Assumption~\ref{ass:UT.vs.PT} follows noticing that, for all $T \in \mathcal{T}_h$, $\nabla \cdot \mathcal{RTN}^{k+1}(T) = \mathcal{P}^k(T) = P_T$ and, for all $F \in \mathcal{F}_T$, $\mathcal{RTN}^{k+1}(T) \cdot n_F = \mathcal{P}^k(F) = P_F$.
  Assumption~\ref{ass:global.ibp} immediately follows noticing that, for all $v \in H^1_0(\Omega)^d$, all all $T \in \mathcal{T}_h$, and all $F \in \mathcal{F}_T$, $I_{U,T} v \cdot n_{TF} \overset{\eqref{eq:I.RTN:F}}= \pi_{\mathcal{P}^k(F)} (v \cdot n_{TF}) = \pi_{\mathcal{P}^k(F)^d}v \cdot n_{TF} = I_{U,F} v \cdot n_{TF}$, where the second equality holds since $n_{TF}$ is constant over $F$.
  The proof of Assumption~\ref{ass:SigmaT} is identical to the one given in Theorem \ref{thm:botti-massa} provided \eqref{eq:pi.I.BDM=pi} is replaced by \eqref{eq:pi.I.RTN=pi}.
  The fact that Assumption~\ref{ass:sT} holds has already been observed right after the definition of $s_T$.
  Finally, the estimates of the error components are essentially identical to the ones given in Theorem \ref{thm:botti-massa}, the only difference being that the approximation properties of the modified elliptic projector stated in \cite[Theorem~5.7]{Di-Pietro.Droniou:20} have to be invoked when $k = 0$ to estimate the term involving $s_T$.
  This concludes the proof.
\end{proof}

\begin{remark}[Brezzi--Douglas--Fortin--Marini velocities at elements for rectangular meshes]\label{rem:bdfm_mixedmeshes}
When $\mathcal{M}_h$ is a matching rectangular mesh, the previous analysis applies without modifications to the space choice
  \[
  U_T = \mathcal{BDFM}^{k+1}(T),\quad
  U_F = \mathcal{P}^k(F)^d,\quad
  P_T = \mathcal{P}^k(T),\quad
  P_F = \mathcal{P}^k(F),
  \]
  with polynomial degree $k \ge 0$ and $I_{U,T} = I_{\mathcal{BDFM},T}^{k+1}$ defined in \eqref{eq:I.BDFM}, under the assumption that $\mathcal{M}_h$ is a rectangular mesh belonging to a regular sequence. The velocity reconstruction for defining the stabilization \eqref{eq:sT:classical.hho} is again taken in $W_T = \mathcal{P}^{k+1}(T)^d$, while the velocity gradient reconstruction can be taken in $\nabla \mathcal{P}^{k+1}(T)^d$ or $\mathcal{P}^k(T)^{d\times d}$.
\end{remark}

\begin{remark}[A variant with full polynomial velocities at elements]\label{rem:pkpo-pk-pk-pk}
  A variant of the method with $U_T = \mathcal{P}^{k+1}(T)^d$ (still keeping $I_{U,T} = I_{\mathcal{RTN},T}^{k+1}$ if $T$ is a simplex and $I_{U,T} = I_{\mathcal{BDFM},T}^{k+1}$ if $T$ is rectangular) is obtained using the following viscous stabilisation:
  \begin{multline}\label{eq:rtn:Pk+1}
    s_T(\underline{w}_T, \underline{v}_T)
    \coloneq
    \lambda_T h_T^{-2}\int_T \delta_T\underline{w}_T \cdot \delta_T\underline{v}_T
    + \sum_{F \in \mathcal{F}_T} h_F^{-1} \int_F \delta_{TF}\underline{w}_T \cdot \delta_{TF}\underline{v}_T    
    \\
    + \boxed{
      \nu^{-1} \lambda_T h_T^{-2} \int_T (w_T - I_{U,T} w_T) \cdot (v_T - I_{U,T} v_T).
    }
  \end{multline}
  The proof that Assumptions \ref{ass:PT.vs.UT}--\ref{ass:sT} are verified is analogous to the one in Theorem~\ref{thm:rtn}.
  In particular, the fact that, for all $w \in H^{r+2}(T)^d$ with $r \in \{0,\ldots k\}$,
  $
  s_T(\underline{I}_{U,T} w, \underline{v}_T)
  \lesssim h^{r+1} | w |_{H^{r+2}(T)^d} \| \underline{v}_T \|_{1,T}
  $
  follows noticing that the boxed term in \eqref{eq:rtn:Pk+1} vanishes when $\underline{w}_T = \underline{I}_{U,T} w$.
\end{remark}

\subsection{A new method on general polytopal meshes}\label{sec:applications:polytopal}

To close this section, we consider a method on general polytopal meshes with spaces
\[ %% \begin{equation}\label{eq:polytopal:spaces}
  U_T = \mathcal{BDM}^{k+1}(T),\quad
  U_F = \mathcal{P}^k(F)^d,\quad
  P_T = \mathcal{P}^k(T),\quad
  P_F = \mathcal{P}^k(F),
\] %% \end{equation}
and $I_{U,T} \coloneq \pi_{\mathcal{P}^{k+1}(T)^d}$.
For $\Sigma_T$, we can take $\nabla \mathcal{P}^{k+1}(T)^d$ or $\mathcal{P}^k(T)^{d\times d}$.
The stabilization is again \eqref{eq:sT:classical.hho}, with velocity reconstruction in $W_T = \mathcal{P}^{k+1}(T)^d$.
This corresponds to the standard HHO discretization of viscous terms.

\begin{theorem}[Properties of the polytopal method]
  Let $\mathcal{M}_h$ denote a general polytopal mesh belonging to a regular refined sequence, in the sense made precise in \cite[Chapter~1]{Di-Pietro.Droniou:20}.
  Then, the method described in this section verifies Assumptions \ref{ass:PT.vs.UT}, \ref{ass:UT.vs.PT}, \ref{ass:SigmaT}, and \ref{ass:sT}.
  Moreover, assuming that the weak solution $(u,p) \in H_0^1(\Omega)^d \times L_0^2(\Omega)$ of the Stokes problem \eqref{eq:stokes} satisfies the additional regularity $u \in H^{r+2}(\mathcal{T}_h)^d$ and $p \in H^1(\Omega) \cap H^{r+1}(\mathcal{T}_h)$ for some $r \in \{0, \ldots, k\}$, it holds, for the error components defined by \eqref{eq:error.components},
  \begin{equation}\label{eq:error.estimates:polytopal}
    \text{%
      $\mathcal{E}_u \lesssim h^{r+1} | u |_{H^{r+2}(\mathcal{T}_h)^d}$
      and $\mathcal{E}_p \lesssim h^{r+1} | p |_{H^{r+1}(\mathcal{T}_h)}$.
    }
  \end{equation}
\end{theorem}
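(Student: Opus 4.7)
The plan is to proceed in two stages: first verify Assumptions \ref{ass:PT.vs.UT}, \ref{ass:UT.vs.PT}, \ref{ass:SigmaT}, and \ref{ass:sT}, and then bound the two error components $\mathcal{E}_u$ and $\mathcal{E}_p$. A key structural observation is that, unlike in Theorem~\ref{thm:botti-massa}, Assumption~\ref{ass:global.ibp} is \emph{not} claimed here, so $\delta = 1$ and the pressure stabilization contribution in $\mathcal{E}_p$ as well as the conformity term in $\mathcal{E}_u$ must be controlled directly.

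For the assumptions, the verifications are essentially algebraic. Since $I_{U,T} = \pi_{\mathcal{P}^{k+1}(T)^d}$ is the $L^2$-orthogonal projector on $U_T$, the commuting properties in the first points of Assumptions~\ref{ass:PT.vs.UT} and \ref{ass:SigmaT} follow from the standard fact that composing an $L^2$-projector with a projector onto a subspace collapses to the latter, combined with the inclusions $\nabla P_T \subset \mathcal{P}^{k-1}(T)^d \subset U_T$ and $\nabla \cdot \Sigma_T \subset \mathcal{P}^{k-1}(T)^d \subset U_T$. The second points of these assumptions as well as Assumption~\ref{ass:UT.vs.PT} are immediate polynomial inclusions at faces, using the equalities $U_F \cdot n_{TF} = \mathcal{P}^k(F) = P_F$ and $\Sigma_T n_{TF} \subset \mathcal{P}^k(F)^d = U_F$. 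Assumption~\ref{ass:sT} is the standard HHO norm equivalence on polytopal meshes established in \cite[Chapter~2]{Di-Pietro.Droniou:20}.

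For the error estimate on the pressure, the first contribution in $\mathcal{E}_p$ vanishes thanks to the commutativity \eqref{eq:GT:commutativity}, which is available since Assumption~\ref{ass:UT.vs.PT} holds. The remaining contribution $|\underline{I}_{P,h} p|_{0,h}$ is controlled by inserting $\pm p$ inside $\|I_{P,F} p - I_{P,T} p\|_{L^2(F)}$, using the fact that $\pi_{\mathcal{P}^k(F)}$ is the best $L^2(F)$-approximation (so $\|p - \pi_{\mathcal{P}^k(F)} p\|_{L^2(F)} \le \|p - \pi_{\mathcal{P}^k(T)} p\|_{L^2(F)}$), and invoking standard trace approximation estimates for $\pi_{\mathcal{P}^k(T)}$ on polytopal elements (\cite[Theorem~1.45]{Di-Pietro.Droniou:20}) to get $\|p - \pi_{\mathcal{P}^k(T)} p\|_{L^2(F)} \lesssim h_T^{r + 1/2} |p|_{H^{r+1}(T)}$. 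The factor $h_T$ in the definition of $|\cdot|_{0,T}$ then produces the required $h^{r+1}$ rate.

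For the error estimate on the velocity, the decomposition \eqref{eq:Eu:decomposition} is supplemented by the conformity term $\sup_{|\underline{q}_h|_{0,h}=1} \mathcal{E}_{{\rm ibp},h}(u,\underline{q}_h)$. The first two terms in the decomposition are handled exactly as in Theorem~\ref{thm:botti-massa}, relying on the approximation properties of $\pi_{\Sigma_T}$ and on the standard polytopal HHO stabilization analysis of \cite[Proposition~2.14]{Di-Pietro.Droniou:20}. For the conformity term, the main obstacle and the genuinely new calculation, I would apply a Cauchy--Schwarz inequality to obtain
\[
| \mathcal{E}_{{\rm ibp},h}(u, \underline{q}_h) |
\le
\left(
\sum_{T \in \mathcal{T}_h} \sum_{F \in \mathcal{F}_T} h_T^{-1} \| \pi_{\mathcal{P}^{k+1}(T)^d} u - \pi_{\mathcal{P}^k(F)^d} u \|_{L^2(F)^d}^2
\right)^{\frac12}
| \underline{q}_h |_{0,h},
\]
and then bound each face term by inserting $\pm u$, using once again that $\pi_{\mathcal{P}^k(F)^d} u$ is the $L^2(F)$-best approximation, together with the polytopal trace approximation estimate applied to $\pi_{\mathcal{P}^{k+1}(T)^d} u$ to get the bound $\lesssim h_T^{r+3/2} |u|_{H^{r+2}(T)^d}$. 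The multiplication by $h_T^{-1/2}$ yields $h_T^{r+1}$, giving the required estimate on $\mathcal{E}_u$. Plugging these two estimates into \eqref{eq:err.est} through \eqref{eq:error.components} concludes the proof of \eqref{eq:error.estimates:polytopal}.
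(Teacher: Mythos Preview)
Your plan is largely correct and follows the same route as the paper: verify the four assumptions algebraically, kill the first pressure term via the commutation \eqref{eq:GT:commutativity}, bound $|\underline{I}_{P,h}p|_{0,h}$ by best approximation and trace estimates, and treat the first two velocity terms via standard HHO arguments. The treatment of $\mathcal{E}_p$ is fine, since both $I_{P,T}$ and $I_{P,F}$ project onto polynomials of the \emph{same} degree $k$, so $\|p-\pi_{\mathcal{P}^k(F)}p\|_{L^2(F)}\le\|p-\pi_{\mathcal{P}^k(T)}p\|_{L^2(F)}$ indeed holds.

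There is, however, a genuine gap in your handling of the conformity term $\mathcal{E}_{{\rm ibp},h}(u,\underline q_h)$. After Cauchy--Schwarz you are left with $\|\pi_{\mathcal{P}^{k+1}(T)^d}u-\pi_{\mathcal{P}^k(F)^d}u\|_{L^2(F)}$, and you propose to insert $\pm u$ and invoke best approximation ``once again''. But this time the two projectors have \emph{different} degrees: $(\pi_{\mathcal{P}^{k+1}(T)^d}u)_{|F}\in\mathcal{P}^{k+1}(F)^d$, not $\mathcal{P}^k(F)^d$, so best approximation does not let you bound $\|u-\pi_{\mathcal{P}^k(F)^d}u\|_{L^2(F)}$ by $\|u-\pi_{\mathcal{P}^{k+1}(T)^d}u\|_{L^2(F)}$. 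The sharpest bound you can get this way is $\|u-\pi_{\mathcal{P}^k(T)^d}u\|_{L^2(F)}\lesssim h_T^{\min(r+2,k+1)-1/2}$, which for $r=k$ gives only $h_T^{k+1/2}$, one power short of the required $h_T^{k+3/2}$. (A concrete obstruction: if $u\in\mathcal{P}^{k+1}(T)^d$ then $|u|_{H^{k+2}(T)}=0$ while $\|u-\pi_{\mathcal{P}^k(F)^d}u\|_{L^2(F)}\neq 0$ in general.)

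The fix, and this is what the paper does, is to use the structure of the test function \emph{before} applying Cauchy--Schwarz: since $(q_F-q_T)_{|F}\in\mathcal{P}^k(F)$ and $n_{TF}$ is constant, one may freely insert $\pi_{\mathcal{P}^k(F)^d}$ in front of the velocity factor in the face integral. By linearity and idempotency this replaces $\pi_{\mathcal{P}^k(F)^d}u$ with $u$, leaving only $\|\pi_{\mathcal{P}^k(F)^d}(\pi_{\mathcal{P}^{k+1}(T)^d}u-u)\|_{L^2(F)}\le\|\pi_{\mathcal{P}^{k+1}(T)^d}u-u\|_{L^2(F)}$, which carries the full rate $h_T^{r+3/2}|u|_{H^{r+2}(T)^d}$ for all $r\le k$.
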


\begin{proof}
  The proof of Assumptions \ref{ass:PT.vs.UT}, \ref{ass:UT.vs.PT}, and \ref{ass:SigmaT} is trivial given the choice of spaces and the fact that $I_{U,T}$ is taken equal to the $L^2$-orthogonal projector on $\mathcal{P}^{k+1}(T)^d$.
  Assumption \ref{ass:global.ibp} is not verified in general, which requires a pressure stabilization term in the scheme.
  Assumption \ref{ass:sT} is proved in \cite[Chapter~5]{Di-Pietro.Droniou:20}.

  Let us prove the estimate $\mathcal{E}_u$.
  The fact that the first two contributions in the right-hand side of \eqref{eq:Eu} are $\lesssim | u |_{H^{r+2}(\mathcal{T}_h)^d}$ is completely standard in the context of HHO methods.
  Let us focus on the last contribution.
  Let $\underline{q}_h \in \underline{P}_h$.
  Carrying out the appropriate substitutions in \eqref{eq:global.ibp}, we have
  \[
  \begin{aligned}
    \mathcal{E}_{{\rm ibp},h}(u, \underline{q}_h)
    &= \sum_{T \in \mathcal{T}_h} \sum_{F \in \mathcal{F}_T} \int_F (\pi_{\mathcal{P}^{k+1}(T)^d} u - \pi_{\mathcal{P}^k(F)^d} u) \cdot n_{TF} (q_F - q_T)
    \\
    &= \sum_{T \in \mathcal{T}_h} \sum_{F \in \mathcal{F}_T} \int_F \pi_{\mathcal{P}^k(F)^d}(\pi_{\mathcal{P}^{k+1}(T)^d} u -  \cancel{\pi_{\mathcal{P}^k(F)^d}} u) \cdot n_{TF} (q_F - q_T)
    \\
    &\le \sum_{T \in \mathcal{T}} \sum_{F \in \mathcal{F}_T} h_F^{-\frac12} \| \pi_{\mathcal{P}^{k+1}(T)^d} u -  u \|_{L^2(F)^d}\,h_F^{\frac12} \| q_F - q_T \|_{L^2(F)}
    \\
    &\lesssim h^{r+1} | u |_{H^{r+2}(\mathcal{T}_h)^d}\, | \underline{q}_h |_{0,h},
  \end{aligned}
  \]
  where we have used the fact that $q_F - q_{T|F} \in \mathcal{P}^k(F)$ to insert $\pi_{\mathcal{P}^k(F)^d}$ in front of the first parentheses and its linearity and idempotency to cancel it inside them in the second line,
  $(2,\infty,2)$-H\"{o}lder inequalities followed by $\| n_{TF} \|_{L^\infty(F)^d} \le 1$ and the $L^2$-continuity of $\pi_{\mathcal{P}^k(F)^d}$ in the third line,
  and the approximation properties of $\pi_{\mathcal{P}^{k+1}(T)^d}$ together with the definition \eqref{eq:seminorm.0h} of $| \cdot |_{0,h}$ to conclude.
  Thus,
  $\sup_{\underline{q}_h \in \underline{P}_h,\, | \underline{q}_h |_{0,h} = 1} \mathcal{E}_{{\rm ibp},h}(u,\underline{q}_h)
  \lesssim h^{r+1} | u |_{H^{r+2}(\mathcal{T}_h)^d}$,
  which concludes the proof of the estimate of $\mathcal{E}_u$ in \eqref{eq:error.estimates:polytopal}.
  To estimate $\mathcal{E}_p$, it suffices to use the linearity and idempotency of $\pi_{\mathcal{P}^k(F)}$ followed by its $L^2$-continuity as above to write
  \[
  | \underline{I}_{P,h} p |_{0,h}^2
  = \sum_{T \in \mathcal{T}_h} h_T \sum_{F \in \mathcal{F}_T} \| \pi_{\mathcal{P}^k(F)} (p - \pi_{\mathcal{P}^k(T)} p )\|_{L^2(F)}^2
  \le \sum_{T \in \mathcal{T}_h} h_T \sum_{F \in \mathcal{F}_T} \| p - \pi_{\mathcal{P}^k(T)} p \|_{L^2(F)}^2
  \]
  and conclude using the approximation properties of $\pi_{\mathcal{P}^k(T)}$.
  This gives $\mathcal{E}_p \lesssim h^{r+1} | p |_{H^{r+1}(\mathcal{T}_h)}$ and concludes the proof.
\end{proof}

\section{Numerical examples}\label{sec:numerical.examples}

We provide here a numerical validation of the properties of some of the methods described in Section~\ref{sec:applications}.
For $\Omega = (0,1)^2$, we consider the following manufactured analytical solution, originally proposed in \cite{Lederer.Linke.ea:17}:
\[
u(x,y) = \begin{pmatrix}
  x^2 (x - 1)^2 (4 y^3 - 6 y^2 + 2y) \\
  -y^2 (y - 1)^2 (4 x^3 - 6 x^2 + 2 x)
\end{pmatrix},\qquad
p(x,y) = x^7 + y^7 - \frac14.
\]
The momentum forcing term $f$ is inferred from the previous expressions and homogeneous Dirichlet boundary conditions are enforced.
The average value of the pressure is enforced by means of the Lagrange multipliers method.
For the Botti--Massa method of Section~\ref{sec:applications:botti-massa} and the new method with Raviart--Thomas--Nédélec velocities at elements of Section~\ref{sec:applications:rtn-pk-pk-pk}, we solve this problem on a sequence of triangular meshes obtained by uniform refinement of the one depicted in Figure~\ref{fig:meshes:tri}.
For the polygonal method of Section~\ref{sec:applications:polytopal}, we additionally consider the Cartesian, locally refined, and hexagonal meshes in Figure~\ref{fig:meshes:cart}--\ref{fig:meshes:hexa}.
We monitor the discrete error components $\| \underline{u}_h - \underline{I}_{U,h} u \|_{1,h}$,
$\| u_h - I_{U,h} u \|_{L^2(\Omega)^2}$ (with $(I_{U,h} u)_{|T} \coloneq I_{U,T} u$ for all $T \in \mathcal{T}_h$),
and $\| p_h - I_{P,h} p \|_{L^2(\Omega)}$ (with again $(I_{P,h} p)_{|T} \coloneq I_{P,T} p$ for all $T \in \mathcal{T}_h$).
For the velocity, we additionally consider the errors between the exact solution and the global reconstruction $r_h \underline{u}_h$ such that $(r_h \underline{u}_h)_{|T} \coloneq r_T \underline{u}_T$ for all $T \in \mathcal{T}_h$.
The results collected in Tables~\ref{tab:botti-massa}, \ref{tab:rtn-pk-pk-pk}, and \ref{tab:polygonal} show that these error quantities converge at the expected rates, i.e., $H^1$-like errors on the velocity and $L^2$-like errors on the pressure as $h^{k+1}$, and $L^2$-like errors on the velocity as $h^{k+2}$.
A slight superconvergence of the $L^2$-errors on the pressure (with a rate of $\approx 1.4$ instead of $1$) is observed for the lowest-order case $k = 0$ for all the considered methods.
Numerical results not reported here for the sake of brevity also show that $H^1$-like errors on the pressure converge as $h^k$ (again with a slight superconvergence for $k = 0$).
For the methods of Sections~\ref{sec:applications:botti-massa} and \ref{sec:applications:rtn-pk-pk-pk}, we check pressure-robustness by letting the viscosity $\nu$ vary across 9 orders of magnitude, from $1$ down to $10^{-9}$.
As expected (cf. Remark~\ref{rem:pressure.robustness}), the results in Tables~\ref{tab:botti-massa} and \ref{tab:rtn-pk-pk-pk} show that the errors on the velocity are unaffected from the variations of the viscosity. Indeed, as remarked in \cite{Lederer.Linke.ea:17}, the irrotational and divergence-free part of the momentum forcing term $f$ dominates for $\nu<1$ and $\nu>1$, respectively. 
Notice that, in order to achieve pressure robustness in the vanishing viscosity limit, we needed to increase the degree of exactness of quadrature rules employed for the numerical integration of the forcing term $f$. 
We have also numerically tested the variant of Remark~\ref{rem:pkpo-pk-pk-pk}, which gives results that are essentially identical to those of Table~\ref{tab:rtn-pk-pk-pk}, and are therefore not reported in detail for the sake of conciseness.

\begin{figure}\centering
  \begin{minipage}{0.225\textwidth}\centering
    \includegraphics[height=3.25cm]{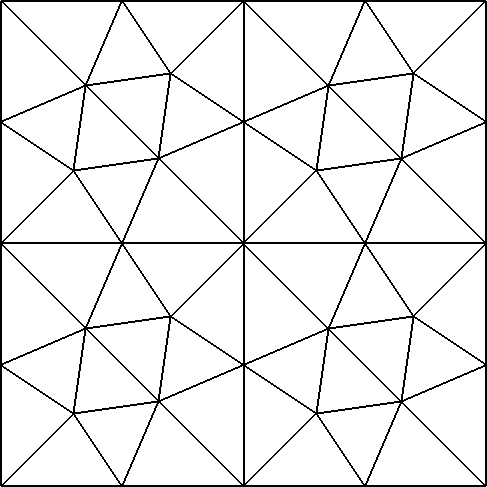}
    \subcaption{``tri''\label{fig:meshes:tri}}
  \end{minipage}
  \begin{minipage}{0.225\textwidth}\centering
    \includegraphics[height=3.25cm]{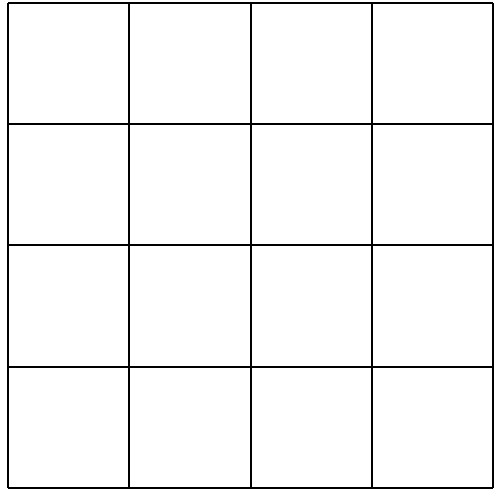}
    \subcaption{``cart''\label{fig:meshes:cart}}
  \end{minipage}    
  \begin{minipage}{0.225\textwidth}\centering
    \includegraphics[height=3.25cm]{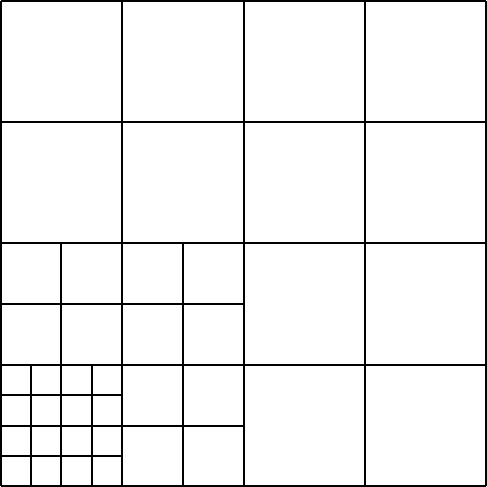}
    \subcaption{``locref''\label{fig:meshes:locref}}
  \end{minipage}        
  \begin{minipage}{0.225\textwidth}\centering
    \includegraphics[height=3.25cm]{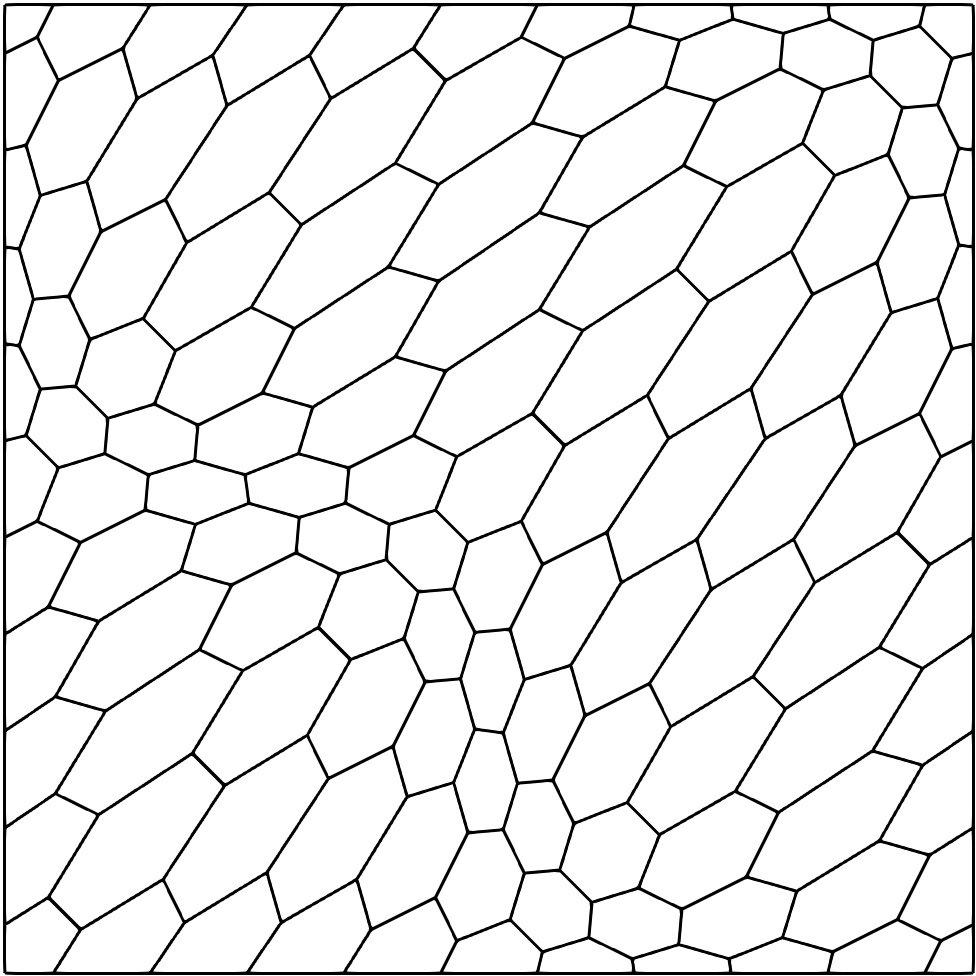}
    \subcaption{``hexa''\label{fig:meshes:hexa}}
  \end{minipage}
  \caption{Mesh families used in the numerical tests of Section~\ref{sec:numerical.examples}.\label{fig:meshes}}
\end{figure}

\begin{table}
  \begin{adjustbox}{width=1\textwidth}
    \begin{tabular}{ccccccccccccc}
  \toprule
  $h$
  & $k$
  & System size
  & $\| \underline{u}_h - \underline{I}_{U,h} u \|_{1,h}$ & OCV
  & $\| \nabla_h (r_h \underline{u}_h - u) \|_{L^2(\Omega)^{d\times d}}$ & OCV
  & $\| u_h - I_{U,h} u \|_{L^2(\Omega)^d}$ & OCV
  & $\| r_h \underline{u}_h - u \|_{L^2(\Omega)^d}$ & OCV
  & $\| p_h - I_{P,h} p \|_{L^2(\Omega)}$ & OCV
  \\
  \midrule
\multicolumn{13}{c}{$ \nu = 1 $} \\ 
\midrule
0.25 & 0 & 393& 3.222194e-02 & --& 2.358650e-02 & --& 3.157105e-03 & --& 1.079341e-03 & --& 8.353185e-03 & --\\ 
0.125 & 0 & 1569& 1.568367e-02 & 1.04& 1.176665e-02 & 1.00& 8.130095e-04 & 1.96& 3.173216e-04 & 1.77& 3.715999e-03 & 1.17\\ 
0.0625 & 0 & 6273& 7.890606e-03 & 0.99& 5.860134e-03 & 1.01& 2.087535e-04 & 1.96& 9.097616e-05 & 1.80& 1.359228e-03 & 1.45\\ 
0.03125 & 0 & 25089& 3.978745e-03 & 0.99& 2.918938e-03 & 1.01& 5.300457e-05 & 1.98& 2.430899e-05 & 1.90& 4.747372e-04 & 1.52\\ 
0.015625 & 0 & 100353& 1.999853e-03 & 0.99& 1.456954e-03 & 1.00& 1.335882e-05 & 1.99& 6.269384e-06 & 1.96& 1.753818e-04 & 1.44\\ 
\midrule
0.25 & 1 & 637& 1.261567e-02 & --& 4.381984e-03 & --& 4.005323e-04 & --& 1.423174e-04 & --& 2.270947e-03 & --\\ 
0.125 & 1 & 2561& 3.012656e-03 & 2.07& 1.239913e-03 & 1.82& 5.094272e-05 & 2.97& 2.011596e-05 & 2.82& 6.069516e-04 & 1.90\\ 
0.0625 & 1 & 10273& 7.423158e-04 & 2.02& 3.140934e-04 & 1.98& 6.487856e-06 & 2.97& 2.558775e-06 & 2.97& 1.515679e-04 & 2.00\\ 
0.03125 & 1 & 41153& 1.846133e-04 & 2.01& 7.882465e-05 & 1.99& 8.261358e-07 & 2.97& 3.225663e-07 & 2.99& 3.765641e-05 & 2.01\\ 
0.015625 & 1 & 164737& 4.606087e-05 & 2.00& 1.974435e-05 & 2.00& 1.046304e-07 & 2.98& 4.050395e-08 & 2.99& 9.359506e-06 & 2.01\\ 
\midrule
0.25 & 2 & 881& 4.901561e-03 & --& 6.467589e-04 & --& 7.339555e-05 & --& 1.163225e-05 & --& 3.628231e-04 & --\\ 
0.125 & 2 & 3553& 5.784949e-04 & 3.08& 8.006862e-05 & 3.01& 4.193709e-06 & 4.13& 7.971323e-07 & 3.87& 4.640113e-05 & 2.97\\ 
0.0625 & 2 & 14273& 6.973436e-05 & 3.05& 9.896615e-06 & 3.02& 2.512826e-07 & 4.06& 5.200708e-08 & 3.94& 5.795639e-06 & 3.00\\ 
0.03125 & 2 & 57217& 8.605670e-06 & 3.02& 1.246030e-06 & 2.99& 1.554178e-08 & 4.02& 3.388334e-09 & 3.94& 7.286071e-07 & 2.99\\ 
0.015625 & 2 & 229121& 1.070946e-06 & 3.01& 1.570128e-07 & 2.99& 9.699535e-10 & 4.00& 2.175903e-10 & 3.96& 9.142654e-08 & 2.99\\ 
\midrule
\multicolumn{13}{c}{$ \nu = 0.001 $} \\ 
\midrule
0.25 & 0 & 393& 3.222194e-02 & --& 2.358650e-02 & --& 3.157105e-03 & --& 1.079341e-03 & --& 8.353185e-06 & --\\ 
0.125 & 0 & 1569& 1.568367e-02 & 1.04& 1.176665e-02 & 1.00& 8.130095e-04 & 1.96& 3.173216e-04 & 1.77& 3.715999e-06 & 1.17\\ 
0.0625 & 0 & 6273& 7.890606e-03 & 0.99& 5.860134e-03 & 1.01& 2.087535e-04 & 1.96& 9.097616e-05 & 1.80& 1.359228e-06 & 1.45\\ 
0.03125 & 0 & 25089& 3.978745e-03 & 0.99& 2.918938e-03 & 1.01& 5.300457e-05 & 1.98& 2.430899e-05 & 1.90& 4.747372e-07 & 1.52\\ 
0.015625 & 0 & 100353& 1.999853e-03 & 0.99& 1.456954e-03 & 1.00& 1.335882e-05 & 1.99& 6.269384e-06 & 1.96& 1.753818e-07 & 1.44\\ 
\midrule
0.25 & 1 & 637& 1.261567e-02 & --& 4.381984e-03 & --& 4.005323e-04 & --& 1.423174e-04 & --& 2.270947e-06 & --\\ 
0.125 & 1 & 2561& 3.012656e-03 & 2.07& 1.239913e-03 & 1.82& 5.094272e-05 & 2.97& 2.011596e-05 & 2.82& 6.069516e-07 & 1.90\\ 
0.0625 & 1 & 10273& 7.423158e-04 & 2.02& 3.140934e-04 & 1.98& 6.487856e-06 & 2.97& 2.558775e-06 & 2.97& 1.515679e-07 & 2.00\\ 
0.03125 & 1 & 41153& 1.846133e-04 & 2.01& 7.882465e-05 & 1.99& 8.261358e-07 & 2.97& 3.225663e-07 & 2.99& 3.765641e-08 & 2.01\\ 
0.015625 & 1 & 164737& 4.606087e-05 & 2.00& 1.974435e-05 & 2.00& 1.046304e-07 & 2.98& 4.050395e-08 & 2.99& 9.359506e-09 & 2.01\\ 
\midrule
0.25 & 2 & 881& 4.901561e-03 & --& 6.467589e-04 & --& 7.339555e-05 & --& 1.163225e-05 & --& 3.628231e-07 & --\\ 
0.125 & 2 & 3553& 5.784949e-04 & 3.08& 8.006862e-05 & 3.01& 4.193709e-06 & 4.13& 7.971323e-07 & 3.87& 4.640113e-08 & 2.97\\ 
0.0625 & 2 & 14273& 6.973436e-05 & 3.05& 9.896615e-06 & 3.02& 2.512826e-07 & 4.06& 5.200708e-08 & 3.94& 5.795639e-09 & 3.00\\ 
0.03125 & 2 & 57217& 8.605670e-06 & 3.02& 1.246030e-06 & 2.99& 1.554178e-08 & 4.02& 3.388334e-09 & 3.94& 7.286071e-10 & 2.99\\ 
0.015625 & 2 & 229121& 1.070946e-06 & 3.01& 1.570128e-07 & 2.99& 9.699535e-10 & 4.00& 2.175903e-10 & 3.96& 9.142654e-11 & 2.99\\ 
\midrule
\multicolumn{13}{c}{$ \nu = 1e-06 $} \\ 
\midrule
0.25 & 0 & 393& 3.222194e-02 & --& 2.358650e-02 & --& 3.157105e-03 & --& 1.079341e-03 & --& 8.353185e-09 & --\\ 
0.125 & 0 & 1569& 1.568367e-02 & 1.04& 1.176665e-02 & 1.00& 8.130095e-04 & 1.96& 3.173216e-04 & 1.77& 3.715999e-09 & 1.17\\ 
0.0625 & 0 & 6273& 7.890606e-03 & 0.99& 5.860134e-03 & 1.01& 2.087535e-04 & 1.96& 9.097616e-05 & 1.80& 1.359228e-09 & 1.45\\ 
0.03125 & 0 & 25089& 3.978745e-03 & 0.99& 2.918938e-03 & 1.01& 5.300457e-05 & 1.98& 2.430899e-05 & 1.90& 4.747372e-10 & 1.52\\ 
0.015625 & 0 & 100353& 1.999853e-03 & 0.99& 1.456954e-03 & 1.00& 1.335882e-05 & 1.99& 6.269385e-06 & 1.96& 1.753818e-10 & 1.44\\ 
\midrule
0.25 & 1 & 637& 1.261567e-02 & --& 4.381984e-03 & --& 4.005323e-04 & --& 1.423174e-04 & --& 2.270947e-09 & --\\ 
0.125 & 1 & 2561& 3.012656e-03 & 2.07& 1.239913e-03 & 1.82& 5.094272e-05 & 2.97& 2.011596e-05 & 2.82& 6.069516e-10 & 1.90\\ 
0.0625 & 1 & 10273& 7.423158e-04 & 2.02& 3.140934e-04 & 1.98& 6.487856e-06 & 2.97& 2.558775e-06 & 2.97& 1.515679e-10 & 2.00\\ 
0.03125 & 1 & 41153& 1.846133e-04 & 2.01& 7.882465e-05 & 1.99& 8.261358e-07 & 2.97& 3.225663e-07 & 2.99& 3.765642e-11 & 2.01\\ 
0.015625 & 1 & 164737& 4.606087e-05 & 2.00& 1.974435e-05 & 2.00& 1.046304e-07 & 2.98& 4.050395e-08 & 2.99& 9.359491e-12 & 2.01\\ 
\midrule
0.25 & 2 & 881& 4.901561e-03 & --& 6.467589e-04 & --& 7.339556e-05 & --& 1.163225e-05 & --& 3.628230e-10 & --\\ 
0.125 & 2 & 3553& 5.784949e-04 & 3.08& 8.006862e-05 & 3.01& 4.193709e-06 & 4.13& 7.971323e-07 & 3.87& 4.640111e-11 & 2.97\\ 
0.0625 & 2 & 14273& 6.973438e-05 & 3.05& 9.896615e-06 & 3.02& 2.512827e-07 & 4.06& 5.200708e-08 & 3.94& 5.795651e-12 & 3.00\\ 
0.03125 & 2 & 57217& 8.605698e-06 & 3.02& 1.246029e-06 & 2.99& 1.554182e-08 & 4.02& 3.388331e-09 & 3.94& 7.285984e-13 & 2.99\\ 
0.015625 & 2 & 229121& 1.072212e-06 & 3.00& 1.570176e-07 & 2.99& 9.707280e-10 & 4.00& 2.176095e-10 & 3.96& 9.160016e-14 & 2.99\\ 
\midrule
\multicolumn{13}{c}{$ \nu = 1e-09 $} \\ 
\midrule
0.25 & 0 & 393& 3.222197e-02 & --& 2.358650e-02 & --& 3.157101e-03 & --& 1.079343e-03 & --& 8.353126e-12 & --\\ 
0.125 & 0 & 1569& 1.568351e-02 & 1.04& 1.176655e-02 & 1.00& 8.128663e-04 & 1.96& 3.172635e-04 & 1.77& 3.716196e-12 & 1.17\\ 
0.0625 & 0 & 6273& 7.890564e-03 & 0.99& 5.860114e-03 & 1.01& 2.086862e-04 & 1.96& 9.094945e-05 & 1.80& 1.359335e-12 & 1.45\\ 
0.03125 & 0 & 25089& 3.978735e-03 & 0.99& 2.918933e-03 & 1.01& 5.297332e-05 & 1.98& 2.429646e-05 & 1.90& 4.747921e-13 & 1.52\\ 
0.015625 & 0 & 100353& 1.999850e-03 & 0.99& 1.456953e-03 & 1.00& 1.334166e-05 & 1.99& 6.262585e-06 & 1.96& 1.754138e-13 & 1.44\\ 
\midrule
0.25 & 1 & 637& 1.261571e-02 & --& 4.381978e-03 & --& 4.005307e-04 & --& 1.423153e-04 & --& 2.270990e-12 & --\\ 
0.125 & 1 & 2561& 3.012621e-03 & 2.07& 1.239912e-03 & 1.82& 5.093999e-05 & 2.98& 2.011354e-05 & 2.82& 6.069569e-13 & 1.90\\ 
0.0625 & 1 & 10273& 7.422844e-04 & 2.02& 3.140926e-04 & 1.98& 6.486999e-06 & 2.97& 2.558201e-06 & 2.97& 1.515955e-13 & 2.00\\ 
0.03125 & 1 & 41153& 1.849331e-04 & 2.00& 7.882788e-05 & 1.99& 8.264979e-07 & 2.97& 3.225103e-07 & 2.99& 3.774118e-14 & 2.01\\ 
0.015625 & 1 & 164737& 4.819223e-05 & 1.94& 1.975600e-05 & 2.00& 1.062163e-07 & 2.96& 4.051995e-08 & 2.99& 9.917517e-15 & 1.93\\ 
\midrule
0.25 & 2 & 881& 4.901606e-03 & --& 6.467540e-04 & --& 7.339583e-05 & --& 1.163162e-05 & --& 3.628306e-13 & --\\ 
0.125 & 2 & 3553& 5.784848e-04 & 3.08& 8.006882e-05 & 3.01& 4.193606e-06 & 4.13& 7.971120e-07 & 3.87& 4.645136e-14 & 2.97\\ 
0.0625 & 2 & 14273& 7.017755e-05 & 3.04& 9.899016e-06 & 3.02& 2.526041e-07 & 4.05& 5.206829e-08 & 3.94& 6.384613e-15 & 2.86\\ 
0.03125 & 2 & 57217& 1.670814e-05 & 2.07& 1.294704e-06 & 2.93& 2.775679e-08 & 3.19& 3.717234e-09 & 3.81& 3.168766e-15 & 1.01\\ 
0.015625 & 2 & 229121& 5.230319e-05 & -1.65& 1.214343e-06 & 0.09& 3.896141e-08 & -0.49& 1.919767e-09 & 0.95& 5.651455e-15 & -0.83\\ 
\bottomrule
\end{tabular}
  \end{adjustbox}
  \caption{Numerical results for the Botti--Massa method of Section~\ref{sec:applications:botti-massa}.\label{tab:botti-massa}}
\end{table}

\begin{table}
  \begin{adjustbox}{width=1\textwidth}
    \begin{tabular}{ccccccccccccc}
  \toprule
  $h$
  & $k$
  & System size
  & $\| \underline{u}_h - \underline{I}_{U,h} u \|_{1,h}$ & OCV
  & $\| \nabla_h (r_h \underline{u}_h - u) \|_{L^2(\Omega)^{d\times d}}$ & OCV
  & $\| u_h - I_{U,h} u \|_{L^2(\Omega)^d}$ & OCV
  & $\| r_h \underline{u}_h - u \|_{L^2(\Omega)^d}$ & OCV
  & $\| p_h - I_{P,h} p \|_{L^2(\Omega)}$ & OCV
  \\
  \midrule
\multicolumn{13}{c}{$ \nu = 1 $} \\ 
\midrule
0.25 & 0 & 301& 5.091366e-02 & --& 3.763716e-02 & --& 5.452321e-03 & --& 2.192226e-03 & --& 1.057235e-02 & --\\ 
0.125 & 0 & 1217& 2.664248e-02 & 0.93& 1.953002e-02 & 0.95& 1.438660e-03 & 1.92& 6.349527e-04 & 1.79& 5.209898e-03 & 1.02\\ 
0.0625 & 0 & 4897& 1.371812e-02 & 0.96& 9.874165e-03 & 0.98& 3.749484e-04 & 1.94& 1.811691e-04 & 1.81& 1.987649e-03 & 1.39\\ 
0.03125 & 0 & 19649& 6.961800e-03 & 0.98& 4.949384e-03 & 1.00& 9.571999e-05 & 1.97& 4.831682e-05 & 1.91& 6.963471e-04 & 1.51\\ 
0.015625 & 0 & 78721& 3.504377e-03 & 0.99& 2.476364e-03 & 1.00& 2.416339e-05 & 1.99& 1.243355e-05 & 1.96& 2.562386e-04 & 1.44\\ 
\midrule
0.25 & 1 & 545& 1.260873e-02 & --& 5.067568e-03 & --& 4.695794e-04 & --& 1.661224e-04 & --& 2.574142e-03 & --\\ 
0.125 & 1 & 2209& 3.495394e-03 & 1.85& 1.437499e-03 & 1.82& 6.391980e-05 & 2.88& 2.394042e-05 & 2.79& 7.023496e-04 & 1.87\\ 
0.0625 & 1 & 8897& 9.150857e-04 & 1.93& 3.657393e-04 & 1.97& 8.347818e-06 & 2.94& 3.061214e-06 & 2.97& 1.747567e-04 & 2.01\\ 
0.03125 & 1 & 35713& 2.353234e-04 & 1.96& 9.211973e-05 & 1.99& 1.076894e-06 & 2.95& 3.867520e-07 & 2.98& 4.309645e-05 & 2.02\\ 
0.015625 & 1 & 143105& 5.983437e-05 & 1.98& 2.312937e-05 & 1.99& 1.373785e-07 & 2.97& 4.860589e-08 & 2.99& 1.063320e-05 & 2.02\\ 
\midrule
0.25 & 2 & 789& 1.989339e-03 & --& 7.215444e-04 & --& 4.092264e-05 & --& 1.437024e-05 & --& 5.024945e-04 & --\\ 
0.125 & 2 & 3201& 1.714420e-04 & 3.54& 9.153136e-05 & 2.98& 1.755080e-06 & 4.54& 1.012735e-06 & 3.83& 6.278277e-05 & 3.00\\ 
0.0625 & 2 & 12897& 1.816462e-05 & 3.24& 1.149733e-05 & 2.99& 9.536951e-08 & 4.20& 6.728081e-08 & 3.91& 7.930655e-06 & 2.98\\ 
0.03125 & 2 & 51777& 2.172452e-06 & 3.06& 1.459545e-06 & 2.98& 5.795572e-09 & 4.04& 4.417584e-09 & 3.93& 1.011191e-06 & 2.97\\ 
0.015625 & 2 & 207489& 2.715882e-07 & 3.00& 1.846122e-07 & 2.98& 3.642384e-10 & 3.99& 2.845126e-10 & 3.96& 1.280167e-07 & 2.98\\ 
\midrule
\multicolumn{13}{c}{$ \nu = 0.001 $} \\ 
\midrule
0.25 & 0 & 301& 5.091366e-02 & --& 3.763716e-02 & --& 5.452321e-03 & --& 2.192226e-03 & --& 1.057235e-05 & --\\ 
0.125 & 0 & 1217& 2.664248e-02 & 0.93& 1.953002e-02 & 0.95& 1.438660e-03 & 1.92& 6.349527e-04 & 1.79& 5.209898e-06 & 1.02\\ 
0.0625 & 0 & 4897& 1.371812e-02 & 0.96& 9.874165e-03 & 0.98& 3.749484e-04 & 1.94& 1.811691e-04 & 1.81& 1.987649e-06 & 1.39\\ 
0.03125 & 0 & 19649& 6.961800e-03 & 0.98& 4.949384e-03 & 1.00& 9.571999e-05 & 1.97& 4.831682e-05 & 1.91& 6.963471e-07 & 1.51\\ 
0.015625 & 0 & 78721& 3.504377e-03 & 0.99& 2.476364e-03 & 1.00& 2.416339e-05 & 1.99& 1.243355e-05 & 1.96& 2.562386e-07 & 1.44\\ 
\midrule
0.25 & 1 & 545& 1.260873e-02 & --& 5.067568e-03 & --& 4.695794e-04 & --& 1.661224e-04 & --& 2.574142e-06 & --\\ 
0.125 & 1 & 2209& 3.495394e-03 & 1.85& 1.437499e-03 & 1.82& 6.391980e-05 & 2.88& 2.394042e-05 & 2.79& 7.023496e-07 & 1.87\\ 
0.0625 & 1 & 8897& 9.150857e-04 & 1.93& 3.657393e-04 & 1.97& 8.347818e-06 & 2.94& 3.061214e-06 & 2.97& 1.747567e-07 & 2.01\\ 
0.03125 & 1 & 35713& 2.353234e-04 & 1.96& 9.211973e-05 & 1.99& 1.076894e-06 & 2.95& 3.867520e-07 & 2.98& 4.309645e-08 & 2.02\\ 
0.015625 & 1 & 143105& 5.983437e-05 & 1.98& 2.312937e-05 & 1.99& 1.373785e-07 & 2.97& 4.860589e-08 & 2.99& 1.063320e-08 & 2.02\\ 
\midrule
0.25 & 2 & 789& 1.989339e-03 & --& 7.215444e-04 & --& 4.092264e-05 & --& 1.437024e-05 & --& 5.024945e-07 & --\\ 
0.125 & 2 & 3201& 1.714420e-04 & 3.54& 9.153136e-05 & 2.98& 1.755080e-06 & 4.54& 1.012735e-06 & 3.83& 6.278277e-08 & 3.00\\ 
0.0625 & 2 & 12897& 1.816462e-05 & 3.24& 1.149733e-05 & 2.99& 9.536951e-08 & 4.20& 6.728081e-08 & 3.91& 7.930655e-09 & 2.98\\ 
0.03125 & 2 & 51777& 2.172452e-06 & 3.06& 1.459545e-06 & 2.98& 5.795572e-09 & 4.04& 4.417584e-09 & 3.93& 1.011191e-09 & 2.97\\ 
0.015625 & 2 & 207489& 2.715882e-07 & 3.00& 1.846122e-07 & 2.98& 3.642384e-10 & 3.99& 2.845126e-10 & 3.96& 1.280167e-10 & 2.98\\ 
\midrule
\multicolumn{13}{c}{$ \nu = 1e-06 $} \\ 
\midrule
0.25 & 0 & 301& 5.091366e-02 & --& 3.763716e-02 & --& 5.452321e-03 & --& 2.192226e-03 & --& 1.057235e-08 & --\\ 
0.125 & 0 & 1217& 2.664248e-02 & 0.93& 1.953002e-02 & 0.95& 1.438660e-03 & 1.92& 6.349527e-04 & 1.79& 5.209898e-09 & 1.02\\ 
0.0625 & 0 & 4897& 1.371812e-02 & 0.96& 9.874165e-03 & 0.98& 3.749484e-04 & 1.94& 1.811691e-04 & 1.81& 1.987649e-09 & 1.39\\ 
0.03125 & 0 & 19649& 6.961800e-03 & 0.98& 4.949384e-03 & 1.00& 9.571999e-05 & 1.97& 4.831682e-05 & 1.91& 6.963471e-10 & 1.51\\ 
0.015625 & 0 & 78721& 3.504377e-03 & 0.99& 2.476364e-03 & 1.00& 2.416339e-05 & 1.99& 1.243355e-05 & 1.96& 2.562386e-10 & 1.44\\ 
\midrule
0.25 & 1 & 545& 1.260873e-02 & --& 5.067568e-03 & --& 4.695794e-04 & --& 1.661224e-04 & --& 2.574142e-09 & --\\ 
0.125 & 1 & 2209& 3.495394e-03 & 1.85& 1.437499e-03 & 1.82& 6.391980e-05 & 2.88& 2.394042e-05 & 2.79& 7.023496e-10 & 1.87\\ 
0.0625 & 1 & 8897& 9.150857e-04 & 1.93& 3.657393e-04 & 1.97& 8.347818e-06 & 2.94& 3.061214e-06 & 2.97& 1.747568e-10 & 2.01\\ 
0.03125 & 1 & 35713& 2.353234e-04 & 1.96& 9.211973e-05 & 1.99& 1.076894e-06 & 2.95& 3.867520e-07 & 2.98& 4.309646e-11 & 2.02\\ 
0.015625 & 1 & 143105& 5.983437e-05 & 1.98& 2.312937e-05 & 1.99& 1.373785e-07 & 2.97& 4.860590e-08 & 2.99& 1.063319e-11 & 2.02\\ 
\midrule
0.25 & 2 & 789& 1.989339e-03 & --& 7.215444e-04 & --& 4.092264e-05 & --& 1.437023e-05 & --& 5.024945e-10 & --\\ 
0.125 & 2 & 3201& 1.714420e-04 & 3.54& 9.153136e-05 & 2.98& 1.755080e-06 & 4.54& 1.012735e-06 & 3.83& 6.278276e-11 & 3.00\\ 
0.0625 & 2 & 12897& 1.816462e-05 & 3.24& 1.149733e-05 & 2.99& 9.536949e-08 & 4.20& 6.728081e-08 & 3.91& 7.930667e-12 & 2.98\\ 
0.03125 & 2 & 51777& 2.172451e-06 & 3.06& 1.459544e-06 & 2.98& 5.795567e-09 & 4.04& 4.417579e-09 & 3.93& 1.011182e-12 & 2.97\\ 
0.015625 & 2 & 207489& 2.717105e-07 & 3.00& 1.846151e-07 & 2.98& 3.643416e-10 & 3.99& 2.845273e-10 & 3.96& 1.281173e-13 & 2.98\\ 
\midrule
\multicolumn{13}{c}{$ \nu = 1e-09 $} \\ 
\midrule
0.25 & 0 & 301& 5.091363e-02 & --& 3.763719e-02 & --& 5.452310e-03 & --& 2.192225e-03 & --& 1.057255e-11 & --\\ 
0.125 & 0 & 1217& 2.664234e-02 & 0.93& 1.952987e-02 & 0.95& 1.438559e-03 & 1.92& 6.348927e-04 & 1.79& 5.210224e-12 & 1.02\\ 
0.0625 & 0 & 4897& 1.371811e-02 & 0.96& 9.874163e-03 & 0.98& 3.749366e-04 & 1.94& 1.811630e-04 & 1.81& 1.987694e-12 & 1.39\\ 
0.03125 & 0 & 19649& 6.961792e-03 & 0.98& 4.949376e-03 & 1.00& 9.569553e-05 & 1.97& 4.830410e-05 & 1.91& 6.963895e-13 & 1.51\\ 
0.015625 & 0 & 78721& 3.504375e-03 & 0.99& 2.476363e-03 & 1.00& 2.415342e-05 & 1.99& 1.242844e-05 & 1.96& 2.562572e-13 & 1.44\\ 
\midrule
0.25 & 1 & 545& 1.260861e-02 & --& 5.067566e-03 & --& 4.695673e-04 & --& 1.661151e-04 & --& 2.574172e-12 & --\\ 
0.125 & 1 & 2209& 3.495056e-03 & 1.85& 1.437494e-03 & 1.82& 6.390885e-05 & 2.88& 2.393561e-05 & 2.79& 7.023477e-13 & 1.87\\ 
0.0625 & 1 & 8897& 9.149967e-04 & 1.93& 3.657387e-04 & 1.97& 8.346386e-06 & 2.94& 3.060646e-06 & 2.97& 1.747808e-13 & 2.01\\ 
0.03125 & 1 & 35713& 2.353216e-04 & 1.96& 9.212023e-05 & 1.99& 1.076851e-06 & 2.95& 3.867169e-07 & 2.98& 4.316424e-14 & 2.02\\ 
0.015625 & 1 & 143105& 5.989604e-05 & 1.97& 2.313613e-05 & 1.99& 1.374942e-07 & 2.97& 4.863242e-08 & 2.99& 1.107310e-14 & 1.96\\ 
\midrule
0.25 & 2 & 789& 1.989319e-03 & --& 7.215433e-04 & --& 4.092234e-05 & --& 1.437001e-05 & --& 5.025068e-13 & --\\ 
0.125 & 2 & 3201& 1.714564e-04 & 3.54& 9.153210e-05 & 2.98& 1.755206e-06 & 4.54& 1.012769e-06 & 3.83& 6.282424e-14 & 3.00\\ 
0.0625 & 2 & 12897& 1.826850e-05 & 3.23& 1.149849e-05 & 2.99& 9.575826e-08 & 4.20& 6.730105e-08 & 3.91& 8.367764e-15 & 2.91\\ 
0.03125 & 2 & 51777& 3.348559e-06 & 2.45& 1.494559e-06 & 2.94& 8.286685e-09 & 3.53& 4.796850e-09 & 3.81& 3.193360e-15 & 1.39\\ 
0.015625 & 2 & 207489& 8.084852e-06 & -1.27& 9.689466e-07 & 0.63& 8.450114e-09 & -0.03& 2.358977e-09 & 1.02& 5.357670e-15 & -0.75\\ 
\bottomrule
\end{tabular}
  \end{adjustbox}
  \caption{Numerical results for the new method with Raviart--Thomas--N\'ed\'elec velocities at elements of Section~\ref{sec:applications:rtn-pk-pk-pk}.\label{tab:rtn-pk-pk-pk}}
\end{table}

%% \begin{table}
%%   \begin{adjustbox}{width=1\textwidth}
%%     \input{dat/pkpo-pk-pk-pk.tex}
%%   \end{adjustbox}
%%   \caption{Numerical results for the variant of Remark \ref{rem:pkpo-pk-pk-pk}.\label{tab:pkpo-pk-pk-pk}}
%% \end{table}

\begin{table}
  \begin{adjustbox}{width=1\textwidth}
    \begin{tabular}{ccccccccccccc}
  \toprule
  $h$
  & $k$
  & System size
  & $\| \underline{u}_h - \underline{I}_{U,h} u \|_{1,h}$ & OCV
  & $\| \nabla_h (r_h \underline{u}_h - u) \|_{L^2(\Omega)^{d\times d}}$ & OCV
  & $\| u_h - I_{U,h} u \|_{L^2(\Omega)^d}$ & OCV
  & $\| r_h \underline{u}_h - u \|_{L^2(\Omega)^d}$ & OCV
  & $\| p_h - I_{P,h} p \|_{L^2(\Omega)}$ & OCV
  \\
  \midrule
\multicolumn{13}{c}{``tri'' mesh family} \\ 
\midrule
0.25 & 0 & 301& 2.871509e-01 & --& 1.388025e-01 & --& 2.593695e-02 & --& 1.600589e-02 & --& 1.181657e-01 & --\\ 
0.125 & 0 & 1217& 1.777113e-01 & 0.69& 8.032856e-02 & 0.79& 8.742641e-03 & 1.57& 6.711338e-03 & 1.25& 7.136302e-02 & 0.73\\ 
0.0625 & 0 & 4897& 1.056728e-01 & 0.75& 4.066457e-02 & 0.98& 2.754628e-03 & 1.67& 2.345353e-03 & 1.52& 3.546204e-02 & 1.01\\ 
0.03125 & 0 & 19649& 5.919433e-02 & 0.84& 1.909566e-02 & 1.09& 7.961755e-04 & 1.79& 7.104760e-04 & 1.72& 1.499777e-02 & 1.24\\ 
0.015625 & 0 & 78721& 3.155741e-02 & 0.91& 8.815891e-03 & 1.12& 2.154894e-04 & 1.89& 1.964997e-04 & 1.85& 5.693412e-03 & 1.40\\ 
\midrule
0.25 & 1 & 545& 1.094570e-01 & --& 1.442178e-02 & --& 3.566780e-03 & --& 5.455960e-04 & --& 9.971271e-03 & --\\ 
0.125 & 1 & 2209& 2.864697e-02 & 1.93& 4.029509e-03 & 1.84& 4.708069e-04 & 2.92& 7.396471e-05 & 2.88& 2.444760e-03 & 2.03\\ 
0.0625 & 1 & 8897& 7.259557e-03 & 1.98& 1.048116e-03 & 1.94& 5.981108e-05 & 2.98& 9.481457e-06 & 2.96& 5.908448e-04 & 2.05\\ 
0.03125 & 1 & 35713& 1.824671e-03 & 1.99& 2.671313e-04 & 1.97& 7.525256e-06 & 2.99& 1.202422e-06 & 2.98& 1.452048e-04 & 2.02\\ 
0.015625 & 1 & 143105& 4.572920e-04 & 2.00& 6.745664e-05 & 1.99& 9.435208e-07 & 3.00& 1.516119e-07 & 2.99& 3.603847e-05 & 2.01\\ 
\midrule
0.25 & 2 & 789& 2.229905e-02 & --& 1.419633e-03 & --& 4.267495e-04 & --& 3.575930e-05 & --& 1.104445e-03 & --\\ 
0.125 & 2 & 3201& 2.913007e-03 & 2.94& 2.057320e-04 & 2.79& 2.782726e-05 & 3.94& 2.784274e-06 & 3.68& 1.603266e-04 & 2.78\\ 
0.0625 & 2 & 12897& 3.697032e-04 & 2.98& 2.803240e-05 & 2.88& 1.766127e-06 & 3.98& 1.983784e-07 & 3.81& 2.182177e-05 & 2.88\\ 
0.03125 & 2 & 51777& 4.652907e-05 & 2.99& 3.679046e-06 & 2.93& 1.112016e-07 & 3.99& 1.332765e-08 & 3.90& 2.855088e-06 & 2.93\\ 
0.015625 & 2 & 207489& 5.835491e-06 & 3.00& 4.719275e-07 & 2.96& 6.976227e-09 & 3.99& 8.650121e-10 & 3.95& 3.653625e-07 & 2.97\\ 
\midrule
\multicolumn{13}{c}{``cart'' mesh family} \\ 
\midrule
0.141421 & 0 & 681& 7.431549e-02 & --& 5.434746e-02 & --& 8.757928e-03 & --& 6.594299e-03 & --& 6.380900e-02 & --\\ 
0.0707107 & 0 & 2761& 3.107123e-02 & 1.26& 2.721935e-02 & 1.00& 2.655586e-03 & 1.72& 2.306187e-03 & 1.52& 3.273492e-02 & 0.96\\ 
0.0353553 & 0 & 11121& 1.241095e-02 & 1.32& 1.190420e-02 & 1.19& 7.555894e-04 & 1.81& 7.039030e-04 & 1.71& 1.413110e-02 & 1.21\\ 
0.0176777 & 0 & 44641& 4.881669e-03 & 1.35& 4.782175e-03 & 1.32& 2.034841e-04 & 1.89& 1.960254e-04 & 1.84& 5.418242e-03 & 1.38\\ 
\midrule
0.141421 & 1 & 1261& 2.231652e-02 & --& 2.071433e-03 & --& 5.267229e-04 & --& 3.493709e-05 & --& 7.189385e-04 & --\\ 
0.0707107 & 1 & 5121& 5.649050e-03 & 1.98& 4.834399e-04 & 2.10& 6.658733e-05 & 2.98& 3.655879e-06 & 3.26& 9.922670e-05 & 2.86\\ 
0.0353553 & 1 & 20641& 1.418751e-03 & 1.99& 1.153138e-04 & 2.07& 8.350360e-06 & 3.00& 3.970154e-07 & 3.20& 1.301183e-05 & 2.93\\ 
0.0176777 & 1 & 82881& 3.553536e-04 & 2.00& 2.813487e-05 & 2.04& 1.044885e-06 & 3.00& 4.589223e-08 & 3.11& 1.674172e-06 & 2.96\\ 
\midrule
0.141421 & 2 & 1841& 2.306263e-03 & --& 2.086652e-04 & --& 2.840043e-05 & --& 2.186706e-06 & --& 1.069296e-04 & --\\ 
0.0707107 & 2 & 7481& 2.916838e-04 & 2.98& 2.723758e-05 & 2.94& 1.793080e-06 & 3.99& 1.453602e-07 & 3.91& 1.532567e-05 & 2.80\\ 
0.0353553 & 2 & 30161& 3.672161e-05 & 2.99& 3.453053e-06 & 2.98& 1.127570e-07 & 3.99& 9.301390e-09 & 3.97& 2.018631e-06 & 2.92\\ 
0.0176777 & 2 & 121121& 4.610043e-06 & 2.99& 4.335188e-07 & 2.99& 7.073963e-09 & 3.99& 5.863440e-10 & 3.99& 2.574813e-07 & 2.97\\ 
\midrule
\multicolumn{13}{c}{``locref'' mesh family} \\ 
\midrule
0.176777 & 0 & 1121& 9.662929e-02 & --& 6.562072e-02 & --& 1.260023e-02 & --& 8.866492e-03 & --& 7.575972e-02 & --\\ 
0.0883883 & 0 & 4481& 4.099609e-02 & 1.24& 3.436386e-02 & 0.93& 3.906079e-03 & 1.69& 3.286328e-03 & 1.43& 4.145393e-02 & 0.87\\ 
0.0441942 & 0 & 17921& 1.652976e-02 & 1.31& 1.560495e-02 & 1.14& 1.135878e-03 & 1.78& 1.043778e-03 & 1.65& 1.880963e-02 & 1.14\\ 
0.0220971 & 0 & 71681& 6.483533e-03 & 1.35& 6.378802e-03 & 1.29& 3.108131e-04 & 1.87& 2.979813e-04 & 1.81& 7.448419e-03 & 1.34\\ 
\midrule
0.176777 & 1 & 2081& 3.419560e-02 & --& 2.922194e-03 & --& 1.009461e-03 & --& 6.113120e-05 & --& 1.309813e-03 & --\\ 
0.0883883 & 1 & 8321& 8.705068e-03 & 1.97& 6.775391e-04 & 2.11& 1.285386e-04 & 2.97& 6.530443e-06 & 3.23& 1.873541e-04 & 2.81\\ 
0.0441942 & 1 & 33281& 2.190150e-03 & 1.99& 1.594929e-04 & 2.09& 1.615289e-05 & 2.99& 6.990477e-07 & 3.22& 2.489209e-05 & 2.91\\ 
0.0220971 & 1 & 133121& 5.489024e-04 & 2.00& 3.859099e-05 & 2.05& 2.022588e-06 & 3.00& 7.936535e-08 & 3.14& 3.227847e-06 & 2.95\\ 
\midrule
0.176777 & 2 & 3041& 4.385363e-03 & --& 3.551913e-04 & --& 6.724034e-05 & --& 4.645085e-06 & --& 1.904914e-04 & --\\ 
0.0883883 & 2 & 12161& 5.582283e-04 & 2.97& 4.762003e-05 & 2.90& 4.278387e-06 & 3.97& 3.194758e-07 & 3.86& 2.847426e-05 & 2.74\\ 
0.0441942 & 2 & 48641& 7.038299e-05 & 2.99& 6.108618e-06 & 2.96& 2.695583e-07 & 3.99& 2.082572e-08 & 3.94& 3.835956e-06 & 2.89\\ 
0.0220971 & 2 & 194561& 8.841014e-06 & 2.99& 7.707985e-07 & 2.99& 1.692408e-08 & 3.99& 1.324258e-09 & 3.98& 4.942612e-07 & 2.96\\ 
\midrule
\multicolumn{13}{c}{``hexa'' mesh family} \\ 
\midrule
0.241412 & 0 & 1162& 5.770683e-02 & --& 6.001934e-02 & --& 8.695028e-03 & --& 7.650827e-03 & --& 7.183858e-02 & --\\ 
0.129713 & 0 & 4322& 2.732537e-02 & 1.20& 3.083595e-02 & 1.07& 3.271674e-03 & 1.57& 3.088796e-03 & 1.46& 3.427624e-02 & 1.19\\ 
0.0657364 & 0 & 16642& 1.135351e-02 & 1.29& 1.347297e-02 & 1.22& 1.012115e-03 & 1.73& 9.847495e-04 & 1.68& 1.426008e-02 & 1.29\\ 
0.0329799 & 0 & 65282& 4.499879e-03 & 1.34& 5.454410e-03 & 1.31& 2.820949e-04 & 1.85& 2.784537e-04 & 1.83& 5.489740e-03 & 1.38\\ 
\midrule
0.241412 & 1 & 2202& 1.590592e-02 & --& 4.250048e-03 & --& 4.983831e-04 & --& 2.051919e-04 & --& 3.726391e-03 & --\\ 
0.129713 & 1 & 8202& 4.682540e-03 & 1.97& 1.069133e-03 & 2.22& 7.943574e-05 & 2.96& 2.543882e-05 & 3.36& 7.854022e-04 & 2.51\\ 
0.0657364 & 1 & 31602& 1.239505e-03 & 1.96& 2.457061e-04 & 2.16& 1.069243e-05 & 2.95& 2.458947e-06 & 3.44& 1.487627e-04 & 2.45\\ 
0.0329799 & 1 & 124002& 3.171237e-04 & 1.98& 5.656809e-05 & 2.13& 1.373283e-06 & 2.98& 2.383934e-07 & 3.38& 2.744157e-05 & 2.45\\ 
\midrule
0.241412 & 2 & 3242& 2.485494e-03 & --& 4.271783e-04 & --& 4.614337e-05 & --& 9.897321e-06 & --& 2.615419e-04 & --\\ 
0.129713 & 2 & 12082& 4.180162e-04 & 2.87& 7.454427e-05 & 2.81& 4.305935e-06 & 3.82& 9.444987e-07 & 3.78& 4.017522e-05 & 3.02\\ 
0.0657364 & 2 & 46562& 5.918798e-05 & 2.88& 1.066472e-05 & 2.86& 3.168190e-07 & 3.84& 7.066557e-08 & 3.81& 5.338156e-06 & 2.97\\ 
0.0329799 & 2 & 182722& 7.815297e-06 & 2.94& 1.411474e-06 & 2.93& 2.122297e-08 & 3.92& 4.763964e-09 & 3.91& 6.820794e-07 & 2.98\\ 
\bottomrule
\end{tabular}
  \end{adjustbox}
  \caption{Numerical results for the polytopal method of Section~\ref{sec:applications:polytopal}.\label{tab:polygonal}}
\end{table}

%------------------------------------------------------------------------------%

\section*{Acknowledgements}

Funded by the European Union (ERC Synergy, NEMESIS, project number 101115663).
Views and opinions expressed are however those of the authors only and do not necessarily reflect those of the European Union or the European Research Council Executive Agency. Neither the European Union nor the granting authority can be held responsible for them.

%------------------------------------------------------------------------------%

\printbibliography

\end{document}